\theoremstyle{definition}
\theoremstyle{Remark}
\newcommand{\mJ}{\mathcal{J}}
\newcommand{\st}{\cH}
\newcommand{\cinvmB}{[\check{\bc}_1^{-1}m_B]}
\newcommand{\cdinvmB}{[\check{\bc}_2^{-1}m_B]}
\newcommand{\cinvderladjdmB}{[\check{\bc}_1^{-1}\partial_{\la_2}^{j_2}m_B]}
\newcommand{\cdinvderlaujumB}{[\check{\bc}_2^{-1}\partial_{\la_1}^{j_1}m_B]}
\newcommand{\cdinvderzetaujumB}{[\check{\bc}_2^{-1}\partial_{\zeta_1}^{j_1}m_B]}
\newcommand{\mbc}{[\check\bc^{-1}m]}
\newcommand{\mbcuu}{\mathbf{m}^{1,1}}
\newcommand{\MAd}{M_{A_2}}
\newcommand{\cinvMAd}{[\check{\bc}_1^{-1}M_{A_2}]}
\newcommand{\cinvMRd}{[\check\bc^{-1}M_{R_2}]}
\newcommand{\MRd}{M_{R_2}}
\newcommand{\Mpd}{M_{\vp_2}}
\newcommand{\Nu}{L_{1}}
\newcommand{\Nou}{L_{\om_1}}
\newcommand{\Nod}{N_{\om_2}}
\newcommand{\phipuA}{\phi_{1A_2}^p}
\newcommand{\taupuuA}{\tau_{1A_2}^{p,1}}
\newcommand{\taupduA}{\tau_{1A_2}^{p,2}}
\newcommand{\tauptuA}{\tau_{1A_2}^{p,3}}
\newcommand{\phipuu}{\phi_{11}^p}
\newcommand{\taupuuu}{\tau_{11}^{p,1}}
\newcommand{\taupduu}{\tau_{11}^{p,2}}
\newcommand{\tauptuu}{\tau_{11}^{p,3}}
\newcommand{\taupquu}{\tau_{11}^{p,4}}
\renewcommand{\textwidth}{6.0in}                        
\renewcommand{\emph}{\normalem}
\theoremstyle{plain}
\newtheorem{theorem}{Theorem}[section]
\newtheorem*{theorem*}{Theorem}
\newtheorem{lemma}[theorem]{Lemma}
\newtheorem{corollary}[theorem]{Corollary}
\newtheorem{proposition}[theorem]{Proposition}
\theoremstyle{remark}
\newtheorem{remark}[theorem]{Remark}
\numberwithin{equation}{section}
\theoremstyle{definition}
\newtheorem{definition}[theorem]{Definition}
\numberwithin{equation}{section}
\theoremstyle{Basic assumptions}
\newtheorem{Basic assumptions}[theorem]{Basic assumptions}
\theoremstyle{notation}
\newcommand\quant{\advance\quantno by1
                      \ifnum\quantno=1\qquad\else\quad\fi\forall }
\newcommand\itemno[1]{(\romannumeral #1)}
\renewcommand\Re{\operatorname{\mathrm{Re}}}
\renewcommand\Im{\operatorname{\mathrm{Im}}}
\newcommand\rest[1]{\kern-.1em
          \lower.5ex\hbox{$\scriptstyle #1$}\kern.05em}
\renewcommand\mod[1]{\vert{#1}\vert}
\newcommand\bigmod[1]{\bigl\vert{#1}\bigr|}
\newcommand\Bigmod[1]{\Bigl\vert{#1}\Bigr|}
\newcommand\norm[2]{{\Vert{#1}\Vert_{#2}}}
\newcommand\bignorm[2]{\,{\big\Vert{#1}\big\Vert_{#2}}}
\newcommand\bignormto[3]{{\big\Vert{#1}\big\Vert^{#3}_{#2}}}
\newcommand\Bignorm[2]{\left.{\Big\Vert{#1}\Big\Vert_{#2}}\right.}
\newcommand\Bignormto[3]{{\Big\Vert{#1}\Big\Vert^{#3}_{#2}}}
\newcommand\bigopnorm[2]{\bigl|\!\bigl|\!\bigl| {#1}
\bigr|\!\bigr|\!\bigr|_{#2}}
\newcommand\wrt{\,\text{\rm d}}
\newcommand\bC{\mathbf{C}}
\newcommand\bW{\mathbf{W}}
\newcommand\BA{\mathbb{A}}
\newcommand\BC{\mathbb{C}}
\newcommand\BG{\mathbb{G}}
\newcommand\BK{\mathbb{K}}
\newcommand\BN{\mathbb{N}} \newcommand\OBN{\overline{\mathbb{N}}}
\newcommand\BR{\mathbb{R}}
\newcommand\BX{\mathbb{X}}
\newcommand\cA{\mathcal{A}}  
\newcommand\fra{\mathfrak{a}}
\newcommand\cB{\mathcal{B}}
\newcommand\cD{\mathcal{D}} 
\newcommand\cE{\mathcal{E}}
                               \newcommand\frg
\newcommand\cH{\mathcal{H}}
\newcommand\cJ{\mathcal{J}}
\newcommand\frk{\mathfrak{k}}
\newcommand\cL{\mathcal{L}}  
\newcommand\cM{\mathcal{M}}
\newcommand\frn{\mathfrak{n}}
\newcommand\cO{\mathcal{O}}  
\newcommand\cP{\mathcal{P}}  
\newcommand\frp{\mathfrak{p}}
\newcommand\cU{\mathcal{U}}
\newcommand\dist{{\rm dist}}
\newcommand\sign{{\rm sign}}
\newcommand\al{\alpha}
\newcommand\be{\beta}
    \newcommand\Ga{\Gamma}
\newcommand\de{\delta}
  \newcommand\vep{\varepsilon}
\newcommand\la{\lambda}   
\newcommand\om{\omega}    \newcommand\Om{\Omega}
\newcommand\vp{\varphi}
\newcommand\OV{\overline}
\newcommand\funnyk{k\hbox to 0pt{\hss\phantom{g}}}
\newcommand\lu[1]{L^1(#1)}
\newcommand\lp[1]{L^p(#1)}
\newcommand\laq[1]{L^q(#1)}
\newcommand\ld[1]{L^2(#1)}
\newcommand\lr[1]{L^r(#1)}
\newcommand\ly[1]{L^\infty(#1)}
\newcommand\Mar[2]{\cM ({#1};{#2})}
\newcommand\Marinfty[2]{\cM _\infty({#1};{#2})}
\newcommand\Horm[2]{\cH({#1};{#2})}
\newcommand\Horminfty[2]{\cH_\infty({#1};{#2})}
\newcommand\Cvp[1]{Cv_p(#1)}
\newcommand\bc{\mathbf{c}}
\newcommand\wt{\widetilde}
\newcommand\whH{\widehat{\phantom{G}}\hbox to 0pt{\hss $H$}}
\renewcommand\square{\rlap{\hbox{$\sqcup$}}{\hbox{$\sqcap$}}}
\renewcommand\endproof{\quad\phantom{a}\hfill\square\medskip\goodbreak}
\newcommand\emspace{\hbox to 6pt{\hss}}
\newcommand\ds{\displaystyle}
\newcommand\rmi{\hbox{\rm (i)}}
\newcommand\rmii{\hbox{\rm (ii)}}
\newcommand\rmiii{\hbox{\rm (iii)}}
\newcommand\rmiv{\hbox{\rm (iv)}}
\newcommand\rmv{\hbox{\rm (v)}}
\newcommand\dtt[1]{\,\frac{\mathrm {d} #1}{ #1}}
\newcommand\One{{\mathbf{1}}}
\newcommand\e{\mathrm{e}}
\newcommand\sft[1]{\wt{#1}}
\newcommand\Spp{\Sigma_0^{+}}
\newcommand\Sppp{\Sigma_s}
\newcommand\TWp{T_p}
\newcommand\TWu{T_{\mathbf{W}_1}}
\newcommand\kA{\kappa_A}
\newcommand\kR{\kappa_R}
\newcommand\ku{\kappa_1}
\newcommand\ko{\kappa_\om}
\newcommand\kAA{\kappa_{A_1A_2}}
\newcommand\kAR{\kappa_{A_1R_2}}
\newcommand\kRA{\kappa_{R_1A_2}}
\newcommand\kRR{\kappa_{R_1R_2}}
\newcommand\kuA{\kappa_{1A_2}}
\newcommand\kuR{\kappa_{1R_2}}
\newcommand\kuu{\kappa_{11}}
\newcommand\kou{\kappa_{\om_11}}
\newcommand\kuo{\kappa_{1\om_2}}
\newcommand\koo{\kappa_{\om_1\om_2}}
\newcommand\kop{\kappa_{\om_1\vp_2}}
\newcommand\alza[1]{\kern+.13em
          \raise.35ex\hbox{$\scriptstyle #1$}}
\newcommand\alzaprima[1]{\kern-.25em
          \raise.2ex\hbox{$\scriptstyle #1$}}
\newcommand\tre{\mathbf{3}}
\DeclareSymbolFont{EUEX}{U}{euex}{m}{n}
\DeclareSymbolFont{euexlargesymbols}{U}{euex}{m}{n}
\DeclareMathSymbol{\intop}{\mathop}{euexlargesymbols}{"52}
     \def\int{\intop\nolimits}
\DeclareSymbolFont{euexsymbols}     {U}{euex}{m}{n}
\title[Marcinkiewicz multipliers on products of symmetric spaces]{Marcinkiewicz-type
multipliers \\
on products of noncompact symmetric spaces}
\author{Stefano Meda and B\l a\.{z}ej Wr\'obel}
\address{Dipartimento di Matematica e Applicazioni, Universit\`{a} di Milano-Bicocca,
via R. Cozzi 53 I-20125, Milano, Italy}
\address{Instytut Matematyczny, Uniwersytet Wroc\l awski, pl. Grunwaldzki 2/4, 50-384 Wroc\l aw, Poland}
\email{stefano.meda@unimib.it}
\email{blazej.wrobel@math.uni.wroc.pl}
\subjclass[2010]{43A85, 43A32}
\keywords{Marcinkiewicz multiplier, noncompact symmetric space, transference}
\begin{document}

\begin{abstract}
In this paper we prove a Marcinkiewicz-type multiplier result for the
spherical Fourier transform on products of rank one noncompact symmetric spaces.
\end{abstract}

\maketitle
\numberwithin{equation}{section}
\section{Introduction}

A celebrated result of L.~H\"ormander \cite{Ho} states that if
$B$ is a bounded translation invariant operator on $\ld{\BR^n}$
and the Fourier transform $m_B$ of its convolution kernel
satisfies the following Mihlin type conditions
\begin{equation}
\mod{D^I m_B (\xi)}
\leq C \, \mod{\xi}^{-\mod{I}}
\quant \xi \in \BR^n \setminus \{0\}
\end{equation}
for all multi-indices $I$ of length $\leq [\!\![n/2]\!\!]+1$,
then $B$ extends to an operator bounded on $\lp{\BR^n}$ for all
$p$ in $(1,\infty)$, and of weak type~$1$.
The operator $B$ is usually referred to as the \emph{Fourier
multiplier operator} associated to the \emph{multiplier} $m_B$.

Typical operators to which H\"ormander's result applies are Calder\'on--Zygmund
singular integral operators, e.g. Riesz transforms, and spectral
multipliers of $-\Delta$ (the positive Laplacian on $\BR^n$) such as
its purely imaginary powers $(-\Delta)^{iu}$, for $u$ in $\BR$.
However, there are interesting operators to which H\"ormander's theorem does not apply.
A paradigmatic example is the multiplier associated to the double Hilbert transform
in $\BR^2$.  Indeed, such multiplier is ``singular'' on the co-ordinate axes, whereas
a multiplier satisfying H\"ormander conditions may be ``singular'' only at the origin.
Another interesting example is the multiplier
$
m(\xi_1,\xi_2)
= \bigmod{\xi_1}^{2iu} \, \bigmod{\xi_2}^{2iv}
$
in $\BR^{n_1}\times \BR^{n_2}$, where $u$ and $v$ are real numbers,
associated to the operator $(-\Delta_1)^{iu} \otimes (-\Delta_2)^{iv}$,
where $\Delta_1$ and $\Delta_2$ are the standard Laplacians on $\BR^{n_1}$ and $\BR^{n_2}$, respectively. 
Of course, a straightforward argument
shows that
$$
\bigopnorm{(-\Delta_1)^{iu} \otimes (-\Delta_2)^{iv}}{\lp{\BR^{n_1}\times \BR^{n_2}}}
\leq  \bigopnorm{(-\Delta_1)^{iu}}{\lp{\BR^{n_1}}} \,
\bigopnorm{(-\Delta_2)^{iv}}{\lp{\BR^{n_2}}}.
$$
Thus, though the multiplier $m$ above does not fall under the scope of
H\"ormander's multiplier theorem on the product space $\BR^{n_1}\times \BR^{n_2}$,
the operator norms on the right hand side may be estimated by using H\"ormander's theorem
on each of the factor spaces $\BR^{n_1}$ and $\BR^{n_2}$.  However, this argument
does not apply, for instance, to the slightly different multiplier
$
m(\xi_1,\xi_2)
= \big(\mod{\xi_1}^2+\mod{\xi_2}^2\big)^{iu} \, \bigmod{\xi_2}^{iv},
$
which, fortunately, falls under the scope of the celebrated Marcinkiewicz multiplier theorem.

The problem of extending the classical H\"ormander multiplier theorem to the setting
of symmetric spaces of the noncompact type has been considered by various authors
\cite{CS,ST,AL,A1,GMM,I1,I2,MV}, and may be phrased as follows (we refer the reader to Section~\ref{s: Background}
and the references therein for all unexplained notation and terminology).
Suppose that $\BX= \BG/\BK$ is a symmetric space of the noncompact type.
It is well known that if $B$ is a $\BG$-invariant bounded linear operator on $\ld{\BX}$, 
then there exists a $\BK$--bi-invariant tempered distribution $k_B$ on $\BG$ such that 
$Bf = f*k_B$ for all $f$ in $\ld{\BX}$ (see \cite[Prop.~1.7.1 and Ch.~6.1]{GV} for details).
We call $k_B$ the \emph{kernel} of $B$.
We denote its spherical Fourier transform~$\wt k_B$
by $m_B$ and call it the \emph{spherical multiplier} associated to $B$.
Clearly $\wt k_B$ is a bounded Weyl-invariant function on $\fra^*$. 

A well known result of J.L.~Clerc and E.M.~Stein \cite{CS} states that 
if $B$ is $\BG$-invariant bounded linear operator on $\lp{\BX}$ for some $p$ in $(1,\infty)$, then $m_B$
continues analytically to a bounded Weyl-invariant function in a tube $T_p$ over a suitable polyhedron in $\BC^\ell$, where 
$\ell$ is the real rank of $\BX$.
The best sufficient condition available in the literature in the case where $\BX$ has real rank one 
is due to A.D.~Ionescu \cite{I1}, who proved that if 
$p$ is in $(1,\infty)\setminus\{2\}$, $m$ is a bounded Weyl-invariant holomorphic function in $\TWp$ 
(in this case  $\TWp := \{\zeta \in \BC:  \bigmod{\Im \zeta} < \de(p) \, \mod{\rho}\}$,
where $\de(p) = \mod{2/p-1}$ and $\rho$ is half the sum of all positive roots with multiplicity)
and satisfies the following inequalities
$$
\bigmod{\partial^jm(\zeta)}
\leq C\, \Big(\min \big[\mod{\la-i\de(p)\rho},\mod{\la+i\de(p)\rho}\big]\Big)^{-j}
\quant \zeta \in \TWp,
$$
for $j=0,1,\ldots,N$, with $N$ large enough,
then the associated multiplier operator is bounded on $\lp{\BX}$ and, by interpolation
with the trivial $L^2$ result, on $\lr{\BX}$ for all $r$ such that $\bigmod{1/r-1/2}\leq \bigmod{1/p-1/2}$. 

Versions of this result in the higher rank case may be found in \cite{A1,I2,MV}.  In particular, they apply 
to the case where $\BX = \BX_1\times \BX_2$, and $\BX_1$ and $\BX_2$ have real rank one.
However, none of these results applies to the simple operator $\cL_1^{it}\, (\cL_1+\cL_2)^{iu} \cL_2^{iv}$, 
where $\cL_1$ and $\cL_2$ denote the Laplace--Beltrami operators on~$\BX_1$ and $\BX_2$, respectively, and 
$u$, $v$ and $t$ are non-zero real numbers.  
To the best of our knowledge, the problem of establishing Marcinkiewicz type
multiplier theorems for the spherical Fourier transform on noncompact
symmetric spaces has not been considered yet.
The purpose of this paper is to fill this gap and prove the following.

\begin{theorem} \label{t: main} 
Suppose that $\BX_1$ and $\BX_2$ are rank one symmetric spaces of the noncompact type
of dimensions $n_1$ and $n_2$, respectively.
Suppose that $p$ is in $(1,\infty)\setminus \{2\}$, $N_1>(n_1+3)/2$, 
$N_2>(n_2+3)/2$, and that $B$ is a $\BG$-invariant operator such that  the estimate
\begin{align*}
&   \bigmod{\partial_{\la_1}^{j_1}\partial_{\la_2}^{j_2} m_B(\la_1,\la_2)}\\
&   \le C \, \Big(\min \big[\mod{\la_1-i\de(p)\rho_1},\mod{\la_1+i\de(p)\rho_1}\big]\Big)^{-j_1}
	\cdot  \Big(\min \big[\mod{\la_2-i\de(p)\rho_2},\mod{\la_2+i\de(p)\rho_2}\big]\Big)^{-j_2} 
\end{align*}
holds for all $j_1 \leq N_1$ and $j_2\leq N_2$, uniformly in $(\la_1,\la_2) \in T_p^{(1)} \times T_p^{(2)}$.
Then $B$ extends to a bounded operator on $\lp{\BX_1\times \BX_2}$.
\end{theorem}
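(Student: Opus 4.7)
My approach is to reduce the bi-parameter theorem to a vector-valued (operator-valued) version of Ionescu's rank-one theorem, applied iteratively in the two spectral variables. The product shape of the Marcinkiewicz hypothesis is precisely what makes this reduction go through, and it avoids having to redo the heavy contour-shift analysis of \cite{I1} from scratch in the product setting.

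\textbf{Step 1: operator-valued Ionescu theorem on a single rank-one space.} I would first establish the following operator-valued extension of Ionescu's result. Suppose $E$ is a UMD Banach space and $m : \TWp \to \cB(E)$ is a holomorphic, Weyl-invariant, $\cB(E)$-valued function satisfying
\[
\bignorm{\partial_\zeta^j m(\zeta)}{\cB(E)}
\leq C \, \Big( \min \big[\mod{\la - i\de(p)\rho},\,\mod{\la + i\de(p)\rho}\big] \Big)^{-j}
\qquad (j = 0,1,\dots,N).
\]
Then the associated operator-valued multiplier extends to a bounded operator on $\lp{\BX; E}$. The proof would follow the scheme of Ionescu's scalar argument (shifting contours of integration inside $\TWp$, integration by parts to produce weighted $L^1$ kernel estimates, and a Calder\'on--Zygmund decomposition on $\BX$), with scalar absolute values systematically replaced by $\cB(E)$-norms. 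The UMD hypothesis on $E$ is invoked wherever a vector-valued maximal or square-function estimate is required.

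\textbf{Step 2: iteration in the two variables.} I would then apply Step 1 on $\BX_1$ with $E := \lp{\BX_2}$. The Marcinkiewicz hypothesis on $m_B$ implies two things via Ionescu's scalar theorem on $\BX_2$. First, for each fixed $\la_1 \in T_p^{(1)}$ the slice $m_B(\la_1,\cdot)$ satisfies the Ionescu hypothesis on $\BX_2$ with $N_2>(n_2+3)/2$ derivatives, so it is the multiplier of a bounded operator $T(\la_1)$ on $\lp{\BX_2}$ with $\bignorm{T(\la_1)}{\cB(E)}\leq C$ uniformly in $\la_1$. Second, the same scalar theorem applied to the multiplier $\partial_{\la_1}^{j_1} m_B(\la_1,\cdot)$ on $\BX_2$ yields
\[
\bignorm{\partial_{\la_1}^{j_1} T(\la_1)}{\cB(E)}
\leq C \, \Big( \min \big[\mod{\la_1 - i\de(p)\rho_1},\,\mod{\la_1 + i\de(p)\rho_1}\big] \Big)^{-j_1}
\qquad (j_1 \leq N_1).
\]
Holomorphy of $T$ in operator norm on $T_p^{(1)}$ follows from the joint holomorphy of $m_B$ together with Cauchy's formula and the uniform bounds just obtained. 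Thus $T: T_p^{(1)} \to \cB(E)$ itself fulfils the operator-valued Ionescu hypothesis of Step 1; applying Step 1 gives boundedness of the composed operator on $\lp{\BX_1 ; E} = \lp{\BX_1 \times \BX_2}$, and by construction this composed operator is $B$.

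\textbf{Main obstacle.} The substantive technical work lies in Step 1: obtaining a Banach-space-valued analogue of Ionescu's rank-one theorem. While the overall scheme of his scalar proof is robust enough to transfer to $\cB(E)$-valued kernels, one has to check that every scalar inequality admits an $\cB(E)$-norm counterpart, and that the Calder\'on--Zygmund machinery on the noncompact symmetric space $\BX$ admits a vector-valued formulation under the UMD hypothesis on $E = \lp{\BX_2}$. A secondary issue is justifying the operator-norm differentiability of $\la_1 \mapsto T(\la_1)$ to the full order $N_1$, so that $\partial_{\la_1}^{j_1} T(\la_1)$ in Step 2 is a bona fide derivative of an operator-valued function and not merely the operator associated to the differentiated multiplier; this reduces to a uniform bound on the next order of difference quotients, which is available from the Marcinkiewicz hypothesis.
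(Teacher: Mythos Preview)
Your iteration scheme has a real gap in Step~1. An operator-valued Mihlin--H\"ormander (or Ionescu) theorem with hypotheses phrased only in terms of $\cB(E)$-\emph{norm} bounds on $m$ and its derivatives is not available: for operator-valued Fourier multipliers on a UMD space one needs $R$-boundedness of the families $\{\la^j\partial_\la^j m(\la)\}$, not mere uniform boundedness (this is the content of Weis's theorem; norm bounds alone are known to be insufficient). In Step~2 you only obtain, via the scalar Ionescu theorem on $\BX_2$, \emph{norm} bounds $\bignorm{\partial_{\la_1}^{j_1} T(\la_1)}{\cB(\lp{\BX_2})}\leq C\,\Theta_p^{(1)}(\la_1)^{-j_1}$; nothing in the Marcinkiewicz hypothesis on $m_B$ yields $R$-boundedness of these families, and proving it would amount to a uniform square-function control of the Ionescu operators on $\BX_2$ that is essentially as hard as the theorem itself. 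A secondary issue: Ionescu's rank-one proof does not proceed via a Calder\'on--Zygmund decomposition on $\BX$ (the measure is non-doubling), but via transference to the solvable group $\OBN\BA$ and then to $\BA\cong\BR$, where the scalar Mihlin theorem is applied. That last step is exactly where $R$-boundedness would be required in an operator-valued version.

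The paper avoids this obstacle entirely by working directly with the kernel on the product space. It splits $k_B$ according to the position of $(a_1,a_2)$ relative to the origin and the walls of the Weyl chamber (local--local, local--global, global--global), expands the spherical functions accordingly (Stanton--Tomas near the origin, Harish-Chandra at infinity), and uses a transference principle for the semidirect product $\OBN\BA$ together with the Fig\`a-Talamanca--Herz algebra to reduce the core pieces to the \emph{scalar} bi-parameter Marcinkiewicz theorem on $\BA_1\times\BA_2\cong\BR^2$. The product structure of the hypothesis is exploited not through an operator-valued iteration but through the tensor structure of the spherical expansions and of the transference, so that only scalar multiplier theorems are ever invoked.
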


\noindent
A comparison between the condition for $m_B$ in Theorem~\ref{t: main} and the condition considered in 
\cite[Theorem~4.1]{I3} (restricted to the reduced case) may be found in Remark~\ref{rem: comparison MW Ion}.
We emphasize that the part of the kernel $k_B\circ \exp$ (corresponding to operators $B$ satisfying the 
assumptions of the theorem above) may be much more singular near the walls of the Weyl chamber 
than the kernels of operators satisfying H\"ormander type conditions.  
In particular, Ionescu proved that the latter can be estimated using Herz's majorizing principle.  
In our case, we need a more sophisticated transference principle that takes into account the oscillations of the kernel.

\noindent
Our result extends to reducible symmetric spaces
of the form $\BX_1\times \cdots \times \BX_m$, where $m\geq 3$ and each of the factors
is a rank one symmetric space of the noncompact type.  The modifications of the proofs
needed to cover this more general case are straightforward, though lengthy, and are omitted.  

Our main result suggests that conditions of Marcinkiewicz type are worth considering in the setting
of higher rank noncompact symmetric spaces.  We shall come back to this problem in the near future.

Our paper is organized as follows.  Section~\ref{s: Background} contains some notation
concerning symmetric spaces, and a few preliminary results concerning
the spherical Fourier analysis on rank one symmetric spaces.
Section~\ref{s: Boundedness results for convolution operators} contains some
criteria for the boundedness of various convolution operators on $L^p$ that
are frequently used in the sequel.  This includes some consequences of a transference result
for left invariant operators on semidirect products of groups \cite{CMW}, which generalizes
previous results of Ionescu \cite{I1}.
Section~\ref{s: Statement} contains the statement of our main result, Theorem~\ref{t: main},
together with an outline of its proof.  Here we introduce the splitting of the
operator $B$ as a sum of three operators $B_0$, $B_1$ and $B_2$.
Sections~\ref{s: loc-loc}, \ref{s: loc-glob} and \ref{s: glob-glob}
contain all the details concerning the analysis of the operators $B_0$, $B_1$ and $B_2$, respectively.

We will use the ``variable constant convention'', and denote by $C,$
possibly with sub- or superscripts, a constant that may vary from place to
place and may depend on any factor quantified (implicitly or explicitly)
before its occurrence, but not on factors quantified afterwards.

\section{Background and preliminary results} \label{s: Background}

\subsection{Preliminaries on symmetric spaces}
Suppose that $\BG$ is a noncompact semisimple Lie groups with finite centre.
Denote by $\BK$ a maximal compact subgroup of $\BG$
and consider the associated Riemannian symmetric space of the noncompact type $\BX:=\BG/\BK$.
We briefly summarize the main features of spherical harmonic analysis on $\BX$.
The books of S.~Helgason \cite{H1,H2} and Gangolli and Varadarajan \cite{GV}
are basic references concerning the analysis on symmetric spaces.

Denote by $\theta$ a Cartan involution of the Lie algebra $\frg$ of $\BG$,
and write $\frg = \frk \oplus \frp$ for the corresponding
Cartan decomposition. Let $\fra$ be a maximal
abelian subspace of $\frp$, and denote by $\fra^*$ its dual space,
and by $\fra_\BC^*$ the complexification of $\fra^*$.
Denote by $\Sigma$ the set of (restricted) roots
of $(\frg,\fra)$; a choice for the set of positive roots
is written $\Sigma^+$, and $\fra^+$ denotes the corresponding Weyl chamber.
The vector $\rho$ denotes $(1/2)\sum_{\al \in \Sigma^+} m_{\al} \,\al$,
where $m_{\al}$ is the multiplicity of $\al$.
We denote by $\Sppp$ the set of simple
roots in $\Sigma^+$, and by $\Spp$ the set of indivisible
positive roots.
Denote by $W$ the Weyl group of $(\BG,\BK)$,
and by $\bW$ the interior of the convex hull of the points $\{w\cdot \rho: w \in W\}$.
For each $p$ in $(1,\infty) \setminus \{2\}$, we let
 \begin{equation*}\de(p)=\mod{2/p-1},\end{equation*}
then we denote by $\bW_p$ the dilate of $\bW$ by $\de(p)$,
and by $\TWp$ the corresponding tube $\fra^*+i\bW_p$ in $\fra_\BC^*$.

The spherical Fourier transform of an integrable function $g$ on $\BG$
is the function $\st g$, defined by
$$
\st g (\la)
= \int_{\BG} g(x) \, \vp_{-\la}(x) \wrt x
\quant \la \in \fra_\BC^*
$$
where $\vp_{\la}$ denote the spherical functions on $\BG$.
We shall write also $\sft g$ in place of $\st g$.  Recall that $\st g$ is Weyl-invariant.
For ``nice'' $\BK$--bi-invariant functions $g$, the inversion formula is given by
\begin{equation*}
g(x)
= \int_{\fra^*} \sft g(\la)\, \vp_{\la}(x) \, \wrt \nu(\la),
\end{equation*}
where the Plancherel measure $\nu$ is given by
$
\wrt \nu(\la)
= \bigmod{\bc(\la)}^{-2} \wrt\la,
$
and $\bc$ is the Harish-Chandra function.
In the rank one case it is well known that there exists a constant $C$ such that
\begin{equation} 
\label{f: HCest}
\bigmod{\partial^{j} \check\bc^{-1}(\la)}
\leq  C \,  \big(1+|\la|\big)^{(n-1)/2-j}
\quant \la: 0\leq \Im \la \leq \rho
\end{equation}
where $n$ denotes the dimension of $\BX$ (see, for instance, \cite[Lemma 4.2]{ST} and \cite[Appendix A]{I1}).
The spherical Fourier transform extends to $\BK$--bi-invariant
tempered distributions on $\BG$ (see, for instance, \cite[Ch.~6.1]{GV}).

\subsection{Background on symmetric spaces of rank one} \label{s: BX rank one}
In this section we consider the case where $\BX$ has real rank one, i.e. the algebra
$\fra$ is one dimensional.  We denote by $\BA$ the multiplicative group $\exp\fra$,
which is obviously isomorphic to the additive group of the vector space $\fra$.
It is convenient to choose a particular isomorphism between $\BA$ and $\fra$,
which we now describe.  Denote by $\al$ the unique simple positive root of the pair $(\frg,\fra)$.  
Denote by $H_0$ the unique vector in $\fra$ such that
$\al(H_0) = 1$, and normalize the Killing form of $\frg$ so that $\bigmod{H_0}^2 = 1$.
Every vector in $\fra$ is of the form $tH_0$, with $t$ in~$\BR$, and every element
of $\BA$ is then of the form $\exp(tH_0)$.  The Weyl chamber is $\fra^+ := \{tH_0: t>0\}$, and we
set $\BA^+ := \exp (\fra^+)$ and $\BA^- := \exp (-\fra^+)$.  We shall often write $a$ for $\exp(tH_0)$ and $\log a$ for $tH_0$.

The root system is either of the form $\{-\al,\al\}$ or of the form
$\{-2\al,-\al, \al,2 \al\}$.  We denote by~$m_\al$ and $m_{2\al}$ the
multiplicities of $\al$ and $2 \al$, respectively.  Observe that $m_\al+m_{2\al}+1=n$
and $m_{2\al} = 0$ in the case where $2\al$ is not a root.  
Clearly $2\rho = (m_\al+2m_{2\al}) \, \al$, and $2\mod{\rho} = m_\al+2m_{2\al}$.  
We consider the Lie algebras $\frn := \frg_{\al} + \frg_{2\al}$ and $\OV\frn := \frg_{-\al} + \frg_{-2\al}$,
where $\frg_\be$ denotes the root space associated to the root $\be$, and the corresponding connected and 
simply connected nilpotent Lie groups $\BN$ and $\OBN$, respectively.  
If $a$ is in $\BA$ and $\la$ belongs to $\fra^*$, we 
write $a^\la$ instead of $\e^{\la(\log a)}$.  Define
$$
\de(a)
:= 2^{-2\mod{\rho}} \, \big[a^\al-a^{-\al}\big]^{m_\al}  \,  \big[a^{2\al}-a^{-2\al}\big]^{m_{2\al}}
\quant a \in \BA^+
$$
 and note that  $\delta(a)$ is of order $(\log a)^{n-1}$ when $a$ is close to $1$ and of order $a^{2\rho}$ when $a$ is large.
 
We recall the following integration formula in Cartan co-ordinates:
\begin{equation*} 
\int_\BG f(x) \wrt x
	=  c_\BG\, \int_\BK\int_{\BA^+} \int_\BK f(kak') \, \de(a) \, \wrt k \wrt a \wrt k',
\end{equation*}
where the Haar measure on $\BK$ is normalized so as to have total mass $1$, $\wrt a$ denotes
the Lebesgue measure on $\BA$ and $c_\BG$ is a constant depending on the group $\BG$.

\subsubsection{Expansions of spherical functions}
\textsl{Near the origin} the spherical function $\vp_\la$ admits the following expansion
\cite[Theorem 2.1]{ST}. There exists a positive real number $r_0$
such that if $0< t\leq r_0$ and $L$ is any positive integer
\begin{equation} \label{f: SpherDecSmall}
\varphi_{\la}(a)
= A(\la,a) + R(\la,a),
\end{equation}
where
\begin{equation} \label{f: SpherDecSmall I}
\begin{aligned}
A(\la,a)
& = c_0 \, w(a) \, \cJ_{n/2-1}(\la t) \\ 
R(\la,a)
& = c_0 \, w(a) \, \sum_{\ell=1}^{L}\, t^{2\ell}\, a_{\ell}(t)\, \cJ_{n/2+\ell-1}(\la t)+E_{L+1}(\la t);
\end{aligned}
\end{equation}
here $a = \exp(tH_0)$, $\ds w(a) := \Big[\frac{\log^{n-1}a}{\de(a)}\Big]^{1/2}\!\!,$
$\cJ_{\mu}(x)=C_{\mu}z^{-\mu}J_{\mu}(z),$ with $J_\mu$ being the Bessel function of the first kind and order $\mu$, and
\begin{equation*} 
|a_{\ell}(t)|
          \leq C_{L}  
\qquad\hbox{and}\qquad
\begin{cases}
|E_{L+1}(\la t)| \leq C_L \, t^{2(L+1)} 
	& \hbox{if $|\la t|\leq 1$}\\
|E_{L+1}(\la t)| \leq C_L \, t^{2(L+1)}\, (\la t)^{-(n-1)/2-L-1} 
	& \hbox{if $|\la t|> 1$}.
\end{cases}
\end{equation*}

\noindent
\textsl{At infinity}, the celebrated Harish-Chandra expansion states that for $a$ in $\BA^+$ 
\begin{equation} \label{f: HC expansion}
\vp_{\la}(a )
= a^{-\rho+i\la}\, \bc(\la)\, [1+a^{-2\al}\, \om(\la,a)]
      +a^{-\rho -i\la}\, \bc(-\la)\, [1+a^{-2\al}\, \om(-\la,a)],
\end{equation}
where $\om(\la,a) = \sum_{\ell=1}^{\infty}\, \Ga_{\ell}(\la)\, a^{-2\ell\al}$
and there exist constants $b_\al$ such that the coefficients
$\Ga_\ell$ satisfy the following estimates \cite[Appendix A]{I1}
\begin{equation*} 
\bigmod{\partial^j_{\la}\Ga_{\ell}(\la)}
\leq C\,  \ell^{b_j}\,  \big(1+|\Re \la|\big)^{-j},
\end{equation*}
for $\ell\geq 1$ and $0\leq \Im \la \leq \rho$. As a consequence Ionescu obtained also the bound
\begin{equation} \label{f: estIonescu}
\bigmod{\partial^j_{\la}\om(\la,a)}+\bigmod{a\partial_a\partial^j_{\la}\om(\la,a)}
\leq C_j \, (1+|\Re \la|)^{-j}
\asymp (1+|\la|)^{-j},
\end{equation}
for all integers $j=0,1,\ldots$ whenever  $\al(\log a)\geq 1/2$ and $0\leq \Im \la\leq \rho.$

\begin{remark}  \label{rem: spherical inversion}
Suppose that $m$ is a Weyl-invariant function on $\fra^*$.  
By \eqref{f: HC expansion}, we may write
$$
\begin{aligned}
\cH^{-1}m (a)
	=  \int_{\fra^*} \!\! a^{i\la-\rho}\, \big[1+a^{-2\al}\om(\la,a) \big] \, \mbc (\la)  \wrt \la 
	 +\int_{\fra^*} \!\! a^{-i\la-\rho}\, \big[1+a^{-2\al}\om(-\la,a) \big] \, \bc^{-1} (\la) \wrt \la
\end{aligned}
$$
for every $a$ in $\BA^+$. 
Changing variables ($-\la = \la'$) in the second integral above and using the Weyl invariance
of $m$ shows that the second integral is equal to the first.   Therefore
\begin{equation} \label{f: inversion infinity}
\cH^{-1}m (a)
= { 2} \int_{\fra^*} a^{i\la-\rho}\, \big[1+a^{-2\al}\om(\la,a) \big] \, \mbc (\la)  \wrt \la 
\quant a \in \BA^+.
\end{equation}
\end{remark}

\subsubsection{Estimates for multiplier operators}
In this subsection we collect several results, most of which already known,
concerning spherical Fourier multipliers on rank one symmetric spaces.  
These results will be used later to estimate various error
terms arising in the study of multiplier operators satisfying Marcinkiewicz--Mihlin
conditions on reducible symmetric spaces.
First, we need the following definition.  For each $p$ in $(1,\infty) \setminus\{2\}$
we denote by $\Theta_p: \fra_\BC^*\to [0,\infty)$ the function, defined by
\begin{equation} \label{f: Theta}
\Theta_p(\la)
= \min \Big[\mod{\la-i\de(p)\rho},\mod{\la+i\de(p)\rho}\Big].
\end{equation}
Notice that $\Theta_p(\la)^2 \asymp (\Re\la)^2 + \dist \big(\Im\la, \bW_p^c\big)^2$ on $\OV T_p$, and that  
$\Theta_p(\la) \asymp \mod{\la}$ for $\mod{\la}$ large.  Recall that in this case 
$T_p = \{\la \in \BC: \bigmod{\Im \la} < \de(p) \, \mod{\rho} \}$.  

\begin{definition} \label{def: Horm}
Suppose that $p$ is in $(1,\infty)\setminus \{2\}$, that $N$ is a positive integer
and that $m$ is a bounded Weyl-invariant holomorphic function on the strip $\TWp$.
We say that $m$ satisfies \emph{H\"ormander condition of order $N$ on $\TWp$} [resp.
\emph{of order $N$ at infinity on $\TWp$}] if 
$$
\bignorm{m}{\Horm{\TWp}{N}}
:= \sup_{\zeta\in T_p} \Theta_p(\zeta)^j\bigmod{\partial^jm(\zeta)}
<\infty 
$$
[resp. $\bignorm{m}{\Horminfty{\TWp}{N}}
:= \sup_{\zeta\in T_p} \big(1+\bigmod{\zeta}\big)^j\bigmod{\partial^jm(\zeta)} <\infty$]
for $j=0,1,\ldots,N$. Then $\Horm{\TWp}{N}$ [resp.\ $\Horminfty{\TWp}{N}$] is defined as the spaces of those 
functions $m$ for which $\bignorm{m}{\Horm{\TWp}{N}}$ [resp. $\bignorm{m}{\Horminfty{\TWp}{N}}$] is finite.
\end{definition}

\begin{definition} \label{def: Horminfty}
Suppose that $N$ is a positive integer and that $m$ is a bounded function on $\fra^*$.
We say that $m$ satisfies a  \emph{H\"ormander condition of order $N$ on $\fra^*$} [resp. a 
\emph{of order $N$ at infinity on $\fra^*$}] if 
$$
\bignorm{m}{\Horm{\fra^*}{N}}
:= \sup_{\zeta\in \fra^*} \bigmod{\zeta}^j\bigmod{\partial^jm(\zeta)}
<\infty 
$$
[resp. $\bignorm{m}{\Horminfty{\fra^*}{N}}
:= \sup_{\zeta\in \fra^*} \big(1+\bigmod{\zeta}\big)^j\bigmod{\partial^jm(\zeta)}
<\infty$]
for $j=0,1,\ldots,N$.  The space $\Horm{\fra^*}{N}$ [resp. 
$\Horminfty{\fra^*}{N}$] is defined as the space of those 
functions $m$ for which $\bignorm{m}{\Horm{\fra^*}{N}}$ [resp. $\bignorm{m}{\Horminfty{\fra^*}{N}}$] is finite.
\end{definition}

\noindent
Suppose that $m$ is holomorphic in $T_p$.   
For each $v$ in $\big(-\de(p), \de(p)\big)$, set $m_v(\la) := m(\la+iv\rho)$, i.e.,
$m_v$ is the restriction of $m$ to $\fra^*+iv\rho$.  
The following observation is presumably known to the experts.

\begin{proposition} \label{p: extension}
Suppose that $m$ is in $\Horm{\TWp}{N}$.  Then $m$ extends to a function, still denoted $m$, in
$C^N \big(\OV{T_p}\setminus\{\pm i \de(p) \rho\}\big)$, and the boundary values
$m(\cdot\pm i\de(p)\rho)$ 
satisfy a Mihlin--H\"ormander condition of order $N$ on $\fra^*$. 
\end{proposition}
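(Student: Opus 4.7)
The plan is to exploit the H\"ormander bound locally near each boundary point of $T_p$ distinct from $\pm i\de(p)\rho$, where the weight $\Theta_p$ stays uniformly away from zero. First I would fix such a boundary point $\zeta_0 = \la_0 + i\de(p)\mod{\rho}$ with $\la_0\neq 0$ (the lower face and the Weyl image being handled symmetrically) and choose a closed disk $D\subset\BC$ centered at $\zeta_0$ whose closure avoids $\pm i\de(p)\rho$. Since $\Theta_p$ is continuous and positive on $\OV D$, one has $\Theta_p(\zeta)\geq c(\la_0)>0$ there, and the hypothesis $m\in\Horm{\TWp}{N}$ forces
\[
\sup_{\zeta\in D\cap T_p}\bigmod{\partial^jm(\zeta)}\leq C(\la_0),\qquad j=0,1,\ldots,N.
\]

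Next I would construct the extension by induction on $j$. For $0\leq j\leq N-1$, the holomorphic function $\partial^jm$ has uniformly bounded complex derivative $\partial^{j+1}m$ on the convex region $D\cap T_p$, so the mean value theorem makes it Lipschitz and yields a unique continuous extension to $D\cap\OV{T_p}$. Patching across the various choices of $\zeta_0$ produces $g_j\in C^0(\OV{T_p}\setminus\{\pm i\de(p)\rho\})$ for $j\leq N-1$, extending $\partial^jm$ and still satisfying $\bigmod{g_j(\zeta)}\leq C\,\Theta_p(\zeta)^{-j}$ by passing to the limit in the original estimate. To identify $g_j$ along the boundary with the tangential derivatives of the trace $\wt m_+(\la):=g_0(\la+i\de(p)\mod{\rho})$, I would use
\[
\partial^{j-1}m(\la_2+iv\mod{\rho})-\partial^{j-1}m(\la_1+iv\mod{\rho}) = \int_{\la_1}^{\la_2}\partial^jm(\la+iv\mod{\rho})\wrt\la
\]
for $\la_1,\la_2$ of the same sign and $0\leq v<\de(p)$, and then let $v\to\de(p)^-$. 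Dominated convergence, together with the already-established continuity of $g_j$ up to the boundary, yields $\wt m_+^{(j)}(\la)=g_j(\la+i\de(p)\mod{\rho})$ and the bounds $\bigmod{\wt m_+^{(j)}(\la)}\leq C\mod{\la}^{-j}$ for $0\leq j\leq N-1$.

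Finally, for the top order $j=N$, the same Lipschitz argument applied to $\partial^{N-1}m$ on $D\cap T_p$ shows that $\wt m_+^{(N-1)}$ is Lipschitz on every compact subset of $\fra^*\setminus\{0\}$ with Lipschitz constant controlled by $C\mod{\la}^{-N}$; hence its distributional derivative $\wt m_+^{(N)}$ lies in $L^\infty_{\mathrm{loc}}(\fra^*\setminus\{0\})$ and satisfies $\bigmod{\wt m_+^{(N)}(\la)}\leq C\mod{\la}^{-N}$. Combined with the previous paragraph this is exactly the Mihlin--H\"ormander condition of order $N$ for $\wt m_+$; the lower face is treated identically, and the claimed $C^N$ regularity of the full extension is obtained by identifying, almost everywhere on the boundary, $\wt m_+^{(N)}$ with the nontangential limit of the bounded holomorphic function $\partial^Nm$ via a Fatou-type theorem. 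The main obstacle is precisely this top-order step, since the H\"ormander bound cannot be differentiated once more to give Lipschitz control of $\partial^Nm$ up to the boundary; the way around it is to restrict to tangential differentiation along the boundary line, for which the already established Lipschitz regularity of $\partial^{N-1}m$ suffices.
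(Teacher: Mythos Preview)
Your argument is essentially the paper's: both exploit the bound on $\partial^{j+1}m$ to show that $\partial^j m$ is uniformly Lipschitz on regions where $\Theta_p$ stays bounded below, and then pass to the limit. The paper phrases this via vertical increments $m(\la+iv\rho)-m(\la+iu\rho)=i\rho\int_u^v m'(\la+is\rho)\wrt s$ on slabs $\{|\la|\ge\de\}$; you phrase it via local Lipschitz bounds on disks centred at boundary points. The content is the same, and your identification of the boundary traces of $\partial^j m$ with the tangential derivatives of $\wt m_+$ via the integral identity and dominated convergence matches the paper's final distributional step.

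You are actually more careful than the paper at the top order $j=N$: the paper simply asserts that ``iterating the argument'' works for all $j\le N$, which taken literally would require a bound on $\partial^{N+1}m$ that is not assumed. Your route---Lipschitz continuity of $\wt m_+^{(N-1)}$ with constant $C|\la|^{-N}$, hence an $L^\infty_{\mathrm{loc}}$ weak $N$-th derivative obeying the Mihlin--H\"ormander bound---is the honest statement, and it is what the later sections actually use. Your closing Fatou step, however, does not upgrade this to the stated $C^N$ regularity on $\OV{T_p}\setminus\{\pm i\de(p)\rho\}$: Fatou yields only a.e.\ nontangential boundary values of the bounded holomorphic function $\partial^N m$, not a continuous extension. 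This is precisely the point the paper glosses over as well; in practice the $L^\infty$ weak derivative suffices for everything downstream.
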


\begin{proof}
Note that if $0 < u < v < \de(p)$, then 
$
\ds m(\la+iv\rho) - m(\la+iu\rho) = i\rho \int_u^v m'(\la+is\rho) \wrt s,
$
so that for each $\de >0$
$$
\sup_{\mod{\la} \geq \de}\, \bigmod{m_v(\la)-m_u(\la)}
\leq \mod{\rho} \bignorm{m}{\Horm{T_p}{N}} \, \int_u^v \dist\big(\la+is\rho, i\de(p)\rho\big)^{-1}  \wrt s
\leq \frac{\mod{\rho}}{\de}\, \bignorm{m}{\Horm{T_p}{N}} \, (v-u).
$$
Denote by $m_{\de(p)}$ the uniform limit on $\fra^*\setminus (-\de,\de)$ 
of $\{m_{u_j}\}$, where $\{u_j\}$ is any sequence such that $u_j\uparrow \de(p) \rho$ (it is straightforward to check
that the limit does not depend on the particular sequence $\{u_j\}$ chosen).  Clearly $m_{\de(p)}$
is continuous on $\fra^*\setminus(-\de,\de)$.  Since this argument works for every $\de>0$,
$m_{\de(p)}$ is continuous on $\fra^*\setminus\{0\}$.
By arguing similarly we may define $m_{-\de(p)}$, which is also continuous on $\fra^*\setminus\{0\}$. 
With a slight abuse of notation, denote by $m$ the function on $\OV{T_p}\setminus{\pm i\de(p)\rho}$
that agrees with the given function $m$ on $T_p$, and is equal to $m_{\de(p)}$
on $\fra^*+i\de(p)\rho$ and to $m_{-\de(p)}$ on $\fra^*-i\de(p)\rho$.  A classical argument
now shows that $m$ is, in fact, a continuous function on $\OV{T_p}\setminus\{\pm i \de(p) \rho\}$.

By iterating the argument above, we see that for each $j\in \{1,\ldots, N\}$ the derivative $m^{(j)}$ 
extends to a continuous function, still denoted by $m^{(j)}$, on $\OV{T_p}\setminus\{\pm i \de(p) \rho\}$. 
It remains to prove that $m_{\de(p)}^{(j)}$ is the $j^{\textrm{th}}$ derivative
of $m_{\de(p)}$.  It is straightforward to do so in the sense of distributions,
hence in the classical sense, in $\fra^*\setminus\{0\}$,  because of the continuity of 
the functions $m_{\de(p)}^{(j)}$.  
\end{proof}

\noindent
Throughout the paper $\bC$ denotes a smooth even function on $\BR$
that is equal to~$1$ on $[-1,1]$, vanishes outside of the interval $[-2,2]$ and satisfies $0\leq \bC \leq 1$. 
Define a $\BK$--bi-invariant function $\Phi$ on $\BG$ by 
$$
\Phi(a) 
:= \bC \big(\al(\log a) \big)
\quant a \in \BA.  
$$
For $m$ in $\Horm{\TWp}{N}$, define the $\BK$--bi-invariant functions $\kA$, $\kR$, $\ku$, 
and $\ko$ on $\BG$ by
\begin{equation} \label{f: kernels rank one}
\begin{aligned} 
\kA(a)
& := \Phi(a) \, \int_{\fra^*} A(\la,a)\, m(\la)\, \wrt \nu(\la)  \\
\kR(a)
& := \Phi(a) \, \int_{\fra^*} R(\la,a) \, m(\la) \wrt\nu(\la)  \\
\ku(a)
& := [1-\Phi(a)] \,  \int_{\fra^*} a^{i\la-\rho} \, \mbc(\la) \wrt \la  \\
\ko(a)
& := [1-\Phi(a)] \,  \int_{\fra^*} a^{i\la-\rho-2\al} \, \om(\la,a)\,\mbc(\la)\wrt \la
\end{aligned} 
\quant a \in \BA^+. 
\end{equation}
Note that, by \eqref{f: SpherDecSmall} and \eqref{f: inversion infinity},
\begin{equation*}
\Phi \,\cH^{-1}m 
=  \kA+\kR
\qquad\hbox{and}\qquad 
\big(1-\Phi\big) \,\cH^{-1}m 
=  2\ku+2\ko.
\end{equation*}

\begin{lemma} \label{l: one dim multipliers loc}
Suppose that $N> (n+3)/2$ and that $1<p<2$.  Then there exists
a constant $C$ such that for every Weyl-invariant function $m$ in $\Horminfty{\fra^*}{N}$ the following hold:
\begin{enumerate}
\item[\itemno1]
$
\bignorm{\kA}{\Cvp{\BX}}
\leq C \,  \bignorm{m}{\Horminfty{\fra^*}{N}};
$
\item[\itemno2]
$\ds \bigmod{\kR(a)} \leq C\, \bignorm{m}{\Horminfty{\fra^*}{N}} \, \frac{\Phi(a)}{(\log a)^{n-1}}$, whence 
$
\bignorm{\kR}{\lu{\BX}}
\leq  C\,  \bignorm{m}{\Horminfty{\fra^*}{N}}.
$
\end{enumerate}
\end{lemma}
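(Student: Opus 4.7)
\emph{Overview.} The two estimates are established by complementary methods. Part~(i) is proved via Euclidean transference, exploiting the appearance of the Euclidean radial Fourier kernel $\cJ_{n/2-1}$ in $A(\la,a)$; part~(ii) is proved by a direct pointwise bound on $\kR$, from which the $L^1$ estimate follows by integrating against the Cartan Jacobian.

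\emph{Part (i).} Writing $a = \exp(tH_0)$, the identity $A(\la, a) = c_0\, w(a)\, \cJ_{n/2-1}(\la t)$ and the fact that $\cJ_{n/2-1}$ is, up to a constant, the integral kernel representing the inverse Fourier transform of radial functions on $\BR^n$ allow one to view
$$
\kA(a) = c_0\, \Phi(a)\, w(a) \int_{\fra^*} \cJ_{n/2-1}(\la t)\, m(\la)\, \bigmod{\bc(\la)}^{-2} \wrt \la
$$
as the restriction to a neighbourhood of the identity of a radial Euclidean convolution whose Fourier symbol, at least for $\bigmod{\xi}\geq 1$, coincides up to smooth factors with $m(\xi)\bigmod{\bc(\xi)}^{-2}/\bigmod{\xi}^{n-1}$. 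Combining the hypothesis $m \in \Horminfty{\fra^*}{N}$ with the Plancherel density estimate \eqref{f: HCest} one checks that this symbol satisfies a Mihlin condition of order $N$ on $\BR^n$, with norm bounded by $\bignorm{m}{\Horminfty{\fra^*}{N}}$. Since $N > (n+3)/2 > n/2$, the classical H\"ormander multiplier theorem yields boundedness on $\lp{\BR^n}$. Because $\Phi(a)$ truncates to a bounded region, where $\delta(a)\wrt a \asymp t^{n-1}\wrt t$, the convolver bound on $\BX$ follows from the transference tools of Section~\ref{s: Boundedness results for convolution operators}.

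\emph{Part (ii).} It suffices to prove the pointwise bound $\bigmod{\kR(a)} \leq C\, \bignorm{m}{\Horminfty{\fra^*}{N}}\,\Phi(a)/(\log a)^{n-1}$; the $L^1$ estimate then follows at once, since on $\supp \Phi$ one has $\delta(a) \asymp t^{n-1}$, so that the integrand $\Phi(a)(\log a)^{-(n-1)}\delta(a)$ is $O(1)$ on a compact interval. Fix $L > N$ large. The remainder $E_{L+1}(\la t)$, by its stated estimates and \eqref{f: HCest}, contributes at most $C t^{2(L+1)-n}$, which is dominated by $t^{-(n-1)}$ for small $t$. Each main term $t^{2\ell} a_\ell(t) \cJ_{n/2+\ell-1}(\la t)$ is handled by splitting the $\la$-integral at $\la t = 1$: on $\set{\la t \leq 1}$ the trivial bound $\bigmod{\cJ_{n/2+\ell-1}} \leq C$ together with \eqref{f: HCest} yields a contribution of size $C t^{2\ell-n}$, which is dominated by $t^{-(n-1)}$ since $\ell \geq 1$; on $\set{\la t > 1}$ one integrates by parts $N$ times, using the Bessel recursion (equivalently, the oscillatory asymptotics of $\cJ_{n/2+\ell-1}$) to transfer derivatives onto $m(\la)\bigmod{\bc(\la)}^{-2}$. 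Each integration by parts contributes one factor of $t$ and saves $(1+\la)^{-1}$ at infinity---derivatives of $m$ being controlled by $\bignorm{m}{\Horminfty{\fra^*}{N}}$ and those of $\bigmod{\bc}^{-2}$ by \eqref{f: HCest}. After $N > (n+3)/2$ iterations, the $\la$-integral converges absolutely and produces the required $t^{-(n-1)}$ decay, uniformly in $\ell \in \set{1,\dots,L}$.

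\emph{Main difficulty.} The principal technical point is the transference in part~(i): one must verify carefully that the $\Phi$-localised $\BG$-convolution corresponds, under the parametrisation $a = \exp(tH_0)$ and the weight $w(a)$, to a genuine Euclidean convolution on a neighbourhood of the origin in $\BR^n$, accounting for the small-$\la$ behaviour of $\bigmod{\bc(\la)}^{-2}$ and the Weyl invariance of $m$, so that $L^p$ bounds transfer cleanly. Once this transference is supplied by Section~\ref{s: Boundedness results for convolution operators}, the rest of part~(i) reduces to the classical H\"ormander theorem on $\BR^n$, and part~(ii) reduces to a careful but routine integration-by-parts argument with Bessel asymptotics.
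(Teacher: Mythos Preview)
Your plan is correct and follows precisely the Stanton--Tomas argument that the paper itself invokes: the paper's entire proof of this lemma is the sentence ``The proof is implicit in \cite[Sections~4 and~5]{ST}'', together with the observation that since $\kA$ and $\kR$ are supported near the origin no contour shift is needed, so conditions on $\fra^*$ (rather than on the tube $T_p$) suffice. Your sketch of part~\rmi\ via the Euclidean radial Fourier kernel and H\"ormander's theorem, and of part~\rmii\ via a dyadic split at $\la t=1$ combined with Bessel asymptotics and integration by parts, is exactly the content of \cite[\S\S4--5]{ST}.

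One small correction: the transference step you attribute to Section~\ref{s: Boundedness results for convolution operators} is not the semidirect-product principle developed there (Theorem~\ref{t: Transference principle}), but rather the Coifman--Weiss transference \cite[Theorem~8.7]{CW} for groups with a Cartan decomposition, which the paper cites in Corollary~\ref{c: Transference principle}~\rmii\ and in the proof of Proposition~\ref{p: kB011}~\rmiii. Also, in part~\rmi\ you correctly flag that the Euclidean symbol $m(\la)\bigmod{\bc(\la)}^{-2}/\la^{n-1}$ matches a Mihlin symbol only for $\mod{\la}\geq 1$; the low-frequency piece (where $\bigmod{\bc(\la)}^{-2}\sim c\la^2$ makes the symbol singular at the origin for $n>3$) must be cut off separately and shown to produce a bounded compactly supported kernel---this is routine, and the paper carries out the analogous splitting explicitly in the product setting via \eqref{f: smooth finite partition} and Proposition~\ref{p: kB011}~\rmi.
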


\begin{proof}
The proof is implicit in \cite[Sections~4 and~5]{ST}.  
In fact, the stronger assumption that $m$ is in $\Horminfty{T_p}{N}$ is made therein.  
However, $\kA$ and $\kR$ are supported near the origin, and 
it is straightforward to check that the arguments in \cite{ST} carry over to the case where 
$m$ is in $\Horminfty{\fra^*}{N}$, because there is no need to shift the contour of integration to
obtain local estimates.
\end{proof}

\noindent
The group $\BG$ also admits the Iwasawa decomposition $\BG= \OBN\BA\BK$.  The corresponding integration 
formula reads
\begin{equation*} 
\int_\BG f(x) \wrt x
	=  c_\BG\, \int_{\OBN} \int_{\BA} \int_{\BK} f(v ak) \, a^{2\rho} \, \wrt v \wrt a\wrt k.
\end{equation*}
For each $g$ in $\BG$ we denote by $[g]_+$ and $\exp\big[H(g)\big]$ the middle components of $g$ in the 
Cartan $\BG=\BK \BA^+ \BK$ and the Iwasawa $\BG= \OBN\BA\BK$ decompositions, respectively. 
Recall that in the rank one case $H(v)$ is in $\OV{\fra^+}$ for every $v$ in $\OBN$ \cite[Corollary~6.6]{H1}. 
By \cite[Lemma 3]{I2}, 
\begin{equation} \label{f: decom Iwas}
[vb]_+
= b  \,\exp\big[H(v)\big] \, \exp\big[E(v,b)H_0\big]  
\qquad\hbox{and}\quad 0\leq E(v,b)\leq 2b^{-2}
\end{equation}
for every $b\in \BA^+$ and every $v\in\OBN$. For further reference we remark that \eqref{f: decom Iwas} implies that $[vb]_+\ge b,$ for $b\in \BA^+, v\in\OBN$. We set $P(v) := \e^{-\rho H(v)}$.  
It is well known that for any $q>1$ both $P(v)^q$ and $H(v)P(v)^q$ are in $\lu{\OBN}$.  This follows 
by an explicit computation starting from \cite[Theorem~6.1~\rmii]{H3}.  
Now we outline the strategy of the main result in \cite{I1}: the reader will find all the details therein.
We warn the reader that our notation is different from that employed in \cite{I1}.  This change is motivated
by the need of keeping the formulae as compact as possible, in view of the application of Ionescu's strategy 
to the product case.  

We need the following notation.  For $q\in [1,\infty],$ we let $Cv_q({\BA})$ be the space of bounded operators on $\laq{\BA}$
	that commute with translations (equivalently the space of convolutors), 
	equipped with the operator norm on $\laq{\BA}$.
Denote by $\chi_{\BA^+}$ and $\chi_{\BA^-}$ the characteristic functions of $\BA^+$ and $\BA^-$, respectively; 
we extend them to $\OBN\BA$
by requiring that $\chi_{\BA^+}(va) = \chi_{\BA^+}(a)$ and $\chi_{\BA^-}(va) = \chi_{\BA^-}(a)$.  
Recall that any function on $\BX$
may be identified with a function on $\OBN\BA$; in particular, $\ku(vb) = \ku([vb]_+)$.
It will be clear from the context whether we think of $\ku$ as a function on $\OBN\BA$ or
as a $\BK$-invariant function on $\BX$.  
For $m$ in $\Horm{T_p}{N}$ define, at least formally, 
\begin{equation} \label{f: phip}
\phi_p(a)
:= 
\big[1-\Phi(a)\big]\, a^{\de(p)\rho} \int_{\fra^*} a^{i\la} \, \mbc(\la)\wrt\la
\quant a \in \BA.   
\end{equation}
Notice that if $m$ is bounded and rapidly decreasing at infinity, then the integral above is
convergent. 
Notice also that $\phi_p$ may not be Weyl-invariant on $\BA$. 
We shall often use the following formulae for $\phi_p$, obtained from \eqref{f: phip} 
by moving the contour of integration from $\fra^*$
to $\fra^*+i \de(p)\rho$ and
to $\fra^*+i \big[\de(p)\rho-\rho^{p,\vep(a)}\big]$, respectively: 
\begin{equation} \label{f: phipI}
\begin{aligned}
\phi_p(a)
& = \big[1-\Phi(a)\big] \, \int_{\fra^*} a^{i\la} \, \mbc_{\de(p)\rho}(\la)\wrt\la \\
\phi_p(a)
& = \big[1-\Phi(a)\big]\, \e^{\mod{\rho} \vep(a) \sign(\log a)}
	\int_{\fra^*} a^{i\la} \, \mbc_{\rho^{p,\vep(a)}}(\la)\wrt\la
\end{aligned}
\quant a \in \BA,   
\end{equation}
where $\rho^{p,\vep(a)} = \big(\de(p)-\vep/ \mod{\log a}\big) \rho$.  The following lemma
is due to Ionescu \cite{I1}.  Our proof is slightly different from the original one.  

\begin{lemma} \label{l: phip basic}
Suppose that $m$ is in $\Horm{T_p}{N}$ and it is rapidly decreasing at infinity.  Then 
$$
\begin{aligned}
\bigmod{\phi_p(a)}
\leq C \bignorm{m}{\Horm{\TWp}{N}}\, \frac{1-\Phi(a)}{\log a} 
\quad\hbox{and}\quad
\bigmod{a \partial_a\phi_p(a)}
& \leq C \bignorm{m}{\Horm{\TWp}{N}}\, \frac{1-\Phi(a)}{\log^2 a}
\quant a \in~\BA^+.
\end{aligned}
$$
Furthermore $\chi_{\BA^-}\phi_p$ is in $\lu{\BA}$, and $\chi_{\BA^+}\phi_p$ 
and $\phi_p$ are in $Cv_q(\BA)$ for all $q$ in $(1,\infty)$.  The convolution norm of these kernels
is controlled by $C \bignorm{m}{\Horm{\TWp}{N}}$.
\end{lemma}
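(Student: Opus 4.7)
The plan is to establish the pointwise bounds and the $\lu{\BA}$ estimate by direct spectral integration after shifting the contour into $T_p$, and then to deduce the $Cv_q$ estimates by verifying a Mihlin type condition on the Fourier transform of $\phi_p$. The main obstacle will be this last step: since the boundary value $\mbc(\la+i\de(p)\rho)$ grows polynomially like $(1+|\la|)^{(n-1)/2}$, $\wh{\phi_p}$ is not \emph{a priori} bounded and one must exploit the cancellation produced by the smooth cutoff $\Phi$.

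For the two pointwise bounds I would use the second formula in \eqref{f: phipI}, with $\rho^{p,\vep(a)}=(\de(p)-\vep/|\log a|)\rho$ and $\vep>0$ small and fixed. On the corresponding contour $\fra^*+i\rho^{p,\vep(a)}$ one has $\Theta_p(\la+i\rho^{p,\vep(a)})\asymp|\la|+\vep/|\log a|$, which stays bounded away from zero, so Leibniz's rule together with the H\"ormander bound on $m$ and the Harish--Chandra estimate \eqref{f: HCest} give
\begin{equation*}
\bigmod{\partial_\la^j \mbc(\la+i\rho^{p,\vep(a)})}
\leq C \bignorm{m}{\Horm{\TWp}{N}} \sum_{k=0}^j (1+|\la|)^{(n-1)/2-k}\bigl(|\la|+\vep/|\log a|\bigr)^{-(j-k)}.
\end{equation*}
Taking $j>(n+1)/2$ and integrating by parts $j$ times via $a^{i\la}=(i\log a)^{-j}\partial_\la^j a^{i\la}$, the integrand is integrable and, after splitting the $\la$-integral at $|\la|=\vep/|\log a|$, one finds that it is of order $|\log a|^{j-1}$. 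Combined with the prefactor $(\log a)^{-j}$ and the uniformly bounded $e^{|\rho|\vep\sign(\log a)}$ this yields $|\phi_p(a)|\leq C\bignorm{m}{\Horm{\TWp}{N}}(1-\Phi(a))/|\log a|$. The bound for $a\partial_a\phi_p$ is proved in the same way: the differentiation introduces an extra factor of $i\la$ inside the integral, which by Leibniz eats one power of $|\log a|$ in the splitting calculation and delivers $(\log a)^{-2}$ instead of $(\log a)^{-1}$.

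For the $\lu{\BA}$-bound on $\chi_{\BA^-}\phi_p$ I would instead use the unshifted definition \eqref{f: phip}. On $\BA^-$ the factor $a^{\de(p)\rho}=e^{-\de(p)|\rho|\,|\log a|}$ decays exponentially, and under the rapid-decrease hypothesis on $m$ the spectral integral $\int_{\fra^*}a^{i\la}\mbc(\la)\wrt\la$ is uniformly bounded in $a$. Hence $|\phi_p(a)|\leq C_m e^{-\de(p)|\rho|\,|\log a|}(1-\Phi(a))$ on $\BA^-$, which is integrable against the Haar measure of $\BA$.

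The convolution bound is the most delicate point. Identifying $\BA\cong\BR$ via $a=\exp(tH_0)$ and using the first formula in \eqref{f: phipI}, one computes in the sense of tempered distributions
\begin{equation*}
\wh{\phi_p}(\eta)=2\pi g(\eta)-(\wh\Phi\ast g)(\eta),\qquad g(\xi):=\mbc(\xi+i\de(p)\rho).
\end{equation*}
Since $\bC\equiv 1$ on $[-1,1]$, every derivative of $\Phi$ vanishes at the origin, so $\int\wh\Phi=2\pi$ while $\int\xi^k\wh\Phi(\xi)\wrt\xi=0$ for every $k\geq 1$. Rewriting $\wh{\phi_p}(\eta)=-\int\wh\Phi(\xi)[g(\eta-\xi)-g(\eta)]\wrt\xi$ and Taylor-expanding $g$ around $\eta$ to order $k=\lceil(n-1)/2\rceil+1$, the polynomial terms in $\xi$ integrate away by the vanishing moments, leaving only a remainder bounded by $C\bignorm{m}{\Horm{\TWp}{N}}$ uniformly on $|\eta|\geq 1$; the range $|\eta|<1$ is handled directly from the bound $|g(\xi)|\leq C\bignorm{m}{\Horm{\TWp}{N}}(1+|\xi|)^{(n-1)/2}$ and the Schwartz decay of $\wh\Phi$. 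Applying the same argument to $\partial_\eta\wh{\phi_p}$ gives $|\eta\,\partial_\eta\wh{\phi_p}(\eta)|\leq C\bignorm{m}{\Horm{\TWp}{N}}$, so $\wh{\phi_p}$ satisfies the Mihlin condition on $\BR$ and defines a bounded convolutor on $\laq{\BA}$ for every $q\in(1,\infty)$ with the claimed norm bound. Since $\chi_{\BA^-}\phi_p\in\lu{\BA}$ is trivially a convolutor on each $\laq{\BA}$, the bound for $\chi_{\BA^+}\phi_p=\phi_p-\chi_{\BA^-}\phi_p$ follows by subtraction.
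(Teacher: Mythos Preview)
Your pointwise estimates agree with the paper's (both defer to Ionescu's contour-shift and integration-by-parts computation), and the overall subtraction argument $\chi_{\BA^+}\phi_p=\phi_p-\chi_{\BA^-}\phi_p$ is the same.  Two points need attention.

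\textbf{The $\lu{\BA}$ bound on $\chi_{\BA^-}\phi_p$.}  Your argument uses the rapid decay of $m$ to bound the spectral integral by a constant $C_m$; this shows membership in $\lu{\BA}$ but not the quantitative control by $\bignorm{m}{\Horm{\TWp}{N}}$ that the lemma asserts (and that is needed for the subtraction step to give the right bound on $\chi_{\BA^+}\phi_p$).  The paper instead integrates by parts $N$ times in the \emph{unshifted} integral \eqref{f: phip}: since $\Theta_p(\la)\geq\de(p)\mod{\rho}$ on $\fra^*$, one has $\bigmod{\partial_\la^N\mbc(\la)}\leq C\bignorm{m}{\Horm{\TWp}{N}}(1+\mod{\la})^{(n-1)/2-N}$ there, giving $\bigmod{\phi_p(a)}\leq C\bignorm{m}{\Horm{\TWp}{N}}\,[1-\Phi(a)]\,a^{\de(p)\rho}$, which is integrable on $\BA^-$ with the right constant.

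\textbf{The $Cv_q$ bound.}  Your vanishing-moments argument for $\bigmod{\wh{\phi_p}(\eta)}\leq C\bignorm{m}{\Horm{\TWp}{N}}$ is correct and elegant, but ``the same argument'' does not apply to $\partial_\eta\wh{\phi_p}$ as written.  On the boundary line $\fra^*+i\de(p)\rho$ one has $\Theta_p=\mod{\la}$, so $g'(\xi)=\partial_\xi\mbc(\xi+i\de(p)\rho)$ behaves like $\mod{\xi}^{-1}$ near $0$; the integral $\int\wh\Phi(\xi)\,g'(\eta-\xi)\wrt\xi$ is therefore not absolutely convergent, and differentiating under the integral sign is not justified.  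The fix is to move the derivative onto $\wh\Phi$: write $\partial_\eta(\wh\Phi*g)=\wh\Phi'*g$ (this is a convolution of a Schwartz function with a locally bounded, polynomially growing $g$, hence absolutely convergent), observe that $\int\wh\Phi'=0$, $\int\xi\,\wh\Phi'=-2\pi$, and $\int\xi^k\wh\Phi'=0$ for $k\geq2$, and then run your Taylor/remainder argument with $\wh\Phi'$ in place of $\wh\Phi$.  The first-moment term produces exactly $2\pi g'(\eta)$, which cancels against the other summand in $\wh{\phi_p}'$.

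For comparison, the paper takes a different route for $Cv_q$: it writes $\phi_p=[1-\Phi]\,\Xi$ with $\Xi=\cF^{-1}g$, splits $g=\Psi g+(1-\Psi)g$ by a low/high-frequency cutoff $\Psi$, observes that $(1-\Psi)g$ produces, after $N$ integrations by parts, a kernel in $\lu{\BA}$, while $\Psi g$ is a genuine Mihlin multiplier on $\fra^*$ so $\Xi^\Psi\in Cv_q(\BA)$; finally it invokes the fact (from the Fig\`a-Talamanca--Herz theory) that $1-\Phi$ is a pointwise multiplier of $Cv_q(\BA)$.  This avoids the moment cancellation entirely and is the device the paper later generalises to the product setting.
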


\begin{proof}
The pointwise estimates, which are versions on $\BA$ of Calder\'on--Zygmund inequalities,
follow from repeated integration by parts as in \cite[p.~114--115]{I1}.  

According to the classical theory of singular integral operators, 
in order to conclude that $\phi_p$ is in $\Cvp{\BA}$,
one has to prove that $\phi_p$ is in $Cv_2(\BA)$.  This is implicit in \cite[p.~114-115]{I1}.
We provide a different proof, which we shall generalize later to the reduced case.
By \eqref{f: phipI}, $\phi_p (a) = [1-\Phi(a)]\, \Xi (a)$, where 
$
\ds\Xi(a)
:= \int_{\fra^*} a^{i\la} \, \mbc_{\de(p)\rho}(\la)\wrt\la.
$ 
Set $\Psi(\la) := \bC\bigl(\la(H_0)\bigr)$, where $\bC$ is the function defined slightly 
above formulae \eqref{f: kernels rank one}.
Write $(1-\Psi) + \Psi$ instead of $1$ inside the integral above.  Correspondingly, we have the 
decomposition $\Xi = \Xi^{1-\Psi} + \Xi^\Psi$.  
By integrating by parts $N$ times, we see that 
$$
\big[1-\Phi(a)\big] \, \bigmod{\Xi^{1-\Psi}(a)} 
\leq C \bignorm{m}{\Horm{T_p}{N}} \frac{1-\Phi(a)}{\mod{\log a}^N},
$$ 
which belongs to $\lu{\BA}$, hence to $Cv_q(\BA)$ for all $q$ in $(1,\infty)$.  

Since the Mellin transform of $\Xi^\Psi$ is in $\Horm{\fra^*}{N}$, $\Xi^{\Psi}$ is in $Cv_q(\BA)$
for all $q$ in $(1,\infty)$, by the classical Mihlin--H\"ormander theorem.  
It follows from \cite[Theorem~3.4]{Co} that $\Phi$ is a pointwise multiplier of $Cv_2(\BA)$, whence so is 
$1-\Phi$.  
Thus, we may conclude that $(1-\Phi)\, \Xi^{\Psi}$ is in $Cv_q(\BA)$ for all $q$ in $(1,\infty)$.    

Altogether, we have proved that $\phi_p$ is in $Cv_q(\BA)$ for all $q$ in $(1,\infty)$.  
By integrating by parts $N$ times in the integral in \eqref{f: phip}, we see that 
$$
\phi_p(a)
= \frac{1-\Phi(a)}{(i\log a)^N}\, a^{\de(p)\rho} \int_{\fra^*} a^{i\la} \, \partial_\la^N\mbc(\la)\wrt\la.
$$
Thus, 
$$
\begin{aligned}
\bigmod{\phi_p(a)}
& \leq C \bignorm{m}{\Horm{\TWp}{N}}\, \frac{1-\Phi(a)}{\mod{\log a}^N}\, a^{\de(p)\rho} \int_{\fra^*} 
	 \big(1+\mod{\la}\big)^{(n-1)/2-N} \wrt\la \\
& \leq C \bignorm{m}{\Horm{\TWp}{N}}\, [1-\Phi(a)] \, a^{\de(p)\rho}, 
\end{aligned}
$$
and $\ds \int_{\BA^-} \bigmod{\phi_p(a)} \dtt{a} C \bignorm{m}{\Horm{\TWp}{N}}$ is convergent, i.e.,  
$\phi_p\chi_{\BA^-}$ is in $\lu{\BA}$.  Since $\phi_p\chi_{\BA^+}$ is the difference of 
$\phi_p$ and $\phi_p\chi_{\BA^-}$, which are in $Cv_q(\BA)$ for all $q$ in $(1,\infty)$, so is $\phi_p\chi_{\BA^+}$.   
\end{proof}

\noindent
Also, it is convenient to defined the functions
$$
\begin{aligned}
\tau^{p,1} (vb)
	& := \chi_{\BA^+}(b) \, P(v)^{2/p} \, \big[\exp\big(E(v,b)H_0\big)-1\big] \, \phi_p\big([vb]_+\big) \\
\tau^{p,2} (vb)
	& := \chi_{\BA^+}(b) \, P(v)^{2/p} \, \big[\phi_p\big([vb]_+\big) - \phi_p(b) \big]\\
\tau^{p,3} (vb)
	& := \chi_{\BA^+}(b) \, P(v)^{2/p} \, \phi_p(b) \\
\cD(vb)
        & := b^{2\rho}
\end{aligned}
\quant v\in \OBN \quant b \in \BA.
$$
The following result is contained in \cite{I1}.  However, our proof differs from the original one
at some points, and the notation is also different.  This will be useful in the treatment of the reduced case.  

\begin{lemma} \label{l: one dim multipliers glob}
Suppose that $N> (n+3)/2$ and $1<p<2$.  Then there exists
a constant~$C$ such that for every $m$ in $\Horm{\TWp}{N}$ the following hold:
\begin{enumerate}
\item[\itemno1]
$\ds \int_{\BG} \bigmod{\ko(g)} \, \vp_{i\de(p)\rho}(g) \wrt g
\leq C \bignorm{m}{\Horm{T_p}{N}}$.  In particular, $\ko$ is in $\Cvp{\BX}$;
\item[\itemno2]
$\ku(a) = a^{-2\rho/p} \, \phi_p(a)$ and 
$\bigmod{\ku(vb)} \leq C \bignorm{m}{\Horm{T_p}{N}}\, b^{-2\rho/p}\, P(v)^{2/p}$
for all $a$ and $b$ in $\BA^+$;  
\item[\itemno3]
$\ds \int_{\OBN}\int_{\BA} \bigmod{[\chi_{\BA^-}\cD^{1/p}\ku](vb)} \wrt v  \dtt b  
\leq C \bignorm{m}{\Horm{T_p}{N}}$, i.e., $\chi_{\BA^-}\cD^{1/p}\ku$ is in $\lu{\OBN;\lu{\BA}}$;
\item[\itemno4]
we have the decomposition
$
\chi_{\BA^+}\cD^{1/p}\ku
= \tau^{p,1} + \tau^{p,2} + \tau^{p,3}.
$
The functions $\tau^{p,1}$ and $\tau^{p,2}$ are in $\lu{\OBN;\lu{\BA}}$, and $\tau^{p,3}$ 
is in $\lu{\OBN;\Cvp{\BA}}$, with norms controlled by $C \bignorm{m}{\Horm{T_p}{N}}$.  
Hence $\chi_{\BA^+}\cD^{1/p}\ku$ is in $\lu{\OBN;\Cvp{\BA}}$, and 
$
\bignorm{\chi_{\BA^+}\cD^{1/p}\ku}{\lu{\OBN;\Cvp{\BA}}}
\leq C \bignorm{m}{\Horm{T_p}{N}};
$ 
\item[\itemno5]
we have the estimate $\bignorm{(1-\Phi) \cH^{-1}m}{\Cvp{\BX}} \leq C \bignorm{m}{\Horm{T_p}{N}}$. 
\end{enumerate}
\end{lemma}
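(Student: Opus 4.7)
The plan is to follow Ionescu's strategy, using (i)--(iv) as the structural ingredients feeding into (v). The guiding identity is $(1-\Phi)\cH^{-1}m = 2\ku+2\ko$, recorded just after \eqref{f: kernels rank one}: the $\ko$-piece is handled directly on $\BX$ by Herz's majorizing principle, while the $\ku$-piece is transferred to the semidirect product $\OBN\BA$ via the principle of \cite{CMW} recalled in Section~\ref{s: Boundedness results for convolution operators}, which demands control on $\cD^{1/p}\ku$ in $L^1(\OBN;\Cvp{\BA})$.

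For (i), I would shift the contour in the defining integral of $\ko$ from $\fra^*$ to $\fra^*+i\de(p)\rho$. Analyticity of $m$ on $\TWp$ legitimises the shift, and \eqref{f: HCest}, \eqref{f: estIonescu} together with the prefactor $a^{-2\al}$ make the shifted integral absolutely convergent and yield a pointwise bound of the form $\bigmod{\ko(a)} \leq C \bignorm{m}{\Horm{T_p}{N}}\, (1-\Phi(a))\, a^{-(1+\de(p))\rho-2\al}$. Computing $\int_\BG \bigmod{\ko}\, \vp_{i\de(p)\rho}\wrt g$ in Cartan coordinates, with $\de(a)\asymp a^{2\rho}$ for large $a$ and the Harish-Chandra expansion \eqref{f: HC expansion} of $\vp_{i\de(p)\rho}$, the exponents balance so that the integral on $\BA^+$ converges; Herz's principle then gives $\ko\in\Cvp{\BX}$. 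Part (ii) is a direct comparison of \eqref{f: phip} and \eqref{f: kernels rank one} using $1+\de(p)=2/p$ (for $1<p<2$), followed by the Iwasawa factorisation \eqref{f: decom Iwas}, which gives $[vb]_+^{-2\rho/p} = b^{-2\rho/p}\,P(v)^{2/p}\,\e^{-cE(v,b)}$ for some $c>0$; combining with the sup bound on $\phi_p$ from Lemma~\ref{l: phip basic} and with $E(v,b)\geq 0$ produces the stated pointwise estimate.

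The heart of the argument is (iv). Substituting the Iwasawa expression into $\chi_{\BA^+}\cD^{1/p}\ku(vb) = \chi_{\BA^+}(b)\,P(v)^{2/p}\,\e^{-cE(v,b)}\,\phi_p([vb]_+)$ and telescoping as
\[
\e^{-cE(v,b)}\,\phi_p([vb]_+) = [\e^{-cE(v,b)}-1]\,\phi_p([vb]_+) + [\phi_p([vb]_+)-\phi_p(b)] + \phi_p(b)
\]
produces the three pieces $\tau^{p,1},\tau^{p,2},\tau^{p,3}$. The $\lu{\OBN;\lu{\BA}}$-norm of $\tau^{p,1}$ is controlled by $\bigmod{\e^{-cE(v,b)}-1}\leq CE(v,b)\leq Cb^{-2}$ (from \eqref{f: decom Iwas}), the boundedness of $\phi_p$, and $P^{2/p}\in\lu{\OBN}$. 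For $\tau^{p,2}$ the fundamental theorem of calculus along the $\BA$-leg of the Iwasawa twist, the derivative bound $\bigmod{a\partial_a\phi_p(a)}\leq C\bignorm{m}{\Horm{T_p}{N}}(1-\Phi(a))/\log^2 a$ from Lemma~\ref{l: phip basic}, and the integrability of $H(v)P(v)^{2/p}$ over $\OBN$ give the bound. The term $\tau^{p,3}(vb) = \chi_{\BA^+}(b)P(v)^{2/p}\phi_p(b)$ is handled by Lemma~\ref{l: phip basic}: $\chi_{\BA^+}\phi_p\in\Cvp{\BA}$ and $P^{2/p}\in\lu{\OBN}$ give $\tau^{p,3}\in\lu{\OBN;\Cvp{\BA}}$. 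Part (iii) is simpler: on $\BA^-$ the factor $b^{2\rho/p}$ is exponentially decaying, so combining with $\bigmod{\phi_p([vb]_+)}\leq C\bignorm{m}{\Horm{T_p}{N}}$ and $P^{2/p}\in\lu{\OBN}$ yields the $\lu{\OBN;\lu{\BA}}$-bound after elementary integration.

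Finally, for (v) the transference principle of \cite{CMW} applied to the $\lu{\OBN;\Cvp{\BA}}$-bound on $\cD^{1/p}\ku$ obtained from (iii) and (iv) gives $\bignorm{\ku}{\Cvp{\BX}}\leq C\bignorm{m}{\Horm{T_p}{N}}$. Combined with (i) for $\ko$ and the decomposition $(1-\Phi)\cH^{-1}m = 2\ku+2\ko$, this yields the claim. The main obstacle is (iv): the uniform $\lu{\OBN;\lu{\BA}}$-control of $\tau^{p,1}$ and $\tau^{p,2}$ hinges on both the derivative bound from Lemma~\ref{l: phip basic} and on the integrability of $H(v)P(v)^{2/p}$ over $\OBN$, which is where the Iwasawa geometry of $\BG$ enters non-trivially and where Ionescu's original argument requires the most care.
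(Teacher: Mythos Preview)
Your overall architecture matches the paper's, and parts \rmii, \rmiv\ and \rmv\ are essentially what the paper does. However, your treatment of \rmiii\ has a genuine gap, and \rmi\ needs more care.

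For \rmiii, you write that on $\BA^-$ the factor $b^{2\rho/p}$ decays, and invoke $P^{2/p}\in\lu{\OBN}$ to finish. But where does $P(v)^{2/p}$ come from when $b\in\BA^-$? The Iwasawa--Cartan relation \eqref{f: decom Iwas}, which produces the factorisation $[vb]_+^{-2\rho/p}=b^{-2\rho/p}P(v)^{2/p}\e^{-cE(v,b)}$, is only valid for $b\in\BA^+$. For $b\in\BA^-$ there is no such clean factorisation, and the crude bound $[vb]_+^{-2\rho/p}\bigmod{\phi_p([vb]_+)}\leq C$ leaves you with $\int_{\OBN}\wrt v=\infty$. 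The paper circumvents this with an Abel transform trick: since $\ku$ is $\BK$--bi-invariant, $\cA(\mod{\ku})$ is Weyl-invariant on $\BA$, so one may replace $\cA(\mod{\ku})(b)$ by $\cA(\mod{\ku})(b^{-1})$ in the $\BA^-$-integral and then change variables $b\mapsto b^{-1}$. This converts the integral into one over $\BA^+$, where \eqref{f: decom Iwas} \emph{does} apply and the $P(v)^{2/p}$ factor legitimately appears. This Weyl-reflection step is the missing idea.

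For \rmi, the shifted integral is \emph{not} absolutely convergent as you claim: after shifting, the integrand behaves like $(1+\mod{\la})^{(n-1)/2}$, which diverges. One must integrate by parts $N$ times to gain $(1+\mod{\la})^{-N}$. Moreover, shifting all the way to $\fra^*+i\de(p)\rho$ makes $\Theta_p(\la+i\de(p)\rho)=\mod{\la}$, so the H\"ormander bound on $\partial^N m$ blows up like $\mod{\la}^{-N}$ near $\la=0$ after integration by parts. The paper instead shifts only to $\fra^*+i(\de(p)-\vep)\rho$ for small $\vep>0$, which keeps $\Theta_p$ bounded below and yields the pointwise estimate \eqref{f: pointwise est kappaominfty} with an extra $(\log a)^{-N}$ and a harmless $a^{\vep\rho}$ penalty in the exponent.
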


\begin{proof}
We sketch the proof of the lemma; this will be useful in the reduced case.  For all the details, see \cite{I1}.  
Notice that \rmi\ follows from the pointwise estimate
\begin{equation} \label{f: pointwise est kappaominfty}
\bigmod{\ko(a)} 
\leq C \bignorm{m}{\Horm{T_p}{N}} \frac{1-\Phi(a)}{(\log a)^N} \,\, a^{(\vep-2/p)\rho-2\al} 
\quant a \in \BA^+,
\end{equation}
which holds for each small $\vep>0$, and from standard estimates of the spherical function $\vp_{i\de(p)\rho}$.

To prove \rmii\ we start from the definition of $\ku$ (see \eqref{f: kernels rank one}) 
and move the contour of integration from $\fra^*$ to  $\fra^* +  i\de(p)\rho$.  To prove 
the estimate of $\ku$, simply write $\ku$ in Iwasawa co-ordinates, use \eqref{f: decom Iwas}
and the boundedness of $\phi_p$, which follows from the first estimate in Lemma~\ref{l: phip basic}.   

Part \rmiii\ follows from a clever argument involving the Abel transform.  
Recall that the Abel transform of a function $F$ on $\OBN\BA$ is defined by 
$\cA F(b) := b^\rho \,\int_{\OBN} \, F(vb) \wrt v$ for every $b$ in $\BA$.  Thus, 
$$
\int_{\OBN}\int_{\BA} \bigmod{[\chi_{\BA^-}\cD^{1/p}\ku](vb)} \wrt v  \dtt b  
= \int_{\BA^-}  b^{((2/p)-1)\rho}\,\cA \big(\mod{\ku}\big)(b)  \dtt b.
$$
Since the Abel transform of a $\BK$--bi-invariant function is a Weyl-invariant function on $\BA$,
we may write $\cA \big(\mod{\ku}\big)(b^{-1})$ instead of $\cA \big(\mod{\ku}\big)(b)$ 
in the last integral above, and then change variables ($b_1 = b^{-1}$):  we obtain 
$$
\int_{\OBN}\int_{\BA} \bigmod{[\chi_{\BA^-}\cD^{1/p}\ku](vb)} \wrt v  \dtt b  
= \int_{\BA^+}  b_1^{-\de(p)\rho}\,\cA \big(\mod{\ku}\big)(b_1) \dtt {b_1}.
$$
We deduce from \rmii\ that 
$\cA \big(\mod{\ku}\big)(b_1)
\leq C \bignorm{m}{\Horm{T_p}{N}}\, b_1^{-\de(p)\rho}\, P(v)^{2/p}$.  We insert this estimate in the
last integral above and obtain $\int_{\BA^+}  b_1^{-2\de(p)\rho}\,\dtt {b_1}$.  
Since $1<p<2$, and $-\de(p) = {1/p'-1/p}$, the last integral is convergent, and the desired estimate follows.  

The decomposition of $\chi_{\BA^+}\cD^{1/p}\ku$ in \rmiv\ is a trivial consequence of 
its definition and of formula \eqref{f: decom Iwas} that relates the Cartan and the Iwasawa decompositions 
of elements of the form $vb$, with $v$ in $\OBN$ and $b$ in $\BA$.

To prove that $\tau^{p,1}$ is in $\lu{\OBN;\lu{\BA}}$ it suffices to observe that $\phi_p$ is
bounded by Lemma~\ref{l: phip basic}, $P^{2/p}$ is in $\lu{\OBN}$, and that $\bigmod{\exp\big(E(v,b)H_0\big)-1} \leq C\,b^{-2}$
for all $b$ in $\BA^+$.  

Next we show that $\tau^{p,2}$ is in $\lu{\OBN;\lu{\BA}}$.  We need to estimate 
$$
\begin{aligned}
\int_{\BA} \int_{\OBN} \, \bigmod{\tau^{p,2}(vb)} \wrt v \dtt b
& \leq \int_{\BA^+} \int_{\OBN} \, P(v)^{2/p} \, \bigmod{\phi_p([vb]_+)-\phi_p(b)} \wrt v \dtt b \\ 
& \leq \int_{\BA^+}\dtt b \int_{\OBN} \wrt v \, P(v)^{2/p} \, \int_b^{[vb]_+} 
	\bigmod{a\partial_a \phi_p(a)}\dtt a. 
\end{aligned}
$$
We use the estimate for $a\partial_a \phi_p$ in Lemma~\ref{l: phip basic} to obtain that the inner integral above is
bounded by 
$$
\begin{aligned}
C \bignorm{m}{\Horm{\TWp}{N}}\, \int_b^{[vb]_+} \frac{1-\Phi(a)}{\log^2 a} \dtt a 
& =    C \bignorm{m}{\Horm{\TWp}{N}}\, \int_b^{[vb]_+} \frac{1-\Phi(a)}{\log^2a} \dtt a \\ 
& \leq C \bignorm{m}{\Horm{\TWp}{N}}\, \int_b^{[vb]_+} \frac{1}{1+\log^2 a} \dtt a \\ 
& \leq C \bignorm{m}{\Horm{\TWp}{N}}\, \frac{\log [vb]_+-\log b}{1+\log^2 b}. 
\end{aligned}
$$
We observe that, by \eqref{f: decom Iwas}, $\log [vb]_+-\log b \leq \al\big(H(v)\big)+2$ for $b$ in~$\BA^+$.  
By combining the estimates above we see that 
$$
\begin{aligned}
\int_{\BA} \int_{\OBN} \, \bigmod{\tau^{p,2}(vb)} \wrt v \dtt b
& \leq C \bignorm{m}{\Horm{\TWp}{N}}\, \int_{\BA^+} \frac{1}{1+\log^2 b}\dtt b\,
	 \int_{\OBN} \, P(v)^{2/p} \, \big[\al\big(H(v)\big)+2\big]\wrt v; 
\end{aligned}
$$
the integrals on the right hand side are convergent, and the required estimate of $\tau^{p,2}$ follows.  

Finally, we may use the fact $\chi_{\BA^+} \phi_p$ is in $\Cvp{\BA}$ (see Lemma~\ref{l: phip basic}) and
apply directly Theorem~\ref{t: Transference principle}, 
and show that $\tau^{p,3}$ is in $\lu{\OBN;\Cvp{\BA}}$, thereby concluding the proof of \rmiv.  

As for the proof of \rmv, observe that 
$\chi_{\BA^-}\cD^{1/p}\ku$ is in $\lu{\OBN;\lu{\BA}}$ by \rmiii, and 
$\chi_{\BA^+}\cD^{1/p}\ku$ is in $\lu{\OBN;\Cvp{\BA}}$ by \rmiv, whence 
$\cD^{1/p}\ku$ is in $\lu{\OBN;\Cvp{\BA}}$.  
Theorem~\ref{t: Transference principle} then implies that $\ku$ is in $\Cvp{\OBN\BA}$.
This, in turn, implies that $\ku$ is in $\Cvp{\BX}$, for $\ku$ 
is a $\BK$-invariant function on $\BX$.  It is not hard to check that the steps above yield
$\bignorm{\ku}{\Cvp{\BX}} \leq C \bignorm{m}{\Horm{T_p}{N}}$. This and \rmi\ then ensure that 
$(1-\Phi) \cH^{-1}m$ is in $\Cvp{\BX}$ with the required control of the norm.  
\end{proof}

\subsection{The case where $\BX$ is reducible}  \label{s: BX reducible}
In this paper we are interested in the particular case where $\BG = \BG_1\times \BG_2$, where
$\BG_1$ and $\BG_2$ are noncompact semisimple Lie groups with finite
centre and real rank one.  Denote by $\BK_1$ and $\BK_2$ maximal compact subgroups of $\BG_1$
and $\BG_2$, respectively, and consider the associated Riemannian symmetric
spaces of the noncompact type $\BX_1 = \BG_1/\BK_1$ and $\BX_2 = \BG_2/\BK_2$ with
dimensions $n_1$ and $n_2$, respectively.
Then $\BX_1\times \BX_2$ is a (reducible) symmetric space of noncompact type,
which hereafter we shall denote by $\BX$; $\BX$ may also be regarded as
the symmetric space $\BG/\BK$, where $\BG = \BG_1 \times \BG_2$ and
$\BK = \BK_1\times \BK_2$. 

Recall the Iwasawa decompositions $\BG_1 = \BN_1 \BA_1 \BK_1$ and $\BG_2 = \BN_2 \BA_2 \BK_2$; 
an Iwasawa decomposition of $\BG$ is then 
$\BG= \BN\BA\BK$, where $\BN = \BN_1\times\BN_2$, $\BA = \BA_1\times\BA_2$.
Note that $\BA$ is two dimensional.  We denote by $\cD_1$ and $\cD_2$ the Radon--Nikodym
derivatives of the actions of $\BA_1$ on $\OBN_1$ and of $\BA_2$ on $\OBN_2$, respectively.  
Explicitly, $\cD_1(b_1) = b_1^{2\rho_1}$ and $\cD_1(b_2) = b_2^{2\rho_2}$.  Sometimes we abuse the notation 
and denote still by $\cD_1$ and $\cD_2$ the natural extensions of $\cD_1$ and $\cD_2$ to 
$\OBN_1\BA_1$ and $\OBN_2\BA_2$, respectively.  
The Lie algebra $\fra$ of $\BA$ is two-dimensional, and the root
system is of type $A_1\times A_1$.  Thus, $\fra^+$ may be identified
with the first quadrant of $\BR^2$ via the identification $(\la_1,\la_2) \equiv \big(\la_1(H_0^{(1)}), \la_2(H_0^{(2)})\big)$,
where $H_0^{(1)}$ and $H_0^{(2)}$ are the analogues for $\BX_1$ and $\BX_2$ of the vector $H_0$ defined in the rank one case.  If $a$ is in $\BA$ and $\la$ belongs to $\fra^*$, we 
	write $a^\la$ instead of $\e^{\la_1(\log a_1)}\e^{\la_2(\log a_2)}$.
Set $\BA^+ := \exp{(\fra^+)}$, and notice that $\BA^+ = \BA_1^+ \times \BA_2^+$. 

The Weyl group $W$ of $(\BG,\BK)$ is then generated by the reflections
with respect to the co-ordinate axes in $\BR^2$.
The interior $\bW$ of the convex hull of the points $\{w\cdot \rho: w \in W\}$
is an open rectangle in $\fra^*$, with sides parallel
to the axes.  The vertex in $\fra^+$ of this rectangle is the vector $\rho$;
its co-ordinates are $(\rho_1,\rho_2)$, where $\rho_1$ and $\rho_2$ are the half-sums of
the positive roots with multiplicities of the pairs $(\frg_1,\fra_1)$
and $(\frg_2,\fra_2)$, respectively.  The tube $\TWp$ is then the product
$\TWp^{(1)}\times \TWp^{(2)}$, where $\TWp^{(1)} = \fra_1^* + i \bW_p^{(1)}$
and $\bW_p^{(1)}$ denotes the convex hull of the orbits of $\rho_1$ under the Weyl
group $W_1$; $\TWp^{(2)}$ is defined similarly.

Bounded spherical functions are then indexed by elements of $\TWu$.  For $\la$
in $\TWu$,
$
\vp_{(\la_1,\la_2)}
= \vp^1_{\la_1}\otimes \vp^2_{\la_2},
$
where $\vp_{\la_1}^1$ and $\vp_{\la_2}^2$ are spherical functions on $\BG_1$ and
$\BG_2$, respectively.
Also the Harish-Chandra function and the Plancherel measure on $\BX$ are given by
$\bc = \bc_1\otimes \bc_2$ and $\nu = \nu_1\otimes \nu_2$.
Note that the density of the measure in Cartan co-ordinates is just $\de = \de_1\otimes \de_2$,
where $\de_1$ and $\de_2$ denote the corresponding densities on $\BX_1$ and $\BX_2$, respectively.  

\subsection{The Fig\`a-Talamanca--Herz space}
Denote by $\Ga$ a noncompact abelian Lie group and suppose that $p$ is in $(1,\infty)$.  The space $A_p(\Ga)$
is the space of all continuous functions on $\Ga$ vanishing at infinity that admit a representation of the form
\begin{equation} \label{f: def FTH} 
f
= \sum_j f_j*g_j,
\end{equation} 
where $\{h_j\}$ and $\{g_j\}$ are sequences of functions in $\lp{\Ga}$ and $L^{p'}(\Ga)$, respectively,
such that 
\begin{equation} \label{f: Ap norm}
\sum_j \bignorm{h_j}{\lp{\Ga}} \, \bignorm{g_j}{L^{p'}(\Ga)} < \infty.
\end{equation} 
The norm of $f$ is then the infimum of \eqref{f: Ap norm} over all representation of $f$ of the form \eqref{f: def FTH}.  
The information we shall need on $A_p(\Ga)$ is contained in the following result.

\begin{theorem} \label{t: Ap spaces}
Suppose that $\Ga$ is a noncompact abelian Lie group and that $1<p<\infty$.  The following hold:
\begin{enumerate}
\item[\itemno1]
the Banach dual of $A_p(\Ga)$ is $\Cvp{\Ga}$;
\item[\itemno2]
$A_p(\Ga)$ is an algebra under pointwise multiplication, containing $C_c^\infty(\Ga)$;
\item[\itemno3]
elements of $A_p(\Ga)$ are pointwise multipliers of $\Cvp{\Ga}$, i.e. $\phi$ in $A_p(\Ga)$ and 
$k$ in $\Cvp{\Ga}$ imply that $\phi\, k$ is in $\Cvp{\Ga}$;
\item[\itemno4]
if $\Ga = \Ga_1\times\Ga_2$, where $\Ga_1$ and $\Ga_2$ are noncompact abelian groups, and $\phi_1$ is in $C_c^\infty(\Ga_1)$,
then $\phi_1\otimes \One$ and $(1-\phi_1)\otimes \One$ are in $A_p(\Ga_1\times\Ga_2)$.
\end{enumerate}
\end{theorem}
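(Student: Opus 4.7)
The plan is to deduce (i)--(iii) from Herz's classical theory of the Figà--Talamanca--Herz algebra, and to treat (iv) as a tensorisation claim about pointwise multipliers.

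For (i), the natural pairing $\langle T, f\rangle := \sum_j \int_\Ga (Tf_j)(x) \,\check g_j(-x) \wrt x$ on representations $f=\sum_j f_j*g_j$ satisfies $|\langle T,f\rangle|\leq \|T\|_{\Cvp{\Ga}}\sum_j\|f_j\|_p\|g_j\|_{p'}$, so taking the infimum over representations embeds $\Cvp{\Ga}$ isometrically into $A_p(\Ga)^\ast$. Surjectivity is obtained by Hahn--Banach: any bounded linear functional on $A_p(\Ga)$ lifts to the projective tensor product $\lp{\Ga}\hat\otimes L^{p'}(\Ga)$, and under the duality $\lp{\Ga}^{\ast}=L^{p'}(\Ga)$ corresponds to a bounded operator on $\lp{\Ga}$; invariance of convolution under translations forces this operator to commute with translations, hence to lie in $\Cvp{\Ga}$. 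Part (iii) is then essentially immediate from (i) and (ii): one defines $\phi T$ via $\langle \phi T,f\rangle:=\langle T,\phi f\rangle$ for $f \in A_p(\Ga)$, and the norm bound $\|\phi T\|_{\Cvp{\Ga}}\leq \|\phi\|_{A_p(\Ga)}\|T\|_{\Cvp{\Ga}}$ follows since $\phi f\in A_p(\Ga)$ with controlled norm by (ii).

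For (ii), the inclusion $C_c^\infty(\Ga)\subset A_p(\Ga)$ is obtained by writing $\psi \in C_c^\infty(\Ga)$ as a convolution $\psi=\psi_1*\psi_2$ with $\psi_1,\psi_2 \in C_c^\infty(\Ga)\subset \lp{\Ga}\cap L^{p'}(\Ga)$ (e.g.\ $\psi_1=\psi*\eta$ and $\psi_2=\eta$ for a suitable smooth approximate identity $\eta$ whose support is adapted to $\supp\psi$), which is a representation as in \eqref{f: def FTH} with a single term. The pointwise multiplication property is Herz's deeper result; the plan is to prove it first on the dense subspace of finite sums $\sum_{j=1}^N f_j*g_j$ via elementary manipulations based on H\"older's inequality and $\lp$--$L^{p'}$ duality, then extend by density using the norm \eqref{f: Ap norm} together with the already-established duality~(i).

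The main obstacle is (iv): as $\mathbf{1}$ does not vanish at infinity, $\phi_1\otimes\One$ is not literally an element of the $C_0$-space of representations \eqref{f: def FTH}, and the statement must be read in the sense of the pointwise multiplier algebra of $\Cvp{\Ga_1\times\Ga_2}$, which by (iii) is how $A_p$ is effectively used in the sequel. The argument is tensorial: by (ii) applied to $\Ga_1$ we have $\phi_1 \in C_c^\infty(\Ga_1)\subset A_p(\Ga_1)$, so by (iii) on $\Ga_1$ the operator $M_{\phi_1}$ is a pointwise multiplier of $\Cvp{\Ga_1}$; tensoring with the identity on $\lp{\Ga_2}$ gives a bounded operator on $\lp{\Ga_1\times\Ga_2}=\lp{\Ga_1}\hat\otimes\lp{\Ga_2}$ that commutes with translations on $\Ga_1\times\Ga_2$, and its Fourier symbol is $\phi_1\otimes\One$. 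The statement for $(1-\phi_1)\otimes\One$ follows from linearity, since the constant $\One\otimes\One$ trivially multiplies every convolutor; the norm control is $\|\phi_1\|_{A_p(\Ga_1)}$ in both cases.
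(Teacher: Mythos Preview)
The paper does not actually prove this theorem: it attributes \rmi\ to Fig\`a-Talamanca, \rmii\ to Herz, and refers to Cowling \cite{Co} for \rmiv\ and for references concerning \rmi--\rmiii. Your sketches of \rmi--\rmiii\ are in line with the classical arguments, and your observation that \rmiv\ cannot be read literally (since $\phi_1\otimes\One$ does not vanish at infinity on $\Ga_1\times\Ga_2$, hence is not in $A_p(\Ga_1\times\Ga_2)$ as defined, but must be interpreted as an element of the pointwise multiplier algebra $B_p(\Ga_1\times\Ga_2)$) is correct and matches exactly how the result is used in the body of the paper.

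However, your argument for \rmiv\ has a genuine gap. The identity $\lp{\Ga_1\times\Ga_2}=\lp{\Ga_1}\hat\otimes\lp{\Ga_2}$ (projective tensor product) is false for $p\neq 1$; the correct identification is with the Bochner space $L^p(\Ga_1;\lp{\Ga_2})$. More seriously, the logic of ``tensoring $M_{\phi_1}$ with the identity on $\lp{\Ga_2}$'' conflates two different objects: $M_{\phi_1}$ as a map $\Cvp{\Ga_1}\to\Cvp{\Ga_1}$ (which is what \rmiii\ gives you) is not an operator on $\lp{\Ga_1}$, so there is nothing to tensor with $I_{\lp{\Ga_2}}$. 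What you must show is that for \emph{every} $k\in\Cvp{\Ga_1\times\Ga_2}$ (not just tensor products $k_1\otimes k_2$) the product $(\phi_1\otimes\One)k$ is again in $\Cvp{\Ga_1\times\Ga_2}$; your tensorisation does not touch the generic case.

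The route taken in \cite{Co} works on the predual side: one shows directly that $\phi_1\otimes\One$ multiplies $A_p(\Ga_1\times\Ga_2)$ into itself with controlled norm, and then invokes the duality \rmi. The key step is a vector-valued version of Herz's multiplication theorem, obtained via Grothendieck's theory of topological tensor products: writing $\lp{\Ga_1\times\Ga_2}=L^p(\Ga_1;\lp{\Ga_2})$ and $L^{p'}(\Ga_1\times\Ga_2)=L^{p'}(\Ga_1;L^{p'}(\Ga_2))$, a single term $h*g$ in $A_p(\Ga_1\times\Ga_2)$ is a $\Ga_1$-convolution of vector-valued functions, and the scalar action of $\phi_1\in A_p(\Ga_1)$ on such convolutions is bounded by the same mechanism that proves \rmii\ on $\Ga_1$. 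This is precisely the content of \cite[Theorem~3.4]{Co}, which the paper cites at each point where \rmiv\ is invoked.
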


\noindent
Part \rmi\ is a well known result of Fig\`a-Talamanca in the abelian case; \rmii\ was proved by Herz.
We refer the reader to \cite{Co} for the proof of \rmiv\ and for references concerning \rmi-\rmiii.

\section{Boundedness results for convolution operators}
\label{s: Boundedness results for convolution operators}

In this section we group together various boundedness results for convolution operators
which will be used in the proof of our main result.
Let $\Ga$ be a locally compact group with left Haar measure $\wrt y$, and modular function $\Delta_\Ga$.  
Convolution of $f$ and $k$ on $\Ga$ is defined by
$$
f*k(x)
= \int_\Ga f(xy) \, k(y^{-1}) \wrt y
= \int_\Ga f(y) \, k(y^{-1}x) \wrt y.
$$
Throughout the paper $\Cvp{\Ga}$ will denote the space of bounded operators on $\lp{\Ga}$
that commute with \emph{left} translations (equivalently the space of \emph{right} convolutors), 
equipped with the operator norm on $\lp{\Ga}$.
We recall the following basic convolution inequality \cite[Corollary 20.14 (ii) and (iv)]{HR} 
\begin{equation} \label{f: second convolution}
\bignorm{\kappa}{\Cvp{\Ga}}
\leq \bignorm{\Delta_\Ga^{-1/p'}\kappa}{\lu{\Ga}}.
\end{equation}
Notice that \eqref{f: second convolution} may be rewritten as 
\begin{equation*} 
\bignorm{\kappa}{\Cvp{\Ga}}
\leq \bignorm{\Delta_\Ga^{1/p}\kappa}{\lu{\Ga,\rho}},
\end{equation*}
where $\rho$ denotes the right Haar measure on $\Ga$ corresponding to the chosen left Haar measure.  
Consider now two locally compact groups $\Ga_1$ and $\Ga_2$, with modular functions $\Delta_{\Ga_1}$
and $\Delta_{\Ga_2}$, respectively.  Set $\Ga := \Ga_1\times \Ga_2$.
We shall repeatedly use the following simple product variant of \eqref{f: second convolution}.

\begin{lemma} \label{l: iterated conv}
The following hold:
\begin{enumerate}
\item[\itemno1]
$\bignorm{\kappa}{\Cvp{\Ga}} 
\leq \bignorm{\kappa}{\Cvp{\Ga_1;\Cvp{\Ga_2}}}
\leq \bignorm{(\Delta_{\Ga_1}^{-1/p'}\otimes 1)\kappa}{\lu{\Ga_1;\Cvp{\Ga_2}}}$.
\end{enumerate}

\noindent
Suppose further that 
$\Ga_1$ and $\Ga_2$ are noncompact semisimple Lie groups, $K_1$ and $K_2$ are maximal compact subgroups
of $\Ga_1$ and $\Ga_2$, and set $K:= K_1\times K_2$.  The following hold:  
\begin{enumerate}
\item[\itemno2]
if $g_1\mapsto \bignorm{\kappa(g_1,\cdot)}{\Cvp{\Ga_2}}$ is $K_1$--bi-invariant, 
then 
$$
\bignorm{\kappa}{\Cvp{\Ga}}
\leq C \, \int_{\Ga_1} \bignorm{\kappa(g_1,\cdot)}{\Cvp{\Ga_2}} \, \vp_{i\de(p) \rho_1}(g_1) \wrt g_1;
$$
\item[\itemno3]
if $\kappa$ is $K$--bi-invariant, 
then 
$$
\bignorm{\kappa}{\Cvp{\Ga/K}}
\leq C \, \int_{\Ga_1} \bignorm{\kappa(g_1,\cdot)}{\Cvp{\Ga_2/K_2}} \, \vp_{i\de(p) \rho_1}(g_1) \wrt g_1. 
$$
\end{enumerate}
\end{lemma}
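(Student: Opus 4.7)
The plan is to prove the three parts in sequence, reducing product-group convolution to iterated convolution via Fubini and then invoking Herz's majorizing principle in its Banach-space-valued form.

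For \rmi, I would first identify $\lp{\Ga}$ with $\lp{\Ga_1;\lp{\Ga_2}}$ via Fubini. Under this identification, right convolution on $\Ga$ with $\kappa$ factors as convolution in the first variable with the $\Cvp{\Ga_2}$-valued kernel $h_1\mapsto\kappa(h_1,\cdot)$, acting on $\lp{\Ga_2}$-valued $L^p$-functions on $\Ga_1$. The first inequality is immediate from this factorization. For the second inequality, I would apply \eqref{f: second convolution} with target Banach space $B := \Cvp{\Ga_2}$; the proof of \eqref{f: second convolution} relies only on Minkowski's integral inequality and a modular-function change of variable, and carries over verbatim to Bochner integrals with values in any Banach space.

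For \rmii, I would combine \rmi\ with the pointwise Minkowski-type domination
\[
\bignorm{\kappa}{\Cvp{\Ga_1;\Cvp{\Ga_2}}}
\leq \bignorm{\bignorm{\kappa(\cdot,\cdot)}{\Cvp{\Ga_2}}}{\Cvp{\Ga_1}},
\]
valid since for $f\in\lp{\Ga_1;\lp{\Ga_2}}$, Minkowski's integral inequality in the second variable gives the pointwise estimate $\bignorm{(\kappa*f)(g_1,\cdot)}{\lp{\Ga_2}} \leq \bigl(\bignorm{\kappa(\cdot,\cdot)}{\Cvp{\Ga_2}} *_{\Ga_1} \bignorm{f(\cdot,\cdot)}{\lp{\Ga_2}}\bigr)(g_1)$. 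Since the scalar function $g_1\mapsto \bignorm{\kappa(g_1,\cdot)}{\Cvp{\Ga_2}}$ is $K_1$-bi-invariant by hypothesis, the classical Herz majorizing principle on the noncompact semisimple group $\Ga_1$ then yields
\[
\bignorm{\bignorm{\kappa(\cdot,\cdot)}{\Cvp{\Ga_2}}}{\Cvp{\Ga_1}}
\leq C\int_{\Ga_1} \bignorm{\kappa(g_1,\cdot)}{\Cvp{\Ga_2}}\,\vp_{i\de(p)\rho_1}(g_1)\wrt g_1.
\]

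For \rmiii, the $K$-bi-invariance of $\kappa$ makes each $\kappa(g_1,\cdot)$ a $K_2$-bi-invariant kernel, and the induced convolution operator descends to $\lp{\Ga/K}\cong\lp{\BX_1;\lp{\BX_2}}$. The argument of \rmii\ then adapts with $\Cvp{\Ga_2}$ replaced throughout by $\Cvp{\Ga_2/K_2}$: the iterated-convolution reduction of \rmi\ has an obvious analogue on the quotient, the Minkowski domination step is identical, and the scalar Herz majorizing principle applies to the $K_1$-bi-invariant function $g_1\mapsto \bignorm{\kappa(g_1,\cdot)}{\Cvp{\Ga_2/K_2}}$ on $\Ga_1/K_1 = \BX_1$. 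I expect the main technical point to be the vector-valued Minkowski domination; this is routine once one notes that in the Bochner setting the pointwise operator norm of the kernel controls its convolution norm.
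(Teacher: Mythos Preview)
Your proposal is correct and follows essentially the same strategy as the paper's proof: Fubini plus Minkowski's integral inequality reduce the product convolution to a scalar convolution on $\Ga_1$ between $g_1\mapsto\bignorm{f_{g_1}}{\lp{\Ga_2}}$ and $g_1\mapsto\bignorm{\kappa_{g_1}}{\Cvp{\Ga_2}}$, and then one applies either \eqref{f: second convolution} (for \rmi) or Herz's \textit{principe de majoration} (for \rmii), with \rmiii\ following from \rmii\ and the preservation of $K$-right-invariance under convolution with $K$-bi-invariant kernels. The only cosmetic difference is that the paper phrases the reduction as a scalar convolution from the outset, rather than invoking a Banach-valued form of \eqref{f: second convolution}; the content is the same.
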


\begin{proof} 
First we prove \rmi. For a function $f$ on $\Ga$ and $h_1$ in $\Ga_1$ 
we let $f_{h_1}(h_2)=f(h_1,h_2)$ for every $h_2$ in $\Ga_2$. Fubini's theorem and Minkowski's integral inequality imply that
$$
\begin{aligned}
\bignorm{f*\kappa}{\lp{\Ga}}
& =   \Big[\int_{\Ga_1} \wrt g_1\Bignormto{\int_{\Ga_1} [f_{g_1 h_1}*_{\Ga_2}\kappa_{h_1^{-1}}]
         \wrt h_1}{\lp{\Ga_2}}{p}\Big]^{1/p}  \\
& \leq\Big[\int_{\Ga_1} \wrt g_1 \Big(\int_{\Ga_1} 
         \bignorm{f_{g_1 h_1}*_{\Ga_2}\kappa_{h_1^{-1}}}{\lp{\Ga_2}} \wrt h_1\Big)^p\Big]^{1/p}  \\
& \leq\Big[\int_{\Ga_1} \wrt g_1 \Big(\int_{\Ga_1} 
         \bignorm{f_{g_1 h_1}}{\lp{\Ga_2}} \bignorm{\kappa_{h_1^{-1}}}{\Cvp{\Ga_2}} 
         \wrt h_1\Big)^p\Big]^{1/p}.
\end{aligned}
$$
The inner integral is just the convolution on $\Ga_1$ between 
$g_1 \mapsto \bignorm{f_{g_1}}{\lp{\Ga_2}}$ and $g_1 \mapsto \bignorm{\kappa_{g_1}}{\Cvp{\Ga_2}}$, 
and the first of the two inequality follows.
The second inequality follows from the first and~\eqref{f: second convolution}.     

The proof of \rmii\ is very similar to that of \rmi, the only difference being that we
use Herz's \textit{principe de majoration} instead of \eqref{f: second convolution} to bound 
the convolution on $\Ga_1$ between $g_1 \mapsto \bignorm{f_{g_1}}{\lp{\Ga_2}}$ 
and $g_1 \mapsto \bignorm{\kappa_{g_1}}{\Cvp{\Ga_2}}$.   

Finally, \rmiii\ follows from \rmii\ and the fact that the convolution
of a $K$--right-invariant function and a $K$--bi-invariant function is a $K$--right-invariant function.    
\end{proof}

\begin{remark}
Notice that if $\Ga_1$ is amenable, then  
$\bignorm{\kappa}{\Cvp{\Ga_1;\Cvp{\Ga_2}}}
= \bignorm{(\Delta_{\Ga_1}^{-1/p'}\otimes 1)\kappa}{\lu{\Ga_1;\Cvp{\Ga_2}}}$, so we do not loose anything 
in the second inequality in \rmi\ above.  
\end{remark}

\subsection{Transference principle for semidirect products} \label{s: Transference semidirect}

In this subsection we consider a group $\Ga$, which is the semidirect product of two
unimodular subgroups $N$ and~$H$, with $N\cap H = \{e\}$;
here $N$ is normal in $\Ga$, and $H$ acts on $N$ by conjugation.  Denote by $\wrt n$ and
$\wrt h$ two Haar measures on $N$ and $H$, respectively.  Then $\wrt n \wrt h$
is a right Haar measure on $\Ga$.
For each $h$ in $H$ and $n$ in $N$, denote by $n^h$ the conjugate $hnh^{-1}$ of $n$ by~$h$.
Denote by $\cD(h)$ the Radon--Nikodym derivative $\wrt(n^h)/\wrt n$.
It is straightforward to check that $\cD(h) \wrt n\wrt h$ is a left Haar measure on $\Ga$,
which we denote by $\wrt g$.  The following integral formulae hold
$$
\int_\Ga f(g) \wrt g
= \int_N \int_H f(nh) \, \cD(h) \wrt n \wrt h
= \int_H \int_N f(hn) \wrt n \wrt h.
$$
Note that the assumption $N\cap H = \{e\}$ implies that the extension of $\cD$ to $NH$
given by $\cD(nh) := \cD(h)$ is well defined.   
Consistently with our previous notation, we denote by $\Cvp{\Ga}$ and $\Cvp{H}$ 
the space of \emph{right} convolutors on $\Ga$ and on $H$, respectively.  

Suppose that $S$ is a ``nice'' function on $\Ga$.  For each $n$ in $N$ we denote by 
$S(n\cdot)$ the restriction of~$S$ to $nH$, i.e.  $S(n \cdot)(h) := S(nh)$ for all $h$ in $H$.
We define $S(\cdot h)$ similarly, but with the role of $n$ and~$h$ interchanged.
These definitions extend in a natural way to the case where $S$ is a distribution on $\Ga$.
Suppose that $S$ is a distribution on $\Ga$ and assume that for (almost) each $n$ in $N$
the distribution $S(n \cdot)$ (as a distribution on $H$) is in $\Cvp{H}$, and that
the function $n\mapsto \bignorm{S(n \cdot)}{\Cvp{H}}$ is in $\laq{N}$.  Then we say
that $S$ belong to the space $\laq{N;\Cvp{H}}$, and set
\begin{equation*} 
\bignorm{S}{\laq{N;\Cvp{H}}}
:=  \Big[\int_{N} \, \bignormto{S(n \cdot)}{\Cvp{H}}{q} \wrt n\Big]^{1/q}.
\end{equation*}
The following is a special case of a more general transference principle \cite[Corollary~3.4]{CMW}. 

\begin{theorem} \label{t: Transference principle}
Suppose that $\Ga$, $N$ and $H$ are as above and that $p$ is in $(1,\infty)$.  Then
$$
\bignorm{\kappa}{\Cvp{\Ga}}
\leq \bignorm{\cD^{1/p}\kappa}{\lu{N;\Cvp{H}}}.
$$
In particular, if $\big(\cD^{1/p}\kappa\big) (nh) := Q(n) \, \phi(h)$, where $\phi$ is in $\Cvp{H}$ and $Q$ is in $\lu{N}$, then
$$
\bignorm{\kappa}{\lp{\Ga}}
\leq \bignorm{Q}{\lu{N}} \bignorm{\phi}{\Cvp{H}}.
$$
\end{theorem}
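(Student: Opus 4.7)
The plan is to prove this directly by reducing convolution on $\Ga$ to a fibrewise convolution on $H$ controlled by the $\Cvp{H}$-norm of a modified slice of $\kappa$. The argument has three ingredients: an explicit coordinate expansion of $f*\kappa$ in the semidirect decomposition, Minkowski's integral inequality applied in the $N$-direction, and an isometry trick to pass between $\lp{H,\cD(h)\wrt h}$ and $\lp{H,\wrt h}$.

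First I would unfold the convolution $f*\kappa$ in $(N,H)$ coordinates $g=nh$, using the semidirect product law $(n,h)(n_1,h_1)=(n\cdot n_1^h,hh_1)$ and the corresponding inversion formula. After successive substitutions $m=n_1^h$, $u=hh_1$, and finally $v=(m^{-1})^{u^{-1}}$ in the integration variables — each introducing a factor of $\cD$ that must be tracked against the modular factor coming from the left Haar measure $\cD(h_1)\wrt n_1\wrt h_1$ — the modular dust settles and one obtains the clean identity
$$
f*\kappa(n,h) = \int_N\int_H f\bigl(n \cdot (v^{-1})^u,\, u\bigr)\,\kappa(v,\, u^{-1}h) \wrt u \wrt v,
$$
where $\kappa(v,\cdot)$ denotes, as in the excerpt, the restriction of $\kappa$ to the slice $vH$. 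Verifying that all stray $\cD$ factors cancel — which can be cross-checked by specialising to $\kappa\equiv 1$ and recovering $f*1 \equiv \bignorm{f}{\lu{\Ga}}$ — is the main technical step of the argument.

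Next, Minkowski's integral inequality in the outer variable $v$ reduces matters to proving, for each $v\in N$, the $v$-wise bound
$$
\bignorm{F_v}{\lp{\Ga}} \leq \bignorm{\cD^{1/p}\kappa(v,\cdot)}{\Cvp{H}}\, \bignorm{f}{\lp{\Ga}},
\qquad F_v(n,h) := \int_H f\bigl(n(v^{-1})^u,\, u\bigr)\,\kappa(v,\, u^{-1}h) \wrt u.
$$
For fixed $v$ and $n$, the inner integral is precisely the convolution on $H$ of $G_{v,n}(u):=f(n(v^{-1})^u,u)$ against $\kappa(v,\cdot)$, acting in the variable $h$. Since the measure on $\Ga$ restricted to the fibre $\{n\}\times H$ is $\cD(h)\wrt h$ rather than the Haar measure of $H$, I would use the isometric identification $\lp{H,\cD(h)\wrt h}\cong \lp{H,\wrt h}$ given by $q \mapsto \cD^{1/p}q$; under it, convolution by $\kappa(v,\cdot)$ on the weighted space corresponds to convolution by $\cD^{1/p}\kappa(v,\cdot)$ on the unweighted one, and the resulting operator norm is exactly $\bignorm{\cD^{1/p}\kappa(v,\cdot)}{\Cvp{H}}$.

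Finally, integrating over $n$, the change of variables $n \mapsto n\cdot (v^{-1})^u$ — a left translation on the unimodular group $N$, hence measure-preserving — combined with Fubini gives $\int_N\bignormto{G_{v,n}}{\lp{H,\cD(h)\wrt h}}{p}\wrt n = \bignormto{f}{\lp{\Ga}}{p}$. Assembling the three pieces,
$$
\bignorm{f*\kappa}{\lp{\Ga}}
\leq \int_N \bignorm{\cD^{1/p}\kappa(v,\cdot)}{\Cvp{H}} \wrt v \,\cdot\, \bignorm{f}{\lp{\Ga}}
= \bignorm{\cD^{1/p}\kappa}{\lu{N;\Cvp{H}}}\,\bignorm{f}{\lp{\Ga}},
$$
which is the required estimate. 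The ``in particular'' clause follows at once: if $(\cD^{1/p}\kappa)(nh)=Q(n)\phi(h)$, then $\bignorm{\cD^{1/p}\kappa(v,\cdot)}{\Cvp{H}} = \mod{Q(v)}\,\bignorm{\phi}{\Cvp{H}}$, and the right-hand side factors as $\bignorm{Q}{\lu{N}}\bignorm{\phi}{\Cvp{H}}$.
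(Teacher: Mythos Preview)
Your argument is correct. The coordinate identity you derive does hold (the modular factors really do cancel once one uses the convention $\wrt m = \cD(h)^{-1}\wrt n$ for $m=n^h$, which is the one forced by the paper's left Haar formula), the isometry trick correctly converts convolution by $\kappa(v,\cdot)$ on $L^p(H,\cD\wrt h)$ into convolution by $\cD^{1/p}\kappa(v,\cdot)$ on $L^p(H,\wrt h)$, and the final change of variables in $N$ recovers $\bignorm{f}{\lp{\Ga}}$. One cosmetic slip: the map $n\mapsto n\cdot(v^{-1})^u$ is a \emph{right} translation on $N$, not a left one, but since $N$ is unimodular this is harmless.

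As for comparison with the paper: there is nothing to compare, because the paper does not prove this statement at all---it simply quotes it as a special case of \cite[Corollary~3.4]{CMW}. What you have written is essentially a self-contained version of that transference argument, carried out by hand in the semidirect-product coordinates. So your proof is not an alternative route; it is the route, just made explicit rather than outsourced.
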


\noindent
We deduce an important consequence of Theorem~\ref{t: Transference principle},
which we shall use to obtain bounds for the operator $B_1$ (see the 
decomposition \eqref{f: splitting of kB} in the proof of Theorem~\ref{t: main}).
We consider the direct product $\BG_1\times \BG_2$ of two rank one semisimple Lie
groups with finite centre, and a $\BK$--bi-invariant kernel $\kappa$ on $\BG_1 \times \BG_2$.
We consider an Iwasawa decomposition $\OBN_1\BA_1\BK_1$ of $\BG_1$,
and view the kernel $\kappa$ as a function on the group $\OBN_1\BA_1 \times \BG_2$.
We consider the right convolution operator on $\OBN_1\BA_1 \times \BG_2$, acting on 
functions in $\lp{\OBN_1\BA_1 \times \BG_2}$.  It is straightforward to check that if $f$ is 
such a function, then $f*\kappa = [\Pi_{\BK_2}f]*\kappa$, 
where $[\Pi_{\BK_2}f](v_1b_1,g_2\cdot o_2) := \int_{\BK_2} f(v_1b_1, g_2k_2) \wrt k_2$ is
the projection of $f$ on functions in $\lp{\OBN_1\BA_1 \times \BG_2/\BK_2}$.  

\begin{corollary} \label{c: Transference principle}
Suppose that $p$ is in $(1,\infty)$ and that $\kappa$ is a $\BK$--bi-invariant function
on $\BG_1\times \BG_2$.  For each $v_1$ in $\OBN_1$, set $\kappa_{v_1}(a_1, a_2) := \kappa(v_1 a_1, a_2)$.
The following hold:
\begin{enumerate}
\item[\itemno1]
$
\ds \bignorm{\kappa}{\Cvp{\OBN_1\BA_1\times \BG_2}}
\leq \int_{\OBN_1} \bignorm{[\cD_1^{1/p}\otimes 1]\, \kappa_{v_1}}{\Cvp{\BA_1 \times \BG_2}} \wrt v_1;
$
\item[\itemno2]
$\ds
\bignorm{[\cD_1^{1/p}\otimes 1]\, \kappa_{v_1}}{\Cvp{\BA_1 \times \BG_2}}
\leq C \bignorm{[\cD_1^{1/p}\otimes \de_2]\, \kappa_{v_1}}{\Cvp{\BA_1 \times \BA_2}}. 
$
\end{enumerate}
\end{corollary}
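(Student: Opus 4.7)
The plan is to deduce part (i) directly from Theorem~\ref{t: Transference principle}, and to derive part (ii) by a second application of the same transference result together with the $\BK_2$-bi-invariance of $\kappa_{v_1}$ in the second variable.

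For part (i), I would exploit the fact that $\OBN_1\BA_1\times\BG_2$ carries the natural semidirect-product structure $\OBN_1\rtimes(\BA_1\times\BG_2)$, in which $\BA_1$ acts on $\OBN_1$ by the usual Iwasawa conjugation and $\BG_2$ acts trivially. The Radon--Nikodym derivative of this action factors as $\cD_1\otimes 1$, so Theorem~\ref{t: Transference principle}, applied with $N=\OBN_1$ and $H=\BA_1\times\BG_2$, gives
$$
\bignorm{\kappa}{\Cvp{\OBN_1\BA_1\times\BG_2}}
\le \bignorm{(\cD_1\otimes 1)^{1/p}\kappa}{\lu{\OBN_1;\Cvp{\BA_1\times\BG_2}}}
= \int_{\OBN_1}\bignorm{[\cD_1^{1/p}\otimes 1]\,\kappa_{v_1}}{\Cvp{\BA_1\times\BG_2}}\wrt v_1,
$$
which is exactly the bound claimed in (i).

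For part (ii), the crucial observation is that for each fixed $a_1\in\BA_1$ the slice $\kappa_{v_1}(a_1,\cdot)$ is $\BK_2$-bi-invariant on $\BG_2$. Using the Iwasawa identification $\BG_2/\BK_2\cong\OBN_2\BA_2$ (as in the concluding step of the proof of Lemma~\ref{l: one dim multipliers glob}(v)), the convolution operator on $\BA_1\times\BG_2$ may be compared with the convolution operator on $\BA_1\times\OBN_2\BA_2$, which has the semidirect-product structure $\OBN_2\rtimes(\BA_1\times\BA_2)$ with $\BA_1$ acting trivially on $\OBN_2$. A second application of Theorem~\ref{t: Transference principle}, now with $N=\OBN_2$, $H=\BA_1\times\BA_2$, and modular function $1\otimes\cD_2$, gives the iterated bound
$$
\bignorm{[\cD_1^{1/p}\otimes 1]\,\kappa_{v_1}}{\Cvp{\BA_1\times\BG_2}}
\le \int_{\OBN_2}\bignorm{[\cD_1^{1/p}\otimes\cD_2^{1/p}]\,\kappa_{v_1}(\cdot,v_2\,\cdot)}{\Cvp{\BA_1\times\BA_2}}\wrt v_2.
$$
It then remains to absorb the outer $v_2$-integral together with the Iwasawa weight $\cD_2^{1/p}$ into a single Cartan weight $\de_2$ on $\BA_2$. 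I would achieve this by using the $\BK_2$-bi-invariance of $\kappa_{v_1}(a_1,\cdot)$ together with the Cartan--Iwasawa identity \eqref{f: decom Iwas}, which expresses $[v_2 a_2]_+$ in terms of $a_2$ and $H(v_2)$; the resulting Abel-type integration over $\OBN_2$ of a $\BK_2$-bi-invariant kernel is precisely what converts $\cD_2^{1/p}$ into $\de_2$, after extending $\kappa_{v_1}$ Weyl-symmetrically from $\BA_2^+$ to $\BA_2$.

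The main obstacle will be the last step of part (ii): passing from the $\OBN_2$-integrated iterated norm to a single $\Cvp{\BA_1\times\BA_2}$-norm with Cartan weight $\de_2$. This requires care in (a) the Abel-type computation that relies essentially on the $\BK_2$-bi-invariance, (b) the bookkeeping coming from \eqref{f: decom Iwas} that relates the Iwasawa and Cartan projections, and (c) the Weyl-symmetric extension of $\kappa_{v_1}$ to all of $\BA_2$, so that the resulting bound has the clean form stated on the right-hand side of (ii).
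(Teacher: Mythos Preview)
Your argument for part~\rmi\ is correct and matches the paper exactly: identify $\OBN_1\BA_1\times\BG_2$ as the semidirect product of $\OBN_1\times\{e_2\}$ and $\BA_1\times\BG_2$, note that the modular factor is $\cD_1\otimes 1$, and apply Theorem~\ref{t: Transference principle}.

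Part~\rmii, however, has a genuine gap. After your second application of Theorem~\ref{t: Transference principle} you obtain
\[
\int_{\OBN_2}\bignorm{[\cD_1^{1/p}\otimes\cD_2^{1/p}]\,\kappa_{v_1}(\cdot,v_2\,\cdot)}{\Cvp{\BA_1\times\BA_2}}\wrt v_2,
\]
and you then propose to collapse this into $C\bignorm{[\cD_1^{1/p}\otimes\de_2]\,\kappa_{v_1}}{\Cvp{\BA_1\times\BA_2}}$ via an ``Abel-type'' computation. That step does not go through. Abel-transform identities equate \emph{integrals} (i.e.\ $L^1$ quantities) of $\BK_2$-bi-invariant functions in Iwasawa and Cartan coordinates; they say nothing about $\Cvp$ operator norms. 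For each fixed $v_2$, the map $a_2\mapsto[v_2a_2]_+$ is a nonlinear change of variable on $\BA_2$, and the convolutor norm of $a_2\mapsto a_2^{2\rho_2/p}\kappa_{v_1}(a_1,[v_2a_2]_+)$ bears no controllable relation to that of $a_2\mapsto\de_2(a_2)\kappa_{v_1}(a_1,a_2)$. There is no mechanism for the $v_2$-integral of these $\Cvp$ norms to be dominated by a single $\Cvp$ norm with the Cartan weight $\de_2$.

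The paper bypasses this entirely. It does \emph{not} use a second semidirect-product transference; instead it invokes the Coifman--Weiss transference theorem \cite[Theorem~8.7]{CW}, which applies directly to any unimodular group with a Cartan decomposition. The group $\BA_1\times\BG_2$ admits the Cartan decomposition $(\{e_1\}\times\BK_2)\cdot(\BA_1\times\BA_2)\cdot(\{e_1\}\times\BK_2)$ with density $1\otimes\de_2$, and $[\cD_1^{1/p}\otimes 1]\,\kappa_{v_1}$ is $(\{e_1\}\times\BK_2)$-bi-invariant, so Coifman--Weiss gives $\bignorm{[\cD_1^{1/p}\otimes 1]\,\kappa_{v_1}}{\Cvp{\BA_1\times\BG_2}}\le C\bignorm{[\cD_1^{1/p}\otimes\de_2]\,\kappa_{v_1}}{\Cvp{\BA_1\times\BA_2}}$ in one stroke. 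The Cartan weight $\de_2$ appears for free; no Iwasawa detour is needed.
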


\begin{proof}
First we prove \rmi.  
It is straightforward to check that $\OBN_1 \times\{e_2\}$ is normal in $\OBN_1\BA_1 \times \BG_2$,
and that $\OBN_1\BA_1\times \BG_2$ may be viewed as the semidirect product of
$\OBN_1 \times\{e_2\}$ and $\BA_1 \times \BG_2$, with $\BA_1 \times \BG_2$ acting on $\OBN_1 \times\{e_2\}$
by conjugation. 
Hence \rmi\ follows from
Theorem~\ref{t: Transference principle} (with $\OBN_1 \times\{e_2\}$
in place of $N$ and $\BA_1 \times \BG_2$ in place of $H$).

Next we prove \rmii.  
Recall that the transference result of Coifman and Weiss
\cite[Theorem~8.7]{CW} holds for all unimodular groups that admit a ``Cartan decomposition''.
We apply this result to the group $\BA_1 \times \BG_2$, which admits the Cartan decomposition
$\big(\{e_1\}\times \BK_2\big) \cdot \big(\BA_1\times \BA_2\big) \cdot
\big(\{e_1\}\times \BK_2\big)$, and to the $\{e_1\}\times\BK_2$--bi-invariant extension
of $\kappa(v_1\cdot,\cdot)$ (thought of as a function on $\BA_1\times \BA_2$) 
to $\BA_1\times \BG_2$.  The required estimate follows.
\end{proof}

\section{Statement of the main result} \label{s: Statement}

Hereafter $\BX = \BX_1\times \BX_2$, where $\BX_1$ and $\BX_2$ are symmetric spaces of the noncompact
type and real rank one.  Our notation is consistent to that of
Subsection~\ref{s: BX reducible} above.
If $B$ is a $\BG$--invariant operator on $\BX$, bounded on $\ld{\BX}$, then $Bf = f \ast k_B$ 
for a suitable $\BK$--bi-invariant distribution $k_B$. 
We denote by $m_B$ the spherical Fourier transform of $k_B$.
If~$B$ extends to a bounded operator on~$\lp{\BX}$, then we say that
$k_B$ is a convolutor of $\lp{\BX}$.  The space $\Cvp{\BX}$ of all convolutors of $\lp{\BX}$
is a Banach space with respect to the operator norm
$
\bignorm{k_B}{\Cvp{\BX}}
:= \bigopnorm{B}{\lp{\BX}}.
$
We need to consider four classes of multipliers, satisfying various Marcinkiewicz-Mihlin (MM in the sequel)
type conditions.  It is convenient to introduce the following notation: $n_1$ and $n_2$
denote the dimensions of $\BX_1$ and $\BX_2$, and we set $n := (n_1,n_2)$.
Given two multi-indices $N = (N_1,N_2)$ and $J = (J_1,J_2)$, the relation $J\leq N$ means that
$J_1\leq N_1$ and $J_2\leq N_2$.  We denote by $\Theta_p^{(1)}$ and $\Theta_p^{(2)}$ the analogue 
on $\BX_1$ and $\BX_2$ of the~$\Theta_p$ function defined in \eqref{f: Theta} in the rank one case.  

\begin{definition} \label{def: Marc}
Suppose that $N$ is a multi-index and that $p$ is in $(1,\infty) \setminus\{2\}$.
We define $\Mar{\TWp}{N}$ [resp. $\Marinfty{\TWp}{N}$] to be the space of all 
bounded Weyl-invariant holomorphic functions $m$
on $\TWp$ such that 
\begin{equation} \label{f: Marc}
\bignorm{m}{\Mar{\TWp}{N}}
:= \sup_{(\la_1,\la_2) \in \TWp} \,  \Theta_p^{(1)}(\la_1)^{J_1} \, \Theta_p^{(2)}(\la_2)^{J_2}
\, \bigmod{\partial^J m(\la_1,\la_2)}
<\infty  
\end{equation}
[resp. 
\begin{equation} \label{f: Marc infty}      
\bignorm{m}{\Mar{\TWp}{N}}   
:= \sup_{(\la_1,\la_2) \in \TWp} \, \big[1+\Theta_p^{(1)}(\la_1)\big]^{J_1} \, \big[1+\Theta_p^{(2)}(\la_2)\big]^{J_2}
\, \bigmod{\partial^J m(\la_1,\la_2)}
< \infty ]
\end{equation}
for all multi-indices $J=(J_1,J_2) \leq N$.
When \eqref{f: Marc} [resp. \eqref{f: Marc infty}] holds we say that $m$ satisfies a 
\emph{MM condition of order $N$ on the tube $\TWp$ [resp. at infinity
on the tube $\TWp$]}. 
\end{definition}

\begin{remark} \label{rem: comparison MW Ion}
Ionescu \cite[formula~(4.1)]{I3} considered on a higher rank symmetric space an interesting condition, which,
in the reduced case, may be written as follows 
\begin{equation} \label{f: Ion}
\bignorm{m}{\Mar{\TWp}{N}'}
:= \sup_{(\la_1,\la_2) \in \TWp} \,  d_p(\la_1,\la_2)^{J_1+J_2} \, \bigmod{\partial^J m(\la_1,\la_2)}
<\infty,
\end{equation}
where $d_p(\la_1,\la_2)^2 := (\Re \la_1)^2+(\Re \la_2)^2 + \dist[(\Im \la_1,\Im\la_2), \bW_p^c]^2$,
and $\dist$ denotes the Euclidean distance in $\BR^2$.  
Notice that if $(\la_1,\la_2)$ is in $\TWp$ and it is away from $0+i\bW_p$, then a function $m$ for which 
$\bignorm{m}{\Mar{\TWp}{N}}$ is finite satisfies the estimates
\begin{equation} \label{f: MW infty}
\bigmod{\partial^J m(\la_1,\la_2)}
\leq C \, \bigmod{\Re \la_1}^{-J_1} \, \bigmod{\Re \la_2}^{-J_2}
\end{equation}
whereas a function $m$ for which 
$\bignorm{m}{\Mar{\TWp}{N}'}$ is finite satisfies 
\begin{equation} \label{f: I infty}
\bigmod{\partial^J m(\la_1,\la_2)}
\leq C \, \big[\bigmod{\Re \la_1}^2+ \, \bigmod{\Re \la_2}^2\big]^{-(J_1+J_2)/2}. 
\end{equation}
Clearly, if $m$ satisfies \eqref{f: I infty}, then it satisfies \eqref{f: MW infty}.  

Now we compare conditions \eqref{f: Marc} and \eqref{f: Ion} when $(\la_1,\la_2)$ is in $\TWp$ and it is close to $0+i\bW_p$.  
Assume first that $(\la_1,\la_2)$ belongs to $0+i\bW_p$, i.e., $\Re\la_1=0=\Re\la_2$, that $(\Im \la_1, \Im \la_2)$ is in 
$(\fra^*)^+$, and that 
$\min\big[\mod{\Im\la_1 - \de(p) \rho_1}, \mod{\Im\la_2 - \de(p) \rho_2}\big] = \mod{\Im\la_1 - \de(p) \rho_1}$.  
Notice that \eqref{f: Marc} is then equivalent to the condition 
\begin{equation*} 
\bigmod{\partial^J m(\la_1,\la_2)}
\leq C \, \mod{\Im\la_1 - \de(p) \rho_1}^{-J_1}
\end{equation*}
and \eqref{f: Ion} is equivalent to 
\begin{equation*} 
\bigmod{\partial^J m(\la_1,\la_2)}
\leq C \, \mod{\Im\la_1 - \de(p) \rho_1}^{-J_1-J_2}.
\end{equation*}
Thus, in this case if $m$ satisfies \eqref{f: Marc}, then it satisfies \eqref{f: Ion}, but not conversely.  

Next assume that $(\Im \la_1, \Im \la_2)$ is in $(\fra^*)^+$, that $\Re \la_1 = 0$, 
and that $\Re \la_2 = \mod{\Im\la_1 - \de(p) \rho_1}^{1/4}= \mod{\Im\la_2 - \de(p) \rho_2}^{1/2}$ is small.  Then 
$$
\begin{aligned}
\Theta_p^{(1)}(\la_1)^{J_1} \, \Theta_p^{(2)}(\la_2)^{J_2}
& =      \mod{\Im\la_1 - \de(p) \rho_1}^{J_1} \,\,  
	 \big[\mod{\Im\la_1 - \de(p) \rho_1} +  \mod{\Im\la_1 - \de(p) \rho_1}^2\big]^{J_2/2} \\ 
& \asymp \mod{\Im\la_1 - \de(p) \rho_1}^{J_1+J_2/4},
\end{aligned}
$$
and  
$$
\begin{aligned}
d_p(\la_1,\la_2)^{J_1+J_2} 
& =      \big[\mod{\Im\la_1 - \de(p) \rho_1}^{1/2} +  \mod{\Im\la_1 - \de(p) \rho_1}^2\big]^{(J_1+J_2)/2} \\ 
& \asymp \mod{\Im\la_1 - \de(p) \rho_1}^{(J_1+J_2)/4}.
\end{aligned}
$$
Therefore , in this case if $m$ satisfies \eqref{f: Ion}, then it satisfies \eqref{f: Marc}, but not conversely.  

The last two observations allow us to conclude that for $(\la_1,\la_2)\in \TWp$ close to $0+i\bW_p$   
the two conditions \eqref{f: Marc} and \eqref{f: Ion} are independent.   
\end{remark}

\begin{definition}
We define $\Mar{\fra^*}{N}$ [resp. $\Marinfty{\fra^*}{N}$] to be the space of all bounded   
functions~$m$ on $\fra^*$ for which 
\begin{equation} \label{f: Marc I}
\bignorm{m}{\Mar{\fra^*}{N}}
:= \sup_{(\la_1,\la_2) \in \fra^*} \, \bigmod{\la_1}^{j_1} \, \bigmod{\la_2}^{j_2}
\, \bigmod{\partial^J m(\la_1,\la_2)}
< \infty
\end{equation}
[resp. 
\begin{equation} \label{f: Marc infty I}
\bignorm{m}{\Marinfty{\fra^*}{N}} 
:= \sup_{(\la_1,\la_2) \in \fra^*} \, \big[1+\bigmod{\la_1}\big]^{j_1} \, \big[1+\bigmod{\la_2}\big]^{j_2}
\, \bigmod{\partial^J m(\la_1,\la_2)}] 
\end{equation}
for all multi-indices $J:=(J_1,J_2) \leq N$.
We call \eqref{f: Marc I} and \eqref{f: Marc infty I} \emph{MM condition} and
\emph{MM condition at infinity}, respectively.
\end{definition}

\begin{remark} \label{rem: extension}
Suppose that $m$ is in $\Mar{\TWp}{N}$.  Then $m$ extends to a function, still denoted by $m$, in 
$C^N(\OV{T_p}\setminus i\partial \bW_p)$, and the boundary values $m(\cdot+i\zeta)$,
where $\zeta \in \partial \bW_p$, satisfy a Marcinkiewicz condition $\Mar{\fra^*}{N}$. 
The proof of this fact follows the lines of the proof of Proposition~\ref{p: extension}.  We omit the details.
\end{remark}

\noindent
We recall the following corollary of the classical Marcinkiewicz multiplier theorem.

\begin{theorem} \label{t: classical Marc}
Suppose that $m$ is 
in $\Mar{\fra^*}{N}$, with $N = (N_1,N_2) \geq (1,1)$.
Then $m$ is an $\lp{\fra_1^*\times \fra_2^*}$ Fourier multiplier for all $p$ in $(1,\infty)$.
\end{theorem}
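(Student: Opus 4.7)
The plan is to recognize this as a direct application of the classical Marcinkiewicz multiplier theorem on $\BR^2$. Since $\BX_1$ and $\BX_2$ have real rank one, the spaces $\fra_1^*$ and $\fra_2^*$ are one dimensional; via the isomorphisms $\la_i \mapsto \la_i(H_0^{(i)})$ introduced in Subsection~\ref{s: BX rank one}, we identify $\fra^* = \fra_1^* \times \fra_2^*$ with $\BR^2$ and the Fourier transform on $\fra^*$ with the standard Fourier transform on $\BR^2$. Under this identification, the hypothesis that $m$ lies in $\Mar{\fra^*}{N}$ with $N = (N_1,N_2)\ge(1,1)$ provides the uniform bounds
\begin{equation*}
\bigmod{\la_1}^{j_1}\,\bigmod{\la_2}^{j_2}\,\bigmod{\partial_{\la_1}^{j_1}\partial_{\la_2}^{j_2}m(\la_1,\la_2)}
\le \bignorm{m}{\Mar{\fra^*}{N}}
\end{equation*}
for all $(j_1,j_2)$ with $j_i \in \{0,1\}$, in particular for all multi-indices up to the anisotropic order $(1,1)$.

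This is exactly the hypothesis of the classical Marcinkiewicz multiplier theorem on $\BR^2$ (see, e.g., Grafakos, \emph{Classical Fourier Analysis}, Theorem 6.2.4, or the original paper of Marcinkiewicz). That theorem then yields boundedness of the associated Fourier multiplier operator on $\lp{\BR^2}$ for every $p \in (1,\infty)$, which, under the identification above, is precisely boundedness on $\lp{\fra_1^* \times \fra_2^*}$.

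There is no real obstacle: once the identification with $\BR^2$ is fixed, the statement is a verbatim consequence of the standard product multiplier theorem, which is the reason the authors phrase the result as a corollary. The only point that deserves a remark is that the condition $N\ge(1,1)$ is consistent with the mixed-derivative form of the Marcinkiewicz condition; higher order derivatives, if present in $N$, play no role at this stage and are only used later to absorb the Harish-Chandra density and the oscillation of the spherical kernels in the proof of the main theorem.
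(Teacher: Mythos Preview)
Your proposal is correct and matches the paper's treatment: the paper does not give a proof at all, merely introducing the theorem with ``We recall the following corollary of the classical Marcinkiewicz multiplier theorem,'' and your argument makes explicit exactly this reduction to the classical $\BR^2$ result via the identification $\fra^* \cong \BR^2$.
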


\noindent
Our main result is Theorem~\ref{t: main}, which we restate for the reader's convenience.  
We denote by $\tre$ the two dimensional vector $(3,3)$.  Recall that $n = (n_1,n_2)$.  

\begin{theorem*} 
Suppose that $N>(n+\tre)/2$ 
and that $p$ is in $(1,\infty) \setminus\{2\}$.  Then there exists a constant~$C$ such that
$
\bigopnorm{B}{\lp{\BX}}
\leq C \, \bignorm{m_B}{\Mar{\TWp}{N}}
$
for every $\BG$-invariant operator $B$.
\end{theorem*}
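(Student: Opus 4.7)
The starting point is to write the kernel $k_B$ of $B$ as a sum of three pieces that reflect where the two Cartan variables $(a_1,a_2)\in\BA^+$ lie with respect to the origin. With $\Phi_i(a_i)=\bC(\alpha_i(\log a_i))$ as in Section 2, I set
\[
k_{B_0} = (\Phi_1\otimes\Phi_2)\,k_B,\qquad
k_{B_1} = \big[\Phi_1\otimes(1-\Phi_2) + (1-\Phi_1)\otimes\Phi_2\big]\,k_B,\qquad
k_{B_2} = \big((1-\Phi_1)\otimes(1-\Phi_2)\big)\,k_B,
\]
giving the operators $B_0$, $B_1$, $B_2$ announced in the introduction. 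It will suffice to prove each is bounded on $\lp{\BX}$ with the claimed bound on $\bignorm{m_B}{\Mar{\TWp}{N}}$; by Remark~\ref{rem: extension} the restriction of $m_B$ to $\fra^*$ already lies in $\Mar{\fra^*}{N}$, so Theorem~\ref{t: classical Marc} is available whenever an analytic continuation is not needed.

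For the \emph{local-local} piece $B_0$, I use the Bessel expansion \eqref{f: SpherDecSmall}-\eqref{f: SpherDecSmall I} in each factor to decompose
$
\vp_{\la_1}^1 \otimes \vp_{\la_2}^2 = (A_1+R_1)\otimes(A_2+R_2),
$
which splits $k_{B_0}$ into four terms. The cross terms involving at least one remainder $R_i$ will be controlled pointwise by the product of Lemma~\ref{l: one dim multipliers loc}(ii) and the analogous one-sided estimate for the main term, using only that $m_B|_{\fra^*}\in \Marinfty{\fra^*}{N}$, giving $\lu{\BX}$ bounds. For the purely main term $(\Phi_1\otimes\Phi_2)\,\cH^{-1}[m_B]|_{A_1\otimes A_2}$, after absorbing $w(a_1)w(a_2)$ and the Plancherel factor $|\bc_1\bc_2|^{-2}$ into an effective Euclidean-like multiplier, I transfer to a Fourier multiplier on $\BR^{n_1}\times\BR^{n_2}$ (the Bessel kernels $\cJ_{n_i/2-1}(\la_it_i)$ being exactly the radial Euclidean Fourier kernels) and invoke the classical Marcinkiewicz theorem (Theorem~\ref{t: classical Marc}); the cutoff $\Phi_1\otimes\Phi_2$ is harmless because it is an $A_p$-multiplier by Theorem~\ref{t: Ap spaces}(iv).

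The \emph{mixed} piece $B_1$ is handled by symmetry from the half piece $k=(\Phi_1\otimes(1-\Phi_2))\,k_B$. The $\BK$-bi-invariance allows me to apply Lemma~\ref{l: iterated conv}(iii), which reduces the claim to
\[
\int_{\BG_1} \bignorm{k(g_1,\cdot)}{\Cvp{\BX_2}}\,\vp_{i\de(p)\rho_1}(g_1)\wrt g_1 < \infty.
\]
For each fixed $g_1=a_1$, the function $k(a_1,\cdot)$ is, up to integration in $\la_1$ of the local Bessel part in the first variable, a spherical Fourier multiplier operator on $\BX_2$ whose multiplier $\la_2\mapsto m_B(\la_1,\la_2)$ satisfies the one-dimensional hypothesis of Lemma~\ref{l: one dim multipliers glob} uniformly in $\la_1$ (this is where the mixed Marcinkiewicz condition is used, and this is the reason one needs the derivative condition in $\la_2$ even while doing analysis in $\la_1$). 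Estimating $\bignorm{k(a_1,\cdot)}{\Cvp{\BX_2}}$ by splitting into $\kA\otimes(\text{glob}_2)$ and $\kR\otimes(\text{glob}_2)$ as in Lemmas~\ref{l: one dim multipliers loc} and \ref{l: one dim multipliers glob}, and integrating against $\vp_{i\de(p)\rho_1}$, gives the required bound because $\Phi_1\vp_{i\de(p)\rho_1}$ has compact support.

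The main obstacle, and the bulk of the work, is the \emph{global-global} piece $B_2$. Here one starts from the Harish-Chandra expansion \eqref{f: HC expansion} in each factor, which, after the same Weyl-symmetrization as in Remark~\ref{rem: spherical inversion}, produces on $\BA_1^+\times\BA_2^+$ a kernel of the form
\[
4(1-\Phi_1)(1-\Phi_2)\!\int_{\fra^*}\!\! a_1^{i\la_1-\rho_1}a_2^{i\la_2-\rho_2}\bigl[1+a_1^{-2\alpha_1}\om_1\bigr]\bigl[1+a_2^{-2\alpha_2}\om_2\bigr]\,[\check\bc_1^{-1}\check\bc_2^{-1}m_B](\la)\wrt\la,
\]
which expands into four pieces indexed by which factor carries the remainder $\om_i$. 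The purely leading piece $\ku_1\otimes\ku_2$ is the most singular and is handled by a double application of Theorem~\ref{t: Transference principle} via Corollary~\ref{c: Transference principle}, transferring from $\BG_1\times\BG_2$ to $\OBN_1\BA_1\times\OBN_2\BA_2$ and then reducing to a product of right convolutors on $\BA_1\times\BA_2$. The key point is that after the change of variables from Cartan to Iwasawa coordinates (using the $E(v,b)$ expansion of \eqref{f: decom Iwas} \emph{in each factor}), the kernel splits into a nine-fold sum of terms $\tau^{p,i_1}_1\otimes\tau^{p,i_2}_2$, where the tensor-product analogues of Lemma~\ref{l: one dim multipliers glob}(iv) must be established. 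The eight pieces containing at least one $\tau^{p,1}$ or $\tau^{p,2}$ are $\lu{\OBN;\lu{\BA}}$ by a product form of the arguments on pages following Lemma~\ref{l: phip basic}, while the single surviving piece $\tau^{p,3}_1\otimes\tau^{p,3}_2$, which is $Q(v)\,(\phi_{p,1}\otimes\phi_{p,2})$ up to lower-order corrections, lies in $\lu{\OBN;\Cvp{\BA}}$ because $\phi_{p,1}\otimes\phi_{p,2}\in\Cvp{\BA_1\times\BA_2}$ by tensoring the $\Cvp{\BA_i}$ bound from Lemma~\ref{l: phip basic}. The pieces with one $\ku_i$ and one $\ko_i$ are estimated similarly but using the extra decay \eqref{f: pointwise est kappaominfty} to absorb one of the transferences, and the $\ko_1\otimes\ko_2$ piece is the easiest since it is in $\lu{\BX}$ against $\vp_{i\de(p)\rho}$. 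The crux is that the Marcinkiewicz hypothesis, as opposed to the H\"ormander hypothesis \eqref{f: Ion} of Ionescu, permits only product-type differential estimates, so the transference must be iterated \emph{separately} in each factor rather than diagonally; this is precisely what Corollary~\ref{c: Transference principle} is designed for.
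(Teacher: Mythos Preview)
Your outline has the right architecture, and the treatment of $B_0$ is essentially the paper's, but there are two genuine gaps in the handling of the main terms of $B_1$ and $B_2$ that stem from the same oversight: for a Marcinkiewicz multiplier $m_B$ that is \emph{not} a tensor product, none of the kernels you produce after the expansions factor as tensor products, and neither Herz majorization in one variable nor ``tensoring'' one--dimensional $Cv_p$ bounds is available.

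For $B_1$ you propose to control the half piece $(\Phi_1\otimes(1-\Phi_2))k_B$ by Lemma~\ref{l: iterated conv}\rmiii, i.e.\ by $\int_{\BG_1}\bignorm{k(g_1,\cdot)}{\Cvp{\BX_2}}\vp_{i\de(p)\rho_1}(g_1)\wrt g_1$, arguing that the compact support of $\Phi_1$ makes this finite. It does not. For fixed $a_1$ the effective multiplier on $\BX_2$ is $M(a_1,\la_2)=\Phi_1(a_1)\int_{\fra_1^*}\vp_{\la_1}^{(1)}(a_1)\,m_B(\la_1,\la_2)\wrt\nu_1(\la_1)$; already for a tensor multiplier $m_B=m_1\otimes m_2$ this is $\kappa_1(a_1)\,m_2(\la_2)$ with $\kappa_1$ the local Calder\'on--Zygmund kernel on $\BX_1$, so $\bignorm{k(a_1,\cdot)}{\Cvp{\BX_2}}\asymp|\kappa_1(a_1)|\asymp(\log a_1)^{-n_1}$, and integrating against $\de_1(a_1)\wrt a_1\asymp(\log a_1)^{n_1-1}\wrt a_1$ over $\supp\Phi_1$ gives $\int(\log a_1)^{-1}\wrt a_1=\infty$. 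The $\kR$ split saves the remainder pieces but not the leading $A_1$ piece. The paper handles the analogue of this term ($\kuA$ in its labelling) by transferring to $\OBN_1\BA_1\times\BG_2$ via Corollary~\ref{c: Transference principle}, running the $\tau^{p,j}$ decomposition in the global variable, and ultimately reducing the top piece to the classical Marcinkiewicz theorem on $\BA_1\times\BA_2$ (Proposition~\ref{p: kB1}\rmiii).

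For $B_2$ you correctly invoke transference and the nine--fold $\tau$ decomposition, but then assert that the surviving piece is $Q(v)\,(\phi_{p,1}\otimes\phi_{p,2})$ and lies in $\lu{\OBN;\Cvp{\BA}}$ ``by tensoring the $\Cvp{\BA_i}$ bound from Lemma~\ref{l: phip basic}''. The function $\phipuu(a_1,a_2)=[1-\Phi_1][1-\Phi_2]\,a^{\de(p)\rho}\int_{\fra^*}a^{i\la}\,\mbcuu(\la)\wrt\la$ is \emph{not} a tensor product unless $m_B$ is, and there is no general inequality bounding $\bignorm{\phipuu}{\Cvp{\BA}}$ by the product of one--variable norms. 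The paper (Lemmata~\ref{l: prop of Xi} and \ref{l: prop of Xi II}) shifts the contour to $\fra^*+i\de(p)\rho$, splits via $\Psi_1,\Psi_2$ into four pieces, and for the piece supported near the origin in $\la$ invokes the classical Marcinkiewicz theorem on $\BA$ (this is where Remark~\ref{rem: extension} and Theorem~\ref{t: classical Marc} actually enter the global analysis), then uses the $A_p(\BA)$ pointwise--multiplier machinery of Theorem~\ref{t: Ap spaces} to restore the cutoffs $(1-\Phi_1)(1-\Phi_2)$ and the quadrant indicators $\chi_{\BA_1^\pm\times\BA_2^\pm}$. Without this step your argument only covers tensor--product multipliers, which is precisely the trivial case the theorem is meant to go beyond.
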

\begin{remark} \label{rem: comparison}
Let $\cL _j$ be the Laplace--Beltrami operator on $\BX_j,$ $j=1,2$. 
Using \cite[Corollary 3.2]{WroJSMMSO} one may prove a Marcinkiewicz-type multiplier theorem on $L^p(\BX)$ 
for joint spectral multipliers $m(\cL)$ of the pair $\cL=(\cL_1,\cL_2).$ 
This result however is significantly weaker then Theorem \ref{t: main}. 
Indeed, in \cite[Corollary 3.2]{WroJSMMSO} one requires $m$ to be defined and bi-holomorphic 
on certain products of sectors (depending on $p$) in the right half-plane. On the other hand it is 
well known that the $L^p(\BX_j)$ spectrum of $\cL_j$ (for $p\neq 2$) is an ellipsoid 
in the right half plane (the $p$-ellipsoid being contained in the $p$-sector). 
Moreover, in order to conclude the $L^p(\BX)$ boundedness of $m(\cL)$ by using Theorem~\ref{t: main}, 
we would only require that $m$ is bi-holomorphic on the product of the two $p$-ellipsoids, which 
is a significantly smaller set than the product of the two $p$-sectors. 
\end{remark}

\noindent
\textit{Outline of the proof of Theorem~\ref{t: main}.}
Since $\bignorm{k_B}{\Cvp{\BX}} = \bignorm{k_B}{Cv_{p'}(\BX)}$ for all $p$ in $(1,\infty)$,
we may assume that $p$ is in $(1,2)$ in the rest of the proof.   
The strategy of the proof is to obtain estimates of the kernel $k_B$ of $B$ via
the inverse spherical Fourier transform.
Since $k_B$ is $\BK$--bi-invariant,
it suffices to estimate the restriction of $k_B$ to $\BA^+ = \BA_1^+\times \BA_2^+$.
It is common practice to split up the analysis of $k_B$ into three parts: (i) the analysis near the origin;
(ii) the analysis near the walls of the Weyl chamber, but away from the origin;
(iii) the analysis at infinity, but away from the walls.

Recall that $\bC$ was defined just after Proposition~\ref{p: extension}.   
We denote by $\Phi_1$ the $\BK_1$--bi-invariant function on $\BG_1$ defined by 
$
\Phi_1\big(a_1\big)
:= \bC \big(\al_1(\log a_1)\big)
$
for every $a_1$ in $\BA_1$, and define $\Phi_2$ similarly on $\BG_2$.  
We define the $\BK$--bi-invariant
functions $k_{B_0}$, $k_{B_1}$ and $k_{B_2}$ on $\BG$ by
\begin{equation} \label{f: splitting of kB}
\begin{aligned}
k_{B_0}
& = k_B \, \big[\Phi_1\otimes \Phi_2\big]  \\
k_{B_1}
& = k_B \, \big[\Phi_1\otimes (1-\Phi_2)+ (1-\Phi_1)\otimes \Phi_2\big]   \\
k_{B_2}
& = k_B \, \big[(1-\Phi_1)\otimes (1-\Phi_2)\big]  \\
\end{aligned}
\end{equation}
and denote by $B_0$, $B_1$ and $B_2$ the $\BG$-invariant operators with
convolution kernels $k_{B_0}$, $k_{B_1}$ and $k_{B_2}$, respectively.
Of course, $B = B_0 + B_1 + B_2$.
The operators $B_0$, $B_1$ and $B_2$ will be analysed in Sections~\ref{s: loc-loc},
\ref{s: loc-glob} and \ref{s: glob-glob}, respectively.
In order to ensure the convergence of various integrals appearing in the proof, \label{p:outline}
we assume that $m_B$ is pre-multiplied by the factor $\wt h_\vep (\la_1,\la_2):=\e^{-\vep (\la_1^2+\la_2^2)}$.
This is no loss of generality as our final bounds depend on $\bignorm{m_B}{\Mar{\TWp}{N}}$,
and $\bignorm{m_B\,\wt h_\vep}{\Mar{\TWp}{N}}$
is convergent to $\bignorm{m_B}{\Mar{\TWp}{N}}$ as $\vep$ tends to $0$.
Recall that
\begin{equation*} 
k_B(a)
=  \int_{\fra^*} \vp_{\la}(a)\, m_B(\la) \wrt \nu(\la), 
\end{equation*}
and that $\vp_{\la}= \vp_{\la_1}^{(1)}\otimes \vp_{\la_2}^{(2)}$.  
The strategy to analyse $B_0$, $B_1$ and $B_2$ is simple:  we expand
the spherical functions $\vp_{\la_1}^{(1)}$ and $\vp_{\la_2}^{(2)}$ by
using either the local or the Harish-Chandra
asymptotic expansion of Subsection~\ref{s: BX rank one}, according to whether the
spherical functions are evaluated near the origin or away from the origin,
and multiply these expansions out.   

\noindent
\textbf{Analysis of $k_{B_0}$}.
The support of $k_{B_0}$ is contained in a neighbourhood of the origin.
Thus, only the values of $a_1$ and $a_2$ near $1$ matter.
By \eqref{f: SpherDecSmall},
applied to both $\vp_{\la_1}^{(1)}$ and $\vp_{\la_2}^{(2)}$, 
\begin{equation} \label{f: dec for B0}
k_{B_0}
= \kAA + \kAR + \kRA + \kRR.
\end{equation}  
Explicitly,
$
\ds \kAA (a_1,a_2)
= \Phi_1(a_1) \, \Phi_2(a_2) \, \int_{\fra^*} A_1(\la_1,a_1) \, A_2(\la_2,a_2)
  \, m_B(\la_1,\la_2) \, \wrt \nu_1(\la_1) \wrt \nu_2(\la_2);
$
similar formulae hold for $\kRA$, $\kAR$ and $\kRR$.
We see that 
$\kAR$ is in $\lu{\BX_2; \Cvp{\BX_1}}$ and $\kRA$ is in $\lu{\BX_1; \Cvp{\BX_2}}$
by Proposition~\ref{p: kB022}~\rmii, and that $\kRR$ is in $\lu{\BX}$, by Proposition~\ref{p: kB022}~\rmi.  
By Theorem~\ref{t: Transference principle}~\rmi, $\kAR$, $\kRA$ and $\kRR$ belong to $\Cvp{\BG}$,
hence to $\Cvp{\BX}$ for these kernels are $\BK$--bi-invariant.
Also $\kAA$ is in $\Cvp{\BX}$ by Proposition~\ref{p: kB011}~\rmiv.  
Therefore $k_{B_0}$ is in $\Cvp{\BX}$.  

\noindent
\textbf{Analysis of $k_{B_1}$}.
The support of the kernel $k_{B_1}$ is contained in a neighbourhood of the walls
of the Weyl chamber and has positive distance from the origin.
Then either $a_1$ is large and $a_2$ is close to the identity, or conversely.
We shall focus on the first case; the second case follows from the first by simply interchanging the 
roles of $\BX_1$ and $\BX_2$.  
We apply the Harish-Chandra expansion \eqref{f: HC expansion} to $\vp_{\la_1}^{(1)}$, 
the local expansion \eqref{f: SpherDecSmall} to~$\vp_{\la_2}^{(2)}$, and obtain
\begin{equation} \label{f: dec for B1}
k_{B_1} 
= 2\kuA + 2\kuR + 2\kop
\end{equation}
near the wall of the Weyl chamber that is annihilated by $\al_2$, where 
$$
\begin{aligned}
\kuA (a_1,a_2)
& =  [1-\Phi_1(a_1)]\Phi_2(a_2)\int_{\fra^*} \!\!a_1^{i\la_1-\rho_1}\,
     \, A_2(\la_2,a_2)\, \cinvmB(\la_1,\la_2) \wrt \la_1 \wrt \nu_2(\la_2)\\
\kuR(a_1,a_2)
& =  [1-\Phi_1(a_1)]\Phi_2(a_2)\int_{\fra^*} \!\!a_1^{i\la_1-\rho_1}\,
     \, R_2(\la_2,a_2)\, \cinvmB(\la_1,\la_2) \wrt \la_1 \wrt \nu_2(\la_2) \\
\kop(a_1,a_2)
& =  [1-\Phi_1(a_1)]\Phi_2(a_2)\int_{\fra^*}\!\! a_1^{i\la_1-\rho_1-2\al_1}\,  \om(\la_1,a_1)
     \, \vp_{\la_2}(a_2)\, \cinvmB(\la_1,\la_2) \wrt \la_1 \wrt \nu_2(\la_2)
\end{aligned}
$$
for every $(a_1,a_2)$ in $\BA_1^+\times\BA_2^+$.  
We have used Remark~\ref{rem: spherical inversion} twice to obtain the formulae above.  
We shall prove that $\kop$, $\kuR$ and $\kuA$ are in $\Cvp{\BX}$ in Proposition~\ref{p: kB1}~\rmi-\rmiii\ below.

\noindent
\textbf{Analysis of $k_{B_2}$}.
The support of the kernel $k_{B_2}$ is contained in a set where both $a_1$ and $a_2$ are large.
We apply Harish-Chandra's expansion \eqref{f: HC expansion} to both $\vp_{\la_1}^{(1)}$
and~$\vp_{\la_2}^{(2)}$, and use Remark~\ref{rem: spherical inversion} twice, and obtain that
\begin{equation} \label{f: dec for B2}
k_{B_2} 
= 4\kuu + 4\kuo + 4\kou + 4\koo,
\end{equation}
where
$$
\begin{aligned}
\kuu(a_1,a_2) 
& = [1-\Phi_1(a_1)]\, [1-\Phi_2(a_2)] \int_{\fra^*} \!\! a_1^{i\la_1-\rho_1} a_2^{i\la_2 -\rho_2}\, \mbcuu(\la)\wrt \la\\
\kuo(a_1,a_2) 
& = [1-\Phi_1(a_1)]\, [1-\Phi_2(a_2)] \int_{\fra^*} \!\! a_1^{i\la_1-\rho_1} a_2^{i\la_2 -\rho_2-2\al_2}\,
	 \om_2(\la_2,a_2) \, \mbcuu(\la)\wrt \la\\
\kou(a_1,a_2) 
& = [1-\Phi_1(a_1)]\, [1-\Phi_2(a_2)] \int_{\fra^*} \!\! a_1^{i\la_1-\rho_1-2\al_1} a_2^{i\la_2 -\rho_2}\,
	 \om_1(\la_1,a_1) \, \mbcuu(\la)\wrt \la\\
\koo(a_1,a_2) 
& = [1-\Phi_1(a_1)]\, [1-\Phi_2(a_2)] \int_{\fra^*} \!\! a_1^{i\la_1-\rho_1-2\al_1} a_2^{i\la_2 -\rho_2-2\al_2}\,
	 \om_1(\la_1,a_1) \, \om_2(\la_2,a_2) \, \mbcuu(\la)\wrt \la 
\end{aligned}
$$
for every $(a_1,a_2)$ in $\BA_1^+\times\BA_2^+$.  
Here we have set $\mbcuu := \big[\check\bc_1^{-1}\otimes \check\bc_2^{-1}\big] \, m_B$.
We shall prove that $\kuo$, $\kou$, $\koo$ and $\kuu$ belong to $\Cvp{\BX}$ in Proposition~\ref{p: kB2} below.
\endproof

\section[Analysis of $B_0$]{Analysis of the operator $B_0$} \label{s: loc-loc}

In this section we prove that the kernel $k_{B_0}$, defined in \eqref{f: splitting of kB}, is in $\Cvp{\BX}$.  
Recall the decomposition \eqref{f: dec for B0}.  

\begin{proposition} \label{p: kB022}
There exists a constant $C$, independent of $B$, such that the following hold:
\begin{enumerate}
\item[\itemno1]
$\bignorm{\kRR}{\lu{\BX}} \leq C  \bignorm{m_B}{\Marinfty{\fra^*}{N}}$;
\item[\itemno2]
$ \bignorm{\kAR}{\lu{\BX_2;\Cvp{\BX_1}}} \leq C \bignorm{m_B}{\Marinfty{\fra^*}{N}}$.
A similar statement holds for~$\kRA$, with the roles of $\BX_1$ and $\BX_2$
interchanged.
\end{enumerate}
\end{proposition}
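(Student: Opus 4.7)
\textbf{Proof proposal for Proposition~\ref{p: kB022}.}
The plan is to exploit the product structure of the Cartan local expansion and reduce everything to the rank one estimates of Lemma~\ref{l: one dim multipliers loc}, iterated in the two variables via Fubini. Specifically, for fixed $a_2\in \BA_2^+$, I define the auxiliary multiplier on $\fra_1^*$ by
\[
G_{a_2}(\la_1)
:= \Phi_2(a_2) \int_{\fra_2^*} R_2(\la_2,a_2)\, m_B(\la_1,\la_2)\, \wrt \nu_2(\la_2),
\]
so that, interchanging the order of integration in the definitions of $\kRR$ and $\kAR$,
\[
\kRR(a_1,a_2) = \kappa_{R_1}^{G_{a_2}}(a_1)
\qquad\text{and}\qquad
\kAR(a_1,a_2) = \kappa_{A_1}^{G_{a_2}}(a_1),
\]
where $\kappa_{R_1}^{\cdot}$ and $\kappa_{A_1}^{\cdot}$ denote the rank one kernels on $\BX_1$ defined as in \eqref{f: kernels rank one} with $m$ replaced by the indicated multiplier.

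The first task is to bound $G_{a_2}$ in the rank one H\"ormander norm $\Horminfty{\fra_1^*}{N_1}$ with an explicit dependence on $a_2$. For each $j_1\le N_1$, differentiation under the integral sign (justified by the regularisation $\wt h_\vep$ described in the outline) gives
\[
\partial_{\la_1}^{j_1} G_{a_2}(\la_1)
= \kappa_{R_2}^{\partial_{\la_1}^{j_1}m_B(\la_1,\cdot)}(a_2).
\]
The Marcinkiewicz assumption at infinity on $m_B$ guarantees that the function $\la_2\mapsto \partial_{\la_1}^{j_1}m_B(\la_1,\la_2)$ lies in $\Horminfty{\fra_2^*}{N_2}$ with norm controlled by $(1+|\la_1|)^{-j_1}\,\|m_B\|_{\Marinfty{\fra^*}{N}}$. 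Since $N_2>(n_2+3)/2$, Lemma~\ref{l: one dim multipliers loc}~\rmii\ (applied on $\BX_2$) yields the pointwise bound
\[
\bigmod{\partial_{\la_1}^{j_1} G_{a_2}(\la_1)}
\le C\,(1+|\la_1|)^{-j_1}\,\frac{\Phi_2(a_2)}{(\log a_2)^{n_2-1}}\,\bignorm{m_B}{\Marinfty{\fra^*}{N}},
\]
hence $\bignorm{G_{a_2}}{\Horminfty{\fra_1^*}{N_1}} \le C\, \Phi_2(a_2)(\log a_2)^{-(n_2-1)}\,\bignorm{m_B}{\Marinfty{\fra^*}{N}}$.

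With this estimate in hand, \rmi\ follows by applying Lemma~\ref{l: one dim multipliers loc}~\rmii\ once more on $\BX_1$, which gives $|\kRR(a_1,a_2)|\le C\,\bignorm{G_{a_2}}{\Horminfty{\fra_1^*}{N_1}}\,\Phi_1(a_1)(\log a_1)^{-(n_1-1)}$, and integrating the product bound against the Cartan density $\de_1(a_1)\de_2(a_2)$; since $\de_j(a_j)\asymp (\log a_j)^{n_j-1}$ on the support of $\Phi_j$, the resulting integral is finite. For \rmii, Lemma~\ref{l: one dim multipliers loc}~\rmi\ on $\BX_1$ gives $\bignorm{\kAR(\cdot,a_2)}{\Cvp{\BX_1}} \le C\,\bignorm{G_{a_2}}{\Horminfty{\fra_1^*}{N_1}}$, and integrating in $a_2\in \BA_2^+$ with density $\de_2(a_2)$ yields the desired bound, the role of $\BX_1$ and $\BX_2$ being symmetric for $\kRA$. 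The only delicate point is to verify that the pointwise Marcinkiewicz assumption transfers to a H\"ormander type norm of $G_{a_2}$ with the sharp constant $\Phi_2(a_2)(\log a_2)^{-(n_2-1)}$; once this is granted, the proof reduces to two successive applications of the rank one lemma and a routine use of Fubini on the Cartan decomposition.
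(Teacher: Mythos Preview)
Your proposal is correct and follows essentially the same approach as the paper: both freeze $a_2$, identify the inner integral as the rank-one $R_2$-kernel of $\partial_{\la_1}^{j_1}m_B(\la_1,\cdot)$, apply Lemma~\ref{l: one dim multipliers loc}~\rmii\ on $\BX_2$ to get $\bignorm{G_{a_2}}{\Horminfty{\fra_1^*}{N_1}}\le C\,\Phi_2(a_2)(\log a_2)^{-(n_2-1)}\bignorm{m_B}{\Marinfty{\fra^*}{N}}$, and then invoke the appropriate part of the same lemma on $\BX_1$ before integrating in $a_2$ against $\de_2(a_2)\wrt a_2$. The only cosmetic difference is that the paper phrases the $\BX_1$ step for \rmi\ as an $\lu{\BX_1}$ bound on $\kRR(\cdot,a_2)$ rather than the pointwise bound you state, but these are equivalent via Lemma~\ref{l: one dim multipliers loc}~\rmii.
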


\begin{proof}
The proof of \rmi\ is a straightforward consequence of Lemma~\ref{l: one dim multipliers loc}.  Indeed,  
$$
\kRR (a_1,a_2)
= \Phi_1(a_1) \, \int_{\fra_1^*}\!\! \wrt \nu_1(\la_1) \, R_1(\la_1,a_1)
\, \Phi_2(a_2) \int_{\fra_2^*} \, R_2(\la_2,a_2) \, m_B(\la_1,\la_2) \, \wrt \nu_2(\la_2),
$$
whence, by Lemma~\ref{l: one dim multipliers loc}~\rmii\ applied to $\BX_1$,  
$$
\bignorm{\kRR (\cdot,a_2)}{\lu{\BX_1}}
\leq C \,  \Phi_2(a_2) \Bignorm{\int_{\fra_2^*} \, R_2(\la_2,a_2) \, m_B(\cdot,\la_2) \, 
\wrt \nu_2(\la_2)}{\Horminfty{\fra_1^*}{N_1}}.
$$
Thus, we need to estimate 
$$
\Phi_2(a_2) \sup_{\la_1\in \fra_1^*} \, (1+\mod{\la_1})^{j_1} 
\Bigmod{\int_{\fra_2^*} \, R_2(\la_2,a_2) \, \partial_{\la_1}^{j_1} m_B(\cdot,\la_2) \, \wrt \nu_2(\la_2)},
$$
which, by Lemma~\ref{l: one dim multipliers loc}~\rmii\ applied to $\BX_2$, is dominated by 
$$
\frac{\Phi_2(a_2)}{(\log a_2)^{n_2-1} }  \sup_{\la_1\in \fra_1^*} \, (1+\mod{\la_1})^{j_1} 
\sup_{\la_2\in \fra_2^*} \, (1+\mod{\la_2})^{j_2} 
\bigmod{\partial_{\la_2}^{j_2}\partial_{\la_1}^{j_1} m_B(\la_1,\la_2)}
\leq  \bignorm{m_B}{\Marinfty{\fra^*}{N}} \, \frac{\Phi_2(a_2)}{(\log a_2)^{n_2-1}}.
$$
Therefore we have proved the pointwise bound
$$
\bignorm{\kRR (\cdot,a_2)}{\lu{\BX_1}}
\leq C \bignorm{m_B}{\Marinfty{\fra^*}{N}} \, \frac{\Phi_2(a_2)}{(\log a_2)^{n_2-1}}.
$$
The required estimate follows by integrating both sides on $\BA_2^+$ with respect to the measure $\ds \de_2(a_2) \wrt {a_2}$.  

Next we prove the estimate of $\kAR$ in \rmii.
The proof of that of~$\kRA$ is similar, and is omitted.  Recall that 
$$
\kAR (a_1,a_2)
= \Phi_1(a_1) \, \int_{\fra_1^*}\!\! \wrt \nu_1(\la_1) \, A_1(\la_1,a_1)
\, \Phi_2(a_2) \int_{\fra_2^*}  \! R_2(\la_2,a_2) \, m_B(\la_1,\la_2) \, \wrt \nu_2(\la_2).
$$
We proceed much as in the proof of \rmi.  By Lemma~\ref{l: one dim multipliers loc}~\rmi, 
applied to $\BX_1$, we see that 
$$
\bignorm{\kAR (\cdot,a_2)}{\Cvp{\BX_1}}
\leq C \,  \Phi_2(a_2) \Bignorm{\int_{\fra_2^*} \! R_2(\la_2,a_2) \, m_B(\cdot,\la_2) \, 
\wrt \nu_2(\la_2)}{\Horminfty{\fra_1^*}{N_1}}.
$$
As in the proof of \rmi\ this implies the pointwise bound
$$
\bignorm{\kAR (\cdot,a_2)}{\Cvp{\BX_1}}
\leq C \bignorm{m_B}{\Marinfty{\fra^*}{N}} \, \frac{\Phi_2(a_2)}{(\log a_2)^{n_2-1}}.
$$
The required estimate follows by integrating both sides on $\BA_2^+$ with respect to the measure $\ds \de_2(a_2) \wrt {a_2}$.  
\end{proof}

\noindent
Next we analyse $\kAA$.
Recall that the support of $\kAA$ is a compact neighbourhood of the origin.
It is convenient to further decompose $\kAA$ via a partition of unity ``on the
Fourier transform side'' as follows.  Recall the function $\bC$, defined just above 
\eqref{f: kernels rank one}.  Set $\Psi_1(\la_1) := \bC \big(\la_1(H_0)\big)$ for every $\la_1$
in $\fra_1^*$, and define $\Psi_2$ similarly on $\fra_2^*$.  
We shall use the following smooth finite partition of unity on $\fra_1^*\times\fra_2^*$
\begin{equation} \label{f: smooth finite partition}
1
= \Psi_1\otimes\Psi_2 + (1-\Psi_1)\otimes \Psi_2 + \Psi_1\otimes (1-\Psi_2) + (1-\Psi_1)\otimes (1-\Psi_2).
\end{equation}
Correspondingly, we may write
\begin{equation*} 
\kAA
= \kappa_{0,0} + \kappa_{1,0} + \kappa_{0,1} + \kappa_{1,1}.
\end{equation*}
Note that each of the kernels on the right hand side is a $\BK$--bi-invariant function on~$\BG$.
The kernel $\kappa_{0,0}$ is given by the formula
\begin{equation} \label{f: kappa00}
\begin{aligned}
\kappa_{0,0}(a_1, a_2) 
& = \Phi_1(a_1)\, \Phi_2(a_2) \, \int_{\fra*} A(\la,a_1,a_2) \, [\Psi_1\otimes\Psi_2](\la) 
       \, m_B (\la) \wrt \nu(\la), 
\end{aligned}
\end{equation}
where
$A(\la,a_1,a_2) := A_1(\la_1,a_1)\, A_2(\la_2,a_2)$.
The formulae for $\kappa_{1,0}$, $\kappa_{0,1}$ and $\kappa_{1,1}$ are similar, but
with  $(1-\Psi_1)\otimes \Psi_2$, $\Psi_1\otimes (1-\Psi_2)$ and $(1-\Psi_1)\otimes (1-\Psi_2)$
in place of $\Psi_1\otimes\Psi_2$.
We denote by 
$\cO_1$ 
and by $\cO_1^*$ 
the differential operators 
$$
\cO_1 \psi (\la_1)
= \frac{1}{\la_1} \partial_{\la_1}\psi(\la_1)
\qquad\hbox{and}\qquad
\cO_1^* \psi (\la_1)
= -\partial_{\la_1}\Big[\frac{\psi (\la_1)}{\la_1}\Big].   
$$ 
We define $\cO_2$ and $\cO_2^*$ similarly on $\fra_2^*$.   

\begin{lemma} \label{l: est for M}
Suppose that $m$ is in $\Marinfty{\fra^*}{N}$, and set  
$M := [1-\Psi_1]\,[1-\Psi_2] \, m\mod{\bc}^{-2}$.  The following hold
\begin{enumerate}
\item[\itemno1]
the function $M$ satisfies the estimate
$$
\bigmod{\partial^\al (\cO_1^*)^{j_1} (\cO_2^*)^{j_2} M(\la_1,\la_2)}
		\leq C\, \frac{\norm{m}{\Marinfty{\fra^*}{N}}}{\big(1+\mod{\la_1}\big)^{1+2j_1+\al_1-n_1}  
\,  \big(1+\mod{\la_2}\big)^{1+2j_2+\al_2-n_2}}  
$$
for all multi-indices $\al = (\al_1,\al_2)$ with $\al_1$ and $\al_2$ in $\{0,1,2\}$, and 
$(j_1,j_2) \leq N-\al$;   
\item[\itemno2]
if $n_1$ and $n_2$ are even, 
then there exists a constant $C$ such that 
for all multi-indices $\al = (\al_1,\al_2)$ with $\al_1$ and $\al_2$ in $\{0,1\}$
$$
\Bigmod{\partial_v^\al\partial_{v_1v_2}^2\int_0^1 \int_0^1 
\frac{\big[(\cO_1^*)^{n_1/2-1}(\cO_2^*)^{n_2/2-1}  M\big](v_1\la_1',v_2\la_2')}{
     \sqrt{1-\la_1'^2} \sqrt{1-\la_2'^2}} \wrt \la_1'\wrt \la_2'} 
\leq C \, \frac{\bignorm{m}{\Marinfty{\fra^*}{N}}}{(1+\mod{v_1})^{\al_1}\, (1+\mod{v_2})^{\al_2}};
$$
\item[\itemno3]
if $n_1$ is even and $n_2$ is odd, then there exists a constant $C$ such that 
for all multi-indices $\al = (\al_1,\al_2)$ with $\al_1$ and $\al_2$ in $\{0,1\}$
$$
\Bigmod{\partial_v^\al\partial_{v_1}\int_0^1 
\frac{\big[(\cO_1^*)^{n_1/2-1}(\cO_2^*)^{(n_2-1)/2}  M\big](v_1\la_1',v_2)}{\sqrt{1-\la_1'^2}} \wrt \la_1'} 
\leq C \, \frac{\bignorm{m}{\Marinfty{\fra^*}{N}}}{(1+\mod{v_1})^{\al_1}\, (1+\mod{v_2})^{\al_2}}.
$$
\end{enumerate} 
\end{lemma}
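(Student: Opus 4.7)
The approach is to reduce both parts of the lemma to a concrete pointwise estimate for $M$ and its partial derivatives away from the origin. For part (i), a straightforward induction starting from $\cO_i^*\psi = -\la_i^{-1}\partial_{\la_i}\psi + \la_i^{-2}\psi$ gives the explicit formula
$$
(\cO_i^*)^j \psi(\la_i) = \sum_{k=0}^j c_{j,k}\, \la_i^{-2j+k}\, \partial_{\la_i}^k \psi(\la_i)
$$
for suitable constants $c_{j,k}$. Tensorising this identity and distributing $\partial^\al$ by Leibniz, the proof of (i) boils down to pointwise control of the mixed derivatives of $m\mod{\bc}^{-2}$. The Marcinkiewicz condition furnishes $\mod{\partial^K m(\la)}\leq \bignorm{m}{\Marinfty{\fra^*}{N}}(1+\mod{\la_1})^{-K_1}(1+\mod{\la_2})^{-K_2}$ for $K\leq N$, while the factorisation $\mod{\bc}^{-2} = \check\bc^{-1}\overline{\check\bc^{-1}}$ and \eqref{f: HCest} yield $\mod{\partial^K\mod{\bc_i}^{-2}(\la_i)}\leq C(1+\mod{\la_i})^{n_i-1-K_i}$. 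Since $(1-\Psi_i)$ is supported in $\{\mod{\la_i}\geq 1\}$, the monomials $\la_i^{-2j_i+k_i}$ are dominated there by $(1+\mod{\la_i})^{-2j_i+k_i}$; a direct bookkeeping check shows that for every way the $\partial^\al$ distribute across $\la_i^{-2j_i+k_i}$ and $\partial^k M$ via Leibniz, the resulting $\la_i$-power is exactly $n_i-1-2j_i-\al_i$, which is the exponent demanded by (i). The terms in which $\partial^\al$ hits $(1-\Psi_i)$ are compactly supported where $\mod{\la_i}\sim 1$ and are dominated trivially.

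For parts (ii) and (iii), I would set $F := (\cO_1^*)^{n_1/2-1}(\cO_2^*)^{n_2/2-1}M$ (or the appropriate analogue in (iii)) and differentiate under the integral sign. The chain rule gives
$$
\partial_v^\al\partial_{v_1 v_2}^2 F(v_1\la_1',v_2\la_2') = (\la_1')^{1+\al_1}(\la_2')^{1+\al_2}\,(\partial_{\la_1}^{1+\al_1}\partial_{\la_2}^{1+\al_2}F)(v_1\la_1',v_2\la_2').
$$
Applying (i) with $j_i = n_i/2-1$ and multi-index $(1+\al_1,1+\al_2)\in\{1,2\}^2$ (the hypothesis $N> (n+\tre)/2$ guarantees $N_i\geq n_i/2+1$), the critical exponent $1+2j_i+(1+\al_i)-n_i$ equals $\al_i$, whence
$$
\mod{(\partial_{\la_1}^{1+\al_1}\partial_{\la_2}^{1+\al_2}F)(\la_1,\la_2)} \leq C\bignorm{m}{\Marinfty{\fra^*}{N}}(1+\mod{\la_1})^{-\al_1}(1+\mod{\la_2})^{-\al_2}.
$$
Substituting this back reduces (ii) to the elementary one-dimensional inequality
$$
\int_0^1 \frac{(\la')^{1+\al}}{\sqrt{1-(\la')^2}\,(1+\mod{v\la'})^\al}\wrt\la' \leq \frac{C}{(1+\mod{v})^\al} \qquad (\al\in\{0,1\}),
$$
which for $\al=0$ is immediate and for $\al=1$ follows by splitting the range of integration at $\la'=1/\mod{v}$ and using the integrability of $(\la')^2/\sqrt{1-(\la')^2}$ on $[0,1]$. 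Part (iii) proceeds by the same change of variables but only in $\la_1'$; the arithmetic identity $1+2(n_2-1)/2-n_2 = 0$ ensures that (i) delivers the factor $(1+\mod{v_2})^{-\al_2}$ directly, with no $\la_2'$-integration.

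The main obstacle I anticipate is the bookkeeping in (i): one has to verify that the growth $(1+\mod{\la_i})^{n_i-1}$ of $\mod{\bc_i}^{-2}$, the decay $(1+\mod{\la_i})^{-K_i}$ of the derivatives of $m$, and the monomials $\la_i^{-2j_i+k_i}$ coming from $(\cO_i^*)^{j_i}$ combine, after every possible Leibniz split, to give \emph{exactly} the stated exponent $n_i-1-2j_i-\al_i$ on the support of $M$. Once this is settled, parts (ii) and (iii) are essentially immediate from (i) together with the one-dimensional integral estimate above.
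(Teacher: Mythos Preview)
Your proposal is correct and follows essentially the same route as the paper's proof. For~\rmi\ you supply the explicit expansion of $(\cO_i^*)^j$ and the Leibniz bookkeeping that the paper merely alludes to as ``a straightforward consequence of an elementary induction argument''; for~\rmii\ and~\rmiii\ you differentiate under the integral, apply~\rmi\ with the same choice $j_i = n_i/2-1$ and multi-index $(1+\al_1,1+\al_2)$, and reduce to the same one-dimensional integral inequality (the paper splits at $\la'=1/2$ rather than at $\la'=1/\mod{v}$, but either cut works).
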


\begin{proof}
The proof of \rmi\ is a straightforward consequence of an elementary induction
argument and the estimate \eqref{f: HCest} for the Harish-Chandra function~$\mod{\bc}^{-2}$.  
We omit the details.  

Next we prove \rmii.  By differentiating under the integral sign, we are led to estimate
$$
\mod{\la_1'}^{\al_1+1}\, \mod{\la_2'}^{\al_2+1} \,
\bigmod{\big[\partial_{v_1}^{\al+1}\partial_{v_2}^{\al_2+1}
(\cO_1^*)^{n_1/2-1}(\cO_2^*)^{n_2/2-1}  M\big](v_1\la_1',v_2\la_2')},
$$
which, by \rmi, is dominated by  
\begin{equation} \label{f: function on square}
C\, \mod{\la_1'}^{\al_1+1}\, \mod{\la_2'}^{\al_2+1} \,
\frac{\norm{m}{\Marinfty{\fra^*}{N}}}{\big(1+\mod{v_1\la_1'}\big)^{\al_1}  
\,  \big(1+\mod{v_2\la_2'}\big)^{\al_2}}.  
\end{equation} 
Notice that 
$\ds
\int_0^1 
\frac{\mod{\la_1'}^{\al_1+1}}{\big(1+\mod{v_1\la_1'}\big)^{\al_1}}  
\, \frac{\wrt \la_1'}{\sqrt{1-\la_1'^2}} 
$
is uniformly bounded as long as $v_1$ is bounded, say when $v_1$ is in $[-1,1]$, and 
is majorized by
$$
\frac{2}{\sqrt 3} \int_0^{1/2} 
\frac{\mod{\la_1'}^{\al_1+1}}{\big(1+\mod{v_1\la_1'}\big)^{\al_1}}  \, \wrt \la_1'
+ \int_{1/2}^1 \frac{\mod{\la_1'}^{\al_1+1}}{\big(1+\mod{v_1/2}\big)^{\al_1}}  
\, \frac{\wrt \la_1'}{\sqrt{1-\la_1'^2}}
$$
when $\mod{v_1} \geq 1$.  
The second of these two integrals is clearly dominated by $C \, \big(1+\mod{v_1}\big)^{\al_1}$.
To bound the first, we change variables ($v_1\la_1'=\la_1''$) and are led to estimate
$$
\mod{v_1}^{-\al_1-2} \int_0^{v_1/2} \frac{\mod{\la_1''}^{\al_1+1}}{\big(1+\mod{\la_1''}\big)^{\al_1}}  
\, \wrt \la_1''
\leq C \, \mod{v_1}^{-\al_1}. 
$$
A similar estimate holds for 
$\ds
\int_0^1 
\frac{\mod{\la_2'}^{\al_2+1}}{\big(1+\mod{v_2\la_2'}\big)^{\al_2}}  
\, \frac{\wrt \la_2'}{\sqrt{1-\la_2'^2}}.
$
Thus, the integral of \eqref{f: function on square} on $[0,1]^2$ with respect to the measure 
$\ds \frac{\wrt \la_1'\wrt \la_2'}{\sqrt{1-\la_1'^2} \sqrt{1-\la_2'^2}}$ 
is dominated by 
$\ds C \, \frac{\bignorm{m}{\Marinfty{\fra^*}{N}}}{(1+\mod{v_1})^{\al_1}\, (1+\mod{v_2})^{\al_2}}$,
as required.

Finally, the proof of \rmiii\ follows the lines of the proof of \rmii.  We omit the details. 
\end{proof}

\begin{proposition} \label{p: kB011}
There exists a constant $C$ such that the following hold:
\begin{enumerate}
\item[\itemno1]
$
\bignorm{\kappa_{0,0}}{\lu{\BX}}
\leq C \bignorm{m_B}{\Marinfty{\fra^*}{N}};
$
\item[\itemno2]
$
\bignorm{\kappa_{1,0}}{\lu{\BX_2;\Cvp{\BX_1}}} 
\leq C \bignorm{m_B}{\Marinfty{\fra^*}{N}}.
$
A similar statement, with the roles of $\BX_1$ and $\BX_2$ interchanged, holds for $\kappa_{0,1}$;
\item[\itemno3]
$
\bignorm{\kappa_{1,1}}{\Cvp{\BX}}
\leq C \bignorm{m_B}{\Marinfty{\fra^*}{N}};
$
\item[\itemno4]
$
\bignorm{\kAA}{\Cvp{\BX}}
\leq C \bignorm{m_B}{\Marinfty{\fra^*}{N}}.
$
\end{enumerate}
\end{proposition}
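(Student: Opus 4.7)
The plan is to establish items (i)--(iii) separately and deduce (iv) by summation.

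For (i), the cutoff $\Phi_1\otimes\Phi_2$ confines the support of $\kappa_{0,0}$ to a fixed compact subset of $\BG$, while $\Psi_1\otimes\Psi_2$ restricts the $\la$-integration to a fixed compact subset of $\fra^*$. Since $A_1(\la_1,a_1)$ and $A_2(\la_2,a_2)$ are uniformly bounded on the relevant compact sets, the formula \eqref{f: kappa00} immediately yields the pointwise estimate $\bigmod{\kappa_{0,0}(a_1,a_2)}\le C\,\Phi_1(a_1)\,\Phi_2(a_2)\,\bignorm{m_B}{\Marinfty{\fra^*}{N}}$, and integration against $\de_1\otimes\de_2$ furnishes (i).

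For (ii) I would fix $a_2$ and view $\kappa_{1,0}(\cdot,a_2)$ on $\BG_1$ as the kernel $\kA$ from \eqref{f: kernels rank one} associated with the auxiliary one-variable multiplier
\[
  \tilde m_{a_2}(\la_1) := (1-\Psi_1)(\la_1)\,\Phi_2(a_2)\int_{\fra_2^*} A_2(\la_2,a_2)\,\Psi_2(\la_2)\,m_B(\la_1,\la_2)\wrt\nu_2(\la_2).
\]
Differentiating under the integral in $\la_1$ and using the Marcinkiewicz--Mihlin hypothesis, one checks that $\tilde m_{a_2}\in\Horminfty{\fra_1^*}{N_1}$, with $\bignorm{\tilde m_{a_2}}{\Horminfty{\fra_1^*}{N_1}}\le C\,\Phi_2(a_2)\,\bignorm{m_B}{\Marinfty{\fra^*}{N}}$ (the $a_2$-dependent prefactor $w(a_2)$ in $A_2(\la_2,a_2)$ is uniformly bounded on the compact support of $\Phi_2$). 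Lemma~\ref{l: one dim multipliers loc}(i) then gives $\bignorm{\kappa_{1,0}(\cdot,a_2)}{\Cvp{\BX_1}}\le C\,\Phi_2(a_2)\,\bignorm{m_B}{\Marinfty{\fra^*}{N}}$, and integration against $\de_2\wrt a_2$ over the compact support of $\Phi_2$ concludes (ii); the statement for $\kappa_{0,1}$ is symmetric.

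Part (iii) is the crucial step. I would use the Sonine representation $\cJ_\mu(x)=c_\mu\int_{-1}^{1}(1-s^2)^{\mu-1/2}\cos(xs)\wrt s$ (valid for $\mu>-1/2$; for half-integer $\mu$, which is what appears when the underlying $n_i$ is odd, a sine-based variant is used) to rewrite $A_1(\la_1,a_1)\,A_2(\la_2,a_2)$ as a weighted double integral involving $\cos(\la_1 t_1 s_1)$ and $\cos$ or $\sin(\la_2 t_2 s_2)$, where $t_i=\log a_i$. Exchanging the order of integration and performing $n_1/2-1$ (respectively $(n_1-1)/2$) integrations by parts in $\la_1$ via $\cO_1^*$, and analogously in $\la_2$, transfers the Plancherel density $\bigmod{\bc}^{-2}$ onto $m_B$ and produces, after elementary simplifications, a representation
\[
  \kappa_{1,1}(a_1,a_2)=w(a_1)w(a_2)\,\Phi_1(a_1)\,\Phi_2(a_2)\,\cF(t_1,t_2),
\]
in which $\cF$ is precisely the type of double Fourier-style integral estimated in Lemma~\ref{l: est for M}(ii)--(iii). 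Those bounds amount to saying that the ``effective multiplier'' on $\fra^*\cong\BR^2$ associated with $\kappa_{1,1}$ satisfies a Marcinkiewicz--Mihlin condition of the form covered by Theorem~\ref{t: classical Marc}. Applying that theorem and transferring back to $\BX$ via Corollary~\ref{c: Transference principle} yields $\kappa_{1,1}\in\Cvp{\BX}$.

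Finally, (iv) is obtained by assembling the three previous parts: $\kappa_{0,0}\in\lu{\BX}\subset\Cvp{\BX}$; the kernels $\kappa_{1,0}$ and $\kappa_{0,1}$ pass from $\lu{\BX_i;\Cvp{\BX_{3-i}}}$ into $\Cvp{\BX}$ by Lemma~\ref{l: iterated conv}(iii), using the boundedness of $\vp_{i\de(p)\rho_i}$ on the compact support of $\Phi_i$; and $\kappa_{1,1}\in\Cvp{\BX}$ by (iii). The chief obstacle is (iii): one must track the number of $\cO_i^*$-integrations by parts carefully so as to land in the exact regime of Lemma~\ref{l: est for M} while consuming no more than the $N_i>(n_i+3)/2$ derivatives permitted by the hypothesis, and one must treat the even- and odd-dimensional factors on an equal footing through the parallel cosine/sine Sonine formulae.
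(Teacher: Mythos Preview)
Your treatment of \rmi, \rmii\ and \rmiv\ matches the paper's essentially verbatim, so nothing to add there.

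For \rmiii\ your overall architecture is right---reduce to a Euclidean Fourier integral on $\BA\cong\BR^2$, invoke Lemma~\ref{l: est for M} and the classical Marcinkiewicz theorem (Theorem~\ref{t: classical Marc}), then transfer to $\BX$---but three points need tightening. First, your description conflates two mechanisms: the Sonine--Poisson integral you quote already expresses $\cJ_{n_j/2-1}(\la_j t_j)$ directly as a weighted cosine integral, whereas the $\cO^*$-integrations by parts come from the \emph{different} route the paper actually takes, namely the recursion $\cJ_{\nu+m}=(-1)^m\cO^m\cJ_\nu$ which reduces to $\cJ_{-1/2}=\sqrt{2/\pi}\cos$ (odd $n_j$) or to $\cJ_0$ and its sine-type integral representation (even $n_j$), \emph{and then} integrates by parts to move $\cO^{(n_j-1)/2}$ or $\cO^{n_j/2-1}$ onto $M$ as $\cO^*$. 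Pick one route; Lemma~\ref{l: est for M} is tailored to the second. Incidentally, the sine representation enters in the \emph{even} case (for $\cJ_0$), not the odd one as you suggest.

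Second, you omit a step that the paper singles out: once Marcinkiewicz gives you a convolutor $K\in\Cvp{\BA}$, you still have to multiply it by the smooth compactly supported factor $\Phi_1\Phi_2\,w_1w_2$ coming from $\eta(a)$. This is handled via the Fig\`a-Talamanca--Herz algebra: $C_c^\infty(\BA)\subset A_p(\BA)$ acts as pointwise multipliers on $\Cvp{\BA}$ (Theorem~\ref{t: Ap spaces}). Without this you have not shown that $(\de_1\otimes\de_2)\,\kappa_{1,1}$ lies in $\Cvp{\BA}$.

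Third, the final passage from $\Cvp{\BA}$ to $\Cvp{\BX}$ is \emph{not} Corollary~\ref{c: Transference principle}: that corollary is a semidirect-product statement tailored to $\OBN_1\BA_1\times\BG_2$ and is used later for the near-the-walls and far-away pieces. Here the kernel is compactly supported and $\BK$--bi-invariant, and the correct tool is the Coifman--Weiss transference for groups with a Cartan decomposition \cite[Theorem~8.7]{CW}, applied to $\BG=\BK\BA^+\BK$ with the density $\de$. The paper invokes this directly (see the end of the odd--odd case in the proof).
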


\begin{proof}
First we prove \rmi.  By using formula \eqref{f: SpherDecSmall I} and the estimates
\eqref{f: HCest} for the Harish-Chandra $\bc$ function we see that 
$\Psi_1 \otimes \Psi_2 (\cdot) \, \bigmod{A(\cdot,a_1,a_2)}\, \mod{\bc(\cdot)}^{-2}$
is uniformly bounded on the support of $\Phi_1\otimes\Phi_2$.  
Fr{}om \eqref{f: kappa00} we then deduce that
$$
\begin{aligned}
\bigmod{\kappa_{0,0}(a_1, a_2)}
& \leq C \bignorm{m_B}{\infty} \, \Phi_1(a_1)\, \Phi_2(a_2) \, \int_{\fra^*} 
     (\Psi_1 \otimes \Psi_2)(\la)  \wrt \la \\
& \leq C \bignorm{m_B}{\infty} \,  \Phi_1(a_1)\, \Phi_2(a_2).
\end{aligned}
$$
By integrating both sides in Cartan co-ordinates on $\BG$, and
using the trivial estimate $\bignorm{m_B}{\infty} \leq \bignorm{m_B}{\Marinfty{\fra^*}{N}}$, yields \rmi.

Next we prove \rmii.  At least formally, we may write
\begin{equation*}
\kappa_{1,0}(a_1, a_2)
	= \Phi_1(a_1)\, \int_{\fra_1^*} A_1(\la_1,a_1) \, \mu(\la_1,a_2) \wrt\nu_1(\la_1),
\end{equation*}
where
$$
\mu(\la_1,a_2)
:=  \Phi_2(a_2) 
	\int_{\fra_2^*} \Psi_2(\la_2)\, A_2(\la_2,a_2) \, [(1-\Psi_1)\,m_B(\cdot,\la_2)](\la_1) \wrt\nu_2(\la_2).
$$
We show that there exists a constant $C$, independent of $a_2$, such that, for $j=1,\ldots,N_1,4$ 
\begin{equation} \label{f: Horm mu}
	\sup_{\la_1 \in \fra_1^*} \, (1+ \mod{\la_1})^j  \bigmod{\partial_{\la_1}^j \mu(\la_1,a_2)} 
\leq C \, \Phi_2(a_2).  
\end{equation}
Indeed, 
$$
\la_1^j\partial^j_{\la_1} \mu(\la_1,a_2)
= \Phi_2(a_2)\, \int_{\fra_2^*} \Psi_2(\la_2) \, A_2(\la_2,a_2) \, 
\la_1^j \partial^j_{\la_1}[(1-\Psi_1)\,m_B(\cdot,\la_2)](\la_1) \wrt\nu_2(\la_2).
$$
Clearly $(1-\Psi_1)\,m_B(\cdot,\la_2)$ satisfies a Mihlin--H\"ormander condition of order $N_1 > (n_1+3)/2$ 
at infinity; hence there exists a constant~$C$ such that 
$$
\bigmod{\partial^j_{\la_1}[(1-\Psi_1)\,m_B(\cdot,\la_2)](\la_1)}
\leq C \bignorm{m_B}{\Horminfty{\fra_1^*}{N_1}} \, (1+\mod{\la_1})^{-j}  
\quant \la_2 \in \fra_2^*.  
$$
This and the fact that $A_2$ is bounded yield the required estimate \eqref{f: Horm mu}. 
Now, Lemma~\ref{l: one dim multipliers loc}~\rmi\ implies that 
$$
\bignorm{\kappa_{1,0}(\cdot, a_2)}{\Cvp{\BX_1}} 
\leq C \bignorm{m_B}{\Horminfty{\fra_1^*}{N_1}} \, \Phi_2(a_2).
$$
Integrating on $\BA_2$ with respect to the measure $\de_2(a_2) \wrt a_2$ yields the required estimate.  

We now prove \rmiii.  
Set
$
M := [(1-\Psi_1) \otimes (1-\Psi_2)] \,m_B\mod{\bc}^{-2}.
$
We shall distinguish three cases: both $n_1$ and $n_2$ are odd, both $n_1$ and $n_2$ are even,  
$n_1$ and $n_2$ have different parities.  We shall use repeatedly the fact that 
$\cJ_{\nu+m}(z_j) = (-1)^m \cO^m_j \cJ_\nu(z_j)$ (see, for instance, \cite[formula~5.3.7, p.~103]{L}).  

\label{pag:k24}
Assume first that both $n_1$ and $n_2$ are odd. Then
$$
\mJ_{n_j/2-1}(z)
= (-1)^{(n_j-1)/2} \cO^{(n_j-1)/2} \cJ_{-1/2}(z).
$$ 
This, the fact that $\ds \cJ_{-1/2}(z) = \sqrt{\frac{2}{\pi}} \, \cos z$, and the chain rule yield
\begin{equation*} 
\cJ_{n_j/2-1}(\la_j t_j)
= t_j^{1-n_j} \, \cO_{\la_j}^{(n_j-1)/2} \big[(\cos (\la_j t_j)\big].
\end{equation*}
Integrating by parts we obtain
$$
\kappa_{1,1}(a) 
= \eta(a) \int_{\fra_1^*\times\fra_2^*} P(\la_1,\la_2;\partial_{\la_1},\partial_{\la_2})
     M(\la_1,\la_2) \, \cos (\la_1t_1)\cos (\la_2t_2) \wrt \la_1\wrt \la_2,
$$
where 
$P(\la_1,\la_2;\partial_{\la_1},\partial_{\la_2})
:= [\cO_{\la_1}^{(n_1-1)/2}]^* [\cO_{\la_2}^{(n_2-1)/2}]^*$ and  
$
\ds \eta(a) 
:= c_0^{(1)} c_0^{(2)} \,\frac{\Phi_1(a_1)\Phi_2(a_2)}{t_1^{n_1-1}t_2^{n_2-1}} \, w_1(a_1)w_2(a_2).  
$
The functions $w_1$ and $w_2$ are the analogues on $\BA_1$ and $\BA_2$ of the function $w$ defined,
in the rank one case, below formula \eqref{f: SpherDecSmall I}.
Now, the function $P(\cdot,\cdot;\partial_{\la_1},\partial_{\la_2})M$ is Weyl-invariant; hence $\kappa_{1,1}$
may be rewritten as 
$$
\kappa_{1,1}(a) 
= \eta(a) \int_{\fra_1^*\times\fra_2^*} P(\la_1,\la_2;\partial_{\la_1},\partial_{\la_2})
     M(\la_1,\la_2) \, a_1^{i\la_1} a_2^{i\la_2} \wrt \la_1\wrt \la_2.
$$    
We denote by $K(a)$ the integral above.  
By Lemma~\ref{l: est for M}~\rmi, the function $P(\cdot,\cdot;\partial_{\la_1},\partial_{\la_2})M$
satisfies a Marcinkiewicz condition of order 
$(1,1)$ on $\fra_1^*\times\fra_2^*$.  Hence $K$ 
belongs to $\Cvp{\BA}$ by the classical Marcinkiewicz multiplier theorem.  
Since $A_p(\BA)$ is an algebra under pointwise 
multiplication and $C_c^\infty (\BA)$ is contained in $A_p(\BA)$ 
for all $p$ in $(1,\infty)$ (see  Theorem~\ref{t: Ap spaces} and, in particular, \cite[Theorem~3.4]{Co} 
for a proof of the last statement), the function 
$
\Phi_1(a_1)\Phi_2(a_2) \, \, w_1(a_1)w_2(a_2)\, 
K(a) 
$
is in $\Cvp{\BA}$, whence $\kappa_{1,1}$ is in $\Cvp{\BX}$, by 
the Coifman--Weiss transference result, as required.  

Assume now that both $n_1$ and $n_2$ are even.  Recall that 
$
\ds \cJ_0(z)
= \frac{2}{\pi} \int_1^\infty (v^2-1)^{-1/2}\, \sin v z \wrt v  
$ 
\cite[p.~180]{W}.  Changing variables, we see that 
\begin{equation} \label{f: cJ0}
\cJ_0(\la_j t_j)
= \frac{2}{\pi} \int_{\la_j}^{\infty}(v^2-\la_j^2)^{-1/2}\, \sin v t_j \wrt v.
\end{equation} 
By arguing much as in the previous case, we may write 
\begin{equation*} 
\cJ_{n_j/2-1}(\la_j t_j)
= t_j^{2-n_j} \cO^{n_j/2-1} \big(\cJ_0 (\la_j t_j)\big).
\end{equation*}
Integrating by parts we obtain
$$
\kappa_{1,1}(a) 
= \eta(a_1,a_2) \, t_1t_2 \int_{\fra_1^*\times\fra_2^*} P'(\la_1,\la_2;\partial_{\la_1},\partial_{\la_2})
     M(\la_1,\la_2) \, \cJ_0 (\la_1t_1)\cJ_0 (\la_2t_2) \wrt \la_1\wrt \la_2,
$$
where $\eta$ is as before, and 
$P'(\la_1,\la_2;\partial_{\la_1},\partial_{\la_2})
= \big[\cO_{\la_1}^{n_1/2-1}\big]^* \big[\cO_{\la_2}^{n_2/2-1}\big]^*$. By using \eqref{f: cJ0}
and Fubini's theorem, the last integral tranforms to  
$$
\frac{4}{\pi^2} \int_{\fra_1^*\times\fra_2^*} \wrt v_1\wrt v_2\,  \sin v_1t_1 \sin v_2t_2 
\int_0^{v_1} \int_0^{v_2} 
\frac{P'(\la_1,\la_2;\partial_{\la_1},\partial_{\la_2})M(\la_1,\la_2) }{\sqrt{v_1^2-\la_1^2} 
     \sqrt{v_2^2-\la_2^2}} \wrt \la_1\wrt \la_2.
$$
 By integration by parts the product of $t_1t_2$ and the last integral may be written as a constant times  
$$
\int_{\fra_1^*\times\fra_2^*} \wrt v_1\wrt v_2 \cos v_1t_1 \cos v_2t_2 \, 
\partial_{v_1}\partial_{v_2} \int_0^{v_1} \int_0^{v_2} 
\frac{P'(\la_1,\la_2;\partial_{\la_1},\partial_{\la_2})M(\la_1,\la_2) }{\sqrt{v_1^2-\la_1^2} 
     \sqrt{v_2^2-\la_2^2}} \wrt \la_1\wrt \la_2,
$$
which, by a change of variables ($v_1\la_1' = \la_1$, $v_2\la_2' = \la_2$) and the Weyl-invariance 
of $\partial_{v_1}\partial_{v_2} J(v_1,v_2)$,  may be rewritten as 
$
\ds \int_{\fra_1^*\times\fra_2^*} a_1^{i v_1} a_2^{i v_2}
\, \partial_{v_1}\partial_{v_2} J(v_1,v_2)\wrt v_1\wrt v_2,
$
where 
$$
J(v_1,v_2)
:= \int_0^1 \int_0^1 \frac{\big[P'(\la_1,\la_2;\partial_{\la_1},\partial_{\la_2})M
     \big](v_1\la_1',v_2\la_2')}{\sqrt{1-\la_1'^2} \sqrt{1-\la_2'^2}} \wrt \la_1'\wrt \la_2'.
$$
By Lemma~\ref{l: est for M}~\rmii\ the function $\partial_{v_1}\partial_{v_2}J$ satisfies 
a Marcinkiewicz condition of order $(1,1)$ on $\fra_1^*\times \fra_2^*$, 
whence its inverse Mellin transform belongs to $\Cvp{\BA}$ for all $p$ in $(1,\infty)$.  
By arguing much as in the conclusion of the first part \rmiii, we see that $\kappa_{1,1}$ is in $\Cvp{\BX}$, as required.  

Finally, assume that $n_1$ is even and $n_2$ is odd.  Define
$
P''(\la_1,\la_2;\partial_{\la_1},\partial_{\la_2})
= [\cO_{\la_1}^{n_1/2-1}]^* [\cO_{\la_2}^{(n_2-1)/2}]^*.
$ 
Integrating by parts, and mimicking the reasoning in \rmi\ and \rmii, we see that 
$$
\kappa_{1,1}(a) 
= \frac{2}{\pi} \, \eta(a_1,a_2) \, 
\int_{\fra_1^*\times\fra_2^*} \cos v_1t_1 \cos v_2t_2 \, \partial_{v_1} H(v_1,v_2) \wrt v_1\wrt v_2, 
$$
where
$$
H(v_1,v_2)
:= \int_0^{v_1} \frac{P''(\la_1,v_2;\partial_{\la_1},\partial_{v_2})M(\la_1,v_2)}{
\sqrt{v_1^2-\la_1^2}} \wrt \la_1.
$$
A reasoning similar to that used in the proof of \rmi\ and \rmii\ proves \rmiii.  We omit the details.  

Finally, \rmiv\ follows from \rmi-\rmiii\ and the fact that 
$\kAA = \kappa_{0,0} + \kappa_{1,0} + \kappa_{0,1} + \kappa_{1,1}$.

The proof of the proposition is complete.
\end{proof}

\section{Analysis of the operator $B_1$} \label{s: loc-glob} 

In this section we estimate $k_{B_1} (a)$ where $a=(a_1,a_2)$, $a_1$ is large and $a_2$ is close to the identity.  Recall that
$k_{B_1} = \big[(1-\Phi_1)\otimes\Phi_2 + \Phi_1\otimes(1-\Phi_2)\big]\,k_{B}$.  Thus, for such values of $a_1$ and 
$a_2$ we simply have $k_{B_1} = \big[(1-\Phi_1)\otimes\Phi_2 \big]\,k_{B}$.  Also, recall that 
$$
\kuA (a)
=  [1-\Phi_1(a_1)]\, \Phi_2(a_2)\int_{\fra^*} \!\! a_1^{i\la_1-\rho_1}\,
     \, A_2(\la_2,a_2)\, \cinvmB(\la_1,\la_2) \wrt \la_1 \wrt \nu(\la_2) 
\quant a \in \BA^+;
$$
here, with a slight abuse of notation we denote by $\bc_1$ 
the natural extension to $\fra^*$ of the Harish-Chandra $\bc$-function on $\fra_1^*$
obtained by making it constant on sections.  We define 
\begin{equation*} 
\begin{aligned}
\MAd(\la_1,a_2)
& := \Phi_2(a_2) \int_{\fra_2^*} A_2(\la_2,a_2)\, 
       m_B(\la_1,\la_2) \wrt \nu(\la_2) \\ 
\MRd(\la_1,a_2)
& := \Phi_2(a_2) \int_{\fra_2^*} R_2(\la_2,a_2)\, 
       m_B(\la_1,\la_2) \wrt \nu(\la_2) \\ 
\end{aligned}
\end{equation*}
and $\Mpd := \MAd + \MRd$.  We also define
\begin{equation} \label{f: N}
\begin{aligned}
\Nu(a_1,\la_2)
& := [1-\Phi_1(a_1)] \, a_1^{\de(p)\rho_1} \int_{\fra_1^*} a_1^{i\la_1}\, 
       \cinvmB(\la_1,\la_2) \wrt \la_1  \\  
\Nou(a_1,\la_2)
& := [1-\Phi_1(a_1)] \, \int_{\fra_1^*} a_1^{i\la_1-\rho_1-2\al_1}\,\om_1(\la_1,a_1)  
       \, \cinvmB(\la_1,\la_2) \wrt \la_1  \\  
\end{aligned}
\end{equation}
Thus,  
\begin{equation*} 
\begin{aligned}
\kuA (a_1,a_2)
& =  [1-\Phi_1(a_1)]\, \int_{\fra_1^*} \!\! a_1^{i\la_1-\rho_1}\, \cinvMAd(\la_1,a_2) \wrt \la_1 \\ 
	& =  \Phi_2(a_2)\, \int_{\fra_2^*} \!\! \, A_2(\la_2,a_2)\,a_1^{-2\rho_1/p} \Nu(a_1,\la_2) \wrt \nu_2(\la_2). 
\end{aligned}
\end{equation*} 
For notational convenience, for each $p$ in $(1,2)$ and $\vep>0$ (small) 
we set $\rho_1^{p,\vep} := (\de(p)-\vep) \rho_1$ and $\rho_2^{p,\vep} := (\de(p)-\vep) \rho_2$.
Inspired by the notation adopted in the proof of Lemma~\ref{l: one dim multipliers glob}, we define 
\begin{equation} \label{f: phip product}
\begin{aligned} 
\phipuA(a_1,a_2)
&           := \big[1-\Phi_1(a_1)\big]\, a_1^{\de(p)\rho_1} \int_{\fra_1^*} a_1^{i\la_1} \, \cinvMAd(\la_1,a_2)\wrt\la_1 \\
& \phantom{:}= \Phi_2(a_2) \int_{\fra_2^*} A_2(\la_2, a_2) \, \Nu(a_1,\la_2) \wrt\nu_2(\la_2)
\end{aligned}
\end{equation}
for every $a\in \BA_1\times\BA_2^+$.  
Changing the path of integration from $\fra_1$ to $\fra_1 + i\rho_1^{p,\vep(a_1)}$, we obtain the useful formula 
$$
\phipuA(a_1,a_2)
:= \big[1-\Phi_1(a_1)\big]\, \e^{\sign(\log a_1) \vep(a_1) \mod{\rho_1}}
\int_{\fra_1^*} a_1^{i\la_1} \, \cinvMAd(\la_1+i\rho_1^{p,\vep(a_1)},a_2)\wrt\la_1
$$
where $\vep(a_1) = \vep/ \mod{\log a_1}$.  Observe that, trivially, 
\begin{equation} \label{f: kuA and phip}
\kuA (a_1,a_2)
= a_1^{-2\rho_1/p} \, \phipuA(a_1,a_2)
\quant (a_1,a_2)\in \BA^+.  
\end{equation}
We interpret $\phipuA$ as a $\BK_2$--bi-invariant function on $\OBN_1\BA_1\times \BX_2$, 
and define the functions
$$
\begin{aligned}
\taupuuA (v_1b_1,x_2)
	& := \chi_{\BA_1^+}(b_1) \, P(v_1)^{2/p} \, \big[\exp\big(E(v_1,b_1)H_0\big)-1\big] \, \phipuA\big([v_1b_1]_+,x_2\big) \\
\taupduA (v_1b_1,x_2)
	& := \chi_{\BA_1^+}(b_1) \, P(v_1)^{2/p} \, \big[\phipuA\big([v_1b_1]_+,a_2\big) - \phipuA(b_1,x_2) \big]\\
\tauptuA (v_1b_1,x_2)
	& := \chi_{\BA_1^+}(b_1) \, P(v_1)^{2/p} \, \phipuA(b_1,x_2) 
\end{aligned}
$$
for every $v_1\in \OBN$,  $b_1 \in \BA$ and $x_2 \in \BX_2$.
Hereafter, with a slight abuse of notation, $\chi_{\BA_1^+}$ will also denote the function on $\OBN_1\BA_1\times \BG_2$,
defined by $\chi_{\BA_1^+}(v_1b_1, g_2) = \chi_{\BA_1^+}(b_1)$, and similarly for $\chi_{\BA_1^-}$.  This 
notation is consistent with that adopted in Section~\ref{s: Background} for rank one symmetric spaces.  

\begin{lemma} \label{l: multipliers loc glob}
Suppose that $N> (n+\tre)/2$ and $1<p<2$.  Then there exists
a constant~$C$ such that for every $m$ in $\Mar{\TWp}{N}$ the following hold:
\begin{enumerate}
\item[\itemno1]
for every $a_1\in \BA_1^+$ 
$$
\begin{aligned}
\bignorm{\phipuA(a_1,\cdot)}{\Cvp{\BX_2}}
& \leq C  \bignorm{m_B}{\Mar{T_p}{N}} \, \frac{1-\Phi_1(a_1)}{\log a_1}\\ 
\bignorm{a_1\partial_{a_1}\phipuA(a_1,\cdot)}{\Cvp{\BX_2}}  
& \leq C  \bignorm{m_B}{\Mar{T_p}{N}} \, \frac{1-\Phi_1(a_1)}{\log^2 a_1}; 
\end{aligned}
$$
\item[\itemno2]
$\kuA(a_1,a_2) = a_1^{-2\rho_1/p} \, \phipuA(a_1, a_2)$ for all $a_1$ in $\BA_1^+$ and $a_2$ in $\BA_2^+$, and 
$$
\bignorm{\kuA(v_1 b_1, \cdot)}{\Cvp{\OBN_2\BA_2}} 
\leq C \bignorm{m}{\Mar{T_p}{N}} \big[1-\Phi_1([v_1b_1]_+)\big] \,  b_1^{-2\rho_1/p}\, P(v_1)^{2/p} 
$$  
for every $v_1\in\OBN_1$ and $b_1\in\BA_1^+$;  
\item[\itemno3]
$\ds \int_{\OBN_1}\int_{\BA_1} \bignorm{[\chi_{\BA_1^-}\cD_1^{1/p}\kuA](v_1b_1,\cdot)}{\Cvp{\OBN_2\BA_2}} 
\wrt v_1  \dtt {b_1}  
\leq C \bignorm{m}{\Mar{T_p}{N}}$;
\item[\itemno4]
$\ds \bignorm{\phipuA([v_1b_1]_+,\cdot)-\phipuA(b_1,\cdot)}{\Cvp{\BX_2}}
\leq C  \bignorm{m_B}{\Mar{\TWp}{N}}\,  \frac{\mod{\al(H(v_1))}+1}{1+\log^2b_1}.
$ 
\end{enumerate}
\end{lemma}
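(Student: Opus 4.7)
The plan is to treat parts (i)--(iv) as the $\Cvp{\BX_2}$-valued analogues of the scalar estimates in Lemma~\ref{l: one dim multipliers glob}, with $a_2$ (resp.\ $x_2$) playing the role of a parameter throughout and Lemma~\ref{l: one dim multipliers loc} providing the passage from an $\fra_2^*$-multiplier to its $\Cvp{\BX_2}$ norm.

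\textbf{For (i).} The second line of \eqref{f: phip product} exhibits $\phipuA(a_1,\cdot)$ as a kernel of exactly the form of $\kA$ in \eqref{f: kernels rank one} on $\BX_2$ with $\fra_2^*$-multiplier $\Nu(a_1,\cdot)$. Lemma~\ref{l: one dim multipliers loc}~\rmi\ on $\BX_2$ therefore gives
\[
\bignorm{\phipuA(a_1,\cdot)}{\Cvp{\BX_2}}
\leq C \bignorm{\Nu(a_1,\cdot)}{\Horminfty{\fra_2^*}{N_2}},
\]
so it suffices to show the right-hand side is at most $C \bignorm{m_B}{\Mar{T_p}{N}}\,(1-\Phi_1(a_1))/\log a_1$. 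I would shift the $\la_1$-contour in \eqref{f: N} from $\fra_1^*$ to $\fra_1^*+i\rho_1^{p,\vep(a_1)}$ (in the spirit of the second line of \eqref{f: phipI}) and then integrate by parts $N_1$ times in $\la_1$, producing the decisive $(\log a_1)^{-N_1}$ factor. Leibniz's rule applied to $\check\bc_1^{-1}\partial_{\la_2}^{j_2}m_B$ yields mixed derivatives controlled by the Marcinkiewicz hypothesis
\[
\bigmod{\partial_{\la_1}^{j_1}\partial_{\la_2}^{j_2} m_B\big(\la_1+i\rho_1^{p,\vep(a_1)},\la_2\big)}
\leq C\,\bignorm{m_B}{\Mar{T_p}{N}}\,
 \Theta_p^{(1)}(\la_1+i\rho_1^{p,\vep(a_1)})^{-j_1}\,\Theta_p^{(2)}(\la_2)^{-j_2},
\]
which, combined with the Harish-Chandra bound \eqref{f: HCest}, renders the $\la_1$-integral absolutely convergent under the hypothesis $N_1>(n_1+3)/2$. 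The second estimate follows by differentiating in $a_1$, which forces an additional integration by parts and delivers the extra $(\log a_1)^{-1}$.

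\textbf{For (ii)--(iv).} The identity $\kuA(a_1,a_2) = a_1^{-2\rho_1/p}\phipuA(a_1,a_2)$ is obtained by shifting the $\la_1$-contour in the defining integral of $\kuA$ to $\fra_1^*+i\de(p)\rho_1$ and matching with the first line of \eqref{f: phip product}. For the $\Cvp{\OBN_2\BA_2}$ bound, I would combine (i) with the elementary estimate $\bigmod{[v_1b_1]_+^{-2\rho_1/p}} \leq C\, b_1^{-2\rho_1/p}\,P(v_1)^{2/p}$, which is a direct consequence of \eqref{f: decom Iwas} because $E(v_1,b_1)\leq 2b_1^{-2}\leq 2$ on $\BA_1^+$; the resulting $\Cvp{\BX_2}$ bound on $\phipuA([v_1b_1]_+,\cdot)$ transfers to a $\Cvp{\OBN_2\BA_2}$ bound via the principle that a $\BK_2$-bi-invariant function of compact support in $\BA_2$ (recall the factor $\Phi_2$ in the definition of $\phipuA$) has comparable convolution norms on the two groups. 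Part (iii) then mirrors the Abel-transform argument of Lemma~\ref{l: one dim multipliers glob}~\rmiii, replacing absolute values by $\Cvp{\OBN_2\BA_2}$ norms throughout and invoking (ii) instead of the rank-one pointwise bound. For (iv), the fundamental theorem of calculus yields
\[
\phipuA([v_1b_1]_+,x_2) - \phipuA(b_1,x_2)
= \int_{b_1}^{[v_1b_1]_+} (a\partial_a\phipuA)(a,x_2)\dtt a;
\]
taking $\Cvp{\BX_2}$-norms via Minkowski's inequality for vector-valued integrals, applying the second estimate in (i), and using $\log[v_1b_1]_+ - \log b_1 \leq \al(H(v_1))+2$ from \eqref{f: decom Iwas} one concludes exactly as in the proof of \rmiv\ of Lemma~\ref{l: one dim multipliers glob}.

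\textbf{Main obstacle.} The conceptually new point, absent from Lemma~\ref{l: one dim multipliers glob}, is the verification in (i) that the Hörmander-at-infinity norm in $\la_2$ of $\Nu(a_1,\cdot)$ inherits the $(\log a_1)^{-1}$ decay produced by the contour-shift-plus-integration-by-parts procedure in $\la_1$. This works precisely because the Marcinkiewicz hypothesis furnishes \emph{mixed} derivative bounds with decoupled decay rates in $\la_1$ and $\la_2$, allowing the one-variable arguments of Ionescu \cite{I1} to run in parallel in each factor. The threshold $N > (n+\tre)/2$ reflects this decoupling: $(n_1+3)/2$ derivatives in $\la_1$ dominate $|\check\bc_1^{-1}|$ via \eqref{f: HCest}, while $(n_2+3)/2$ derivatives in $\la_2$ are consumed by Lemma~\ref{l: one dim multipliers loc}~\rmi\ on the $\BX_2$ side.
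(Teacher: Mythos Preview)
Your proposal is correct and follows essentially the same approach as the paper: reduce to the Hörmander-at-infinity norm of $\Nu(a_1,\cdot)$ via Lemma~\ref{l: one dim multipliers loc}~\rmi, then run Ionescu's contour-shift-plus-integration-by-parts in $\la_1$ (the paper packages this step by citing Lemma~\ref{l: phip basic} directly), with the Abel-transform trick for \rmiii\ and the fundamental theorem of calculus for \rmiv. The only cosmetic difference is that in \rmiv\ the paper first passes to $\bignorm{\Nu([v_1b_1]_+,\cdot)-\Nu(b_1,\cdot)}{\Horminfty{\fra_2^*}{N_2}}$ and then applies the mean value theorem at the scalar level, whereas you take the $\Cvp{\BX_2}$-norm inside the integral; both are valid.
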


\begin{proof}
We prove \rmi.  
Recall formula \eqref{f: phip product} that we rewrite here for the reader's convenience
\begin{equation} \label{f: phip in terms of Nu}
\phipuA(a_1,a_2)
= \Phi_2(a_2) \int_{\fra_2^*} A_2(\la_2,a_2) \, \Nu(a_1,\la_2) \wrt\nu_2(\la_2),  
\end{equation}  
By moving the contour of integration from $\fra_1$ to $\fra_1+i\de(p)\rho_1$ in the definition of $\Nu$, we see that  
\begin{equation*} 
\Nu(a_1,\la_2)
= [1-\Phi_1(a_1)] \int_{\fra_1^*} a_1^{i\la_1}\, 
       \cinvmB(\la_1+i\de(p) \rho_1,\la_2) \wrt \la_1.
\end{equation*}
Now, Lemma \ref{l: one dim multipliers loc}~\rmi\ (with $\Nu(a_1,\cdot)$ in place of $m$) 
implies that 
\begin{equation*} 
\bignorm{\phipuA(a_1,\cdot)}{\Cvp{\BX_2}} 
\leq C \bignorm{\Nu(a_1,\cdot)}{\Horminfty{\fra_2^*}{N_2}}.  
\end{equation*}
Thus, the first estimate in \rmi\ will follow from 
\begin{equation} \label{f: legame 1}
\bignorm{\Nu(a_1,\cdot)}{\Horminfty{\fra_2^*}{N_2}} 
\leq C \bignorm{m_B}{\Mar{T_p}{N}}\, \frac{1-\Phi_1(a_1)}{\log a_1} 
\quant a_1\in \BA_1^+.  
\end{equation}
To prove \eqref{f: legame 1} we observe that, 
by Lemma~\ref{l: phip basic} (with $\partial_{\la_2}^{j_2}m_B(\cdot,\la_2)$ in place of $m$), 
$$
\begin{aligned}
\bigmod{\partial_{\la_2}^{j_2}\Nu(a_1,\la_2)}
& = [1-\Phi_1(a_1)] \, \Bigmod{\int_{\fra_1^*} a_1^{i\la_1}\, \cinvderladjdmB(\la_1+i\de(p) \rho_1,\la_2) \wrt \la_1} \\
& \leq C \bignorm{\partial_{\la_2}^{j_2}m_B(\cdot,\la_2)}{\Horm{T_p^{(1)}}{N_1}} \,  \frac{1-\Phi_1(a_1)}{\log a_1} 
\end{aligned}
$$
for all $a_1\in \BA_1^+$ and $j_2\in \{0,1,\cdots,N_2\}$.  It is straightforward to check that there 
exists a constant $C$ such that 
$\ds\bignorm{\partial_{\la_2}^{j_2}m_B(\cdot,\la_2)}{\Horm{T_p^{(1)}}{N_1}} 
\leq C\, \frac{\bignorm{m_B}{\Mar{T_p}{N}}}{(1+\mod{\la_2})^{j_2}}$.  
By combining the last two estimates, we get \eqref{f: legame 1}, thereby concluding the proof of the first 
estimate in \rmi.

As to the second bound in \rmi, we start from formula \eqref{f: phip in terms of Nu}.  
Then Lemma~\ref{l: one dim multipliers loc}~\rmi\ implies that 
$$
\bignorm{a_1\partial_{a_1} \phipuA(a_1,\cdot)}{\Cvp{\BX_2}}
\leq C \bignorm{a_1\partial_{a_1}\Nu(a_1,\cdot)}{\Horminfty{\fra_2^*}{N_2}}.  
$$
The required estimate will follow from 
\begin{equation} \label{f: legame 2}
\bignorm{a_1\partial_{a_1}\Nu(a_1,\cdot)}{\Horminfty{\fra_2^*}{N_2}} 
\leq C \bignorm{m_B}{\Mar{T_p}{N}}\, \frac{1-\Phi_1(a_1)}{\log^2 a_1} 
\quant a_1\in \BA_1^+.  
\end{equation}
Suppose that $j_2\in \{0,1,\ldots,N_2\}$, and observe that 
$$
\partial_{\la_2}^{j_2} \Nu(a_1,\la_2)
= [1-\Phi_1(a_1)] \, \e^{\vep\mod{\rho_1} \sign\, a_1} \int_{\fra_1^*} a_1^{i\la_1} \, 
   \cinvderladjdmB(\la_1+i\rho_1^{p,\vep(a_1)}, \la_2) \wrt \la_1,  
$$
which follows by changing of the path of integration in the integral defining 
$\Nu$ (see \eqref{f: N}) from $\fra_1^*$ to $\fra_1^*+i\rho_1^{p,\vep(a_1)}$.  
Notice that the factor $\e^{\vep\mod{\rho_1} \sign\, a_1}$
is constant on $\BA_1^-$ and on $\BA_1^+$, hence its derivative on $\BA_1\setminus \{0\}$ vanishes.  Therefore
$a_1\partial_{a_1} \partial_{\la_2}^{j_2} \Nu(a_1,\la_2)$ is the sum~of 
\begin{equation} \label{f: legame 3}
\ds -a_1\, \Phi_1'(a_1) \, \e^{\vep\mod{\rho_1} \sign\, a_1} 
\int_{\fra_1^*} a_1^{i\la_1} \, \cinvderladjdmB(\la_1+i\rho_1^{p,\vep(a_1)}, \la_2) \wrt \la_1  
\end{equation} 
and 
$$
\Om(a_1,\la_2) 
:= i\, [1-\Phi_1(a_1)] \, \e^{\vep\mod{\rho_1} \sign\, a_1} 
\int_{\fra_1^*} a_1^{i\la_1} \, 
    \la_1\, \cinvderladjdmB(\la_1+i \rho_1^{p,\vep(a_1)}, \la_2) \wrt \la_1.  
$$
The first of the two terms above is much easier to estimate than the second, and we leave it to the 
interested reader.  We integrate by parts $N_1$ times, and obtain that the integral above may be written as 
$$
\frac{1}{(i\log a_1)^{N_1}} \int_{\fra_1^*} a_1^{i\la_1} \, 
\big(\la_1\, \partial_{\la_1}^{N_1}+ {N_1}\partial_{\la_1}^{N_1-1}\big)\big\{\cinvderladjdmB(\cdot+i \rho_1^{p,\vep(a_1)}, \la_2)
\big\}(\la_1) \wrt \la_1. 
$$
A straightforward calculation shows that the absolute value of the integrand is dominated by 
$$
C \, \bignorm{\partial_{\la_2}^{j_2}m_B(\cdot,\la_2)}{\Horm{T_p^{(1)}}{N_1}} \, (1+\mod{\la_2})^{-j_2}  \,{
\Big[\frac{\la_1}{\mod{\la_1-i\rho_1\vep/\log a_1}^{N_1}} + \frac{1}{\mod{\la_1-i\rho_1\vep/\log a_1}^{N_1-1}}\Big]},   
$$
whence
$$
\bigmod{\Om(a_1,\la_2)} 
\leq C \bignorm{\partial_{\la_2}^{j_2}m_B(\cdot,\la_2)}{\Horm{T_p^{(1)}}{N_1}} \, (1+\mod{\la_2})^{-j_2}  
\, \frac{1-\Phi_1(a_1)}{\log^2 a_1}. 
$$
Since the term \eqref{f: legame 3} satisfies a similar estimate, 
$$
\bigmod{\partial_{\la_2}^{j_2} \Nu(a_1,\la_2)} 
\leq C \bignorm{\partial_{\la_2}^{j_2}m_B(\cdot,\la_2)}{\Horm{T_p^{(1)}}{N_1}} \, (1+\mod{\la_2})^{-j_2}  
\, \frac{1-\Phi_1(a_1)}{\log^2 a_1},
$$
the bound \eqref{f: legame 2} follows.  This completes the proof of \rmi.  

The first equality in \rmii\ is formula \eqref{f: kuA and phip} above.  
For each $v_1$ in $\OBN_1$ and $b_1$ in $\BA_1$ the function $\kuA(v_1b_1,\cdot)$ is $\BK_2$--bi-invariant. 
Therefore $\bignorm{\kuA (v_1b_1,\cdot)}{\Cvp{\OBN_2\BA_2}} = \bignorm{\kuA (v_1b_1,\cdot)}{\Cvp{\BX_2}}$,  
and 
$$
\begin{aligned}
\bignorm{\kuA (v_1b_1,\cdot)}{\Cvp{\BX_2}}
& =    [v_1b_1]_+^{-2\rho_1/p} \bignorm{\phipuA (v_1b_1,\cdot)}{\Cvp{\BX_2}} \\ 
& \leq C \bignorm{m_B}{\Mar{T_p}{N}}\, \big[1-\Phi_1(v_1b_1)\big] \,[v_1b_1]_+^{-2\rho_1/p} \\ 
\end{aligned}
$$
by \rmi.  Now, the fact that $[v_1b_1]_+^{-2\rho_1/p} = P(v_1)^{2/p} \,  b_1^{-2\rho_1/p} \, 
\e^{-(2/p)\mod{\rho_1} E(v_1,b_1)} \leq P(v_1)^{2/p} \,  b_1^{-2\rho_1/p}$ (for $b_1 \in \BA_1^+$ by assumption) 
implies the required estimate.   

We now prove \rmiii.  
Set $F(v_1b_1) := \bignorm{\kuA (v_1b_1,\cdot)}{\Cvp{\OBN_2\BA_2}}$.  
Notice that $F$, interpreted as a function on $\BX_1$, is $\BK_1$--invariant.  
Thus, its Abel transform 
$\ds \cA_1 F(b_1) := b_1^{\rho_1} \int_{\OBN_1} F(v_1b_1) \wrt v_1$
is a Weyl-invariant function on $\BA_1$, whence $\cA_1 F(b_1) = \cA_1 F(b_1^{-1})$ and 
the left hand side of the required estimate may be re-written~as  
$$
\int_{\BA_1^-} b_1^{\de(p)\rho_1} \, \cA_1 F(b_1) \dtt{b_1}  
= \int_{\BA_1^+} b_1^{-\de(p)\rho_1} \, \cA_1 F(b_1) \dtt{b_1}.    
$$
By \rmii, the right hand side can be  majorized by 
$$
\begin{aligned}
C \bignorm{m_B}{\Mar{T_p}{N}} \int_{\OBN_1} \wrt v_1 \, P(v_1)^{2/p} 
	\int_{\BA_1^+} b_1^{2(1/p'-1/p)\rho_1} \, \dtt{b_1} 
& \leq C \bignorm{m_B}{\Mar{T_p}{N}}; 
\end{aligned}
$$
we have used 
the fact that $1/p'<1/p$ (because $1<p<2$) in the last inequality. 
This concludes the proof of \rmiii.  

To prove \rmiv\ we start from formula \eqref{f: phip in terms of Nu}, and observe that 
Lemma~\ref{l: one dim multipliers loc}~\rmi\ implies the bound
$$
\begin{aligned}
\bignorm{\phipuA([v_1b_1]_+,\cdot) - \phipuA(b_1,\cdot)}{\Cvp{\BX_2}}
& \leq C \bignorm{\Nu([v_1b_1]_+,\cdot) - \Nu(b_1,\cdot)}{\Horminfty{\fra_2^*}{N_2}} \\
& \leq C \int_{b_1}^{[v_1b_1]_+} \!\! \bignorm{\partial_{a_1} \Nu(a_1,\cdot)]}{\Horminfty{\fra_2^*}{N_2}} \wrt a_1, 
\end{aligned}
$$
where the second inequality follows from the mean value theorem.  By \eqref{f: legame 2}, the last integral is dominated by 
$$
\begin{aligned}
C \bignorm{m_B}{\Mar{T_p}{N}} \int_{b_1}^{[v_1b_1]_+}\frac{1-\Phi_1(a_1)}{\log^2 a_1} \dtt {a_1} 
& \leq C \bignorm{m_B}{\Mar{T_p}{N}} \, \frac{1-\Phi_1(b_1)}{\log^2 b_1} \, [\log{[v_1b_1]_+}-\log {b_1}] \\ 
& \leq C \bignorm{m_B}{\Mar{T_p}{N}} \, \frac{\log{[v_1b_1]_+}-\log {b_1}}{1+\log^2 b_1}.
\end{aligned}
$$
Notice that $\log{[v_1b_1]_+}-\log {b_1} = \al_1(H(v_1)) + E(v_1,b_1)\leq \al_1(H(v_1)) + 2$ for every $v_1\in \OBN_1$
and every $b_1\in \BA_1^+$.  By combining the estimates above, we obtain the required bound.  
\end{proof}

\begin{proposition} \label{p: kB1}
There exists a constant $C$ such that the following hold:
\begin{enumerate}
\item[\itemno1]
$\bignorm{\kop}{\Cvp{\BX}} 
\leq C \bignorm{m_B}{\Mar{T_p}{N}}$;  
\item[\itemno2]
$\bignorm{\kuR}{\Cvp{\BX}} 
\leq C \bignorm{m_B}{\Mar{T_p}{N}}$;  
\item[\itemno3]
$\bignorm{\kuA}{\Cvp{\BX}} 
\leq C \bignorm{m_B}{\Mar{T_p}{N}}$.  
\end{enumerate}
\end{proposition}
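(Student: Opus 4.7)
The strategy is to reproduce, in operator-valued form, Ionescu's transference-based approach for the rank-one case (Lemma~\ref{l: one dim multipliers glob}), using the estimates assembled in Lemma~\ref{l: multipliers loc glob}. Parts~\rmi\ and~\rmii\ can be reduced to a single rank-one bound by integrating out one variable via the decay supplied, respectively, by $\om_1$ and by $R_2$; part~\rmiii\ is the genuine obstacle and requires the full operator-valued machinery.

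For~\rmi, I would apply Lemma~\ref{l: iterated conv}~\rmiii\ to reduce matters to the finiteness of $\int_{\BG_1}\bignorm{\kop(g_1,\cdot)}{\Cvp{\BX_2}}\vp_{i\de(p)\rho_1}(g_1)\wrt g_1$. Writing $\kop(a_1,a_2) = \Phi_2(a_2)\cH_2^{-1}[\tilde m_\om(a_1,\cdot)](a_2)$ with $\tilde m_\om(a_1,\la_2) := [1-\Phi_1(a_1)]\int_{\fra_1^*}a_1^{i\la_1-\rho_1-2\al_1}\om_1(\la_1,a_1)\cinvmB(\la_1,\la_2)\wrt\la_1$, shifting the contour to $\fra_1^*+i\rho_1^{p,\vep(a_1)}$ and integrating by parts $N_1$ times as in \eqref{f: pointwise est kappaominfty} (using \eqref{f: estIonescu}) yields $\bignorm{\tilde m_\om(a_1,\cdot)}{\Horm{T_p^{(2)}}{N_2}} \le C \bignorm{m_B}{\Mar{T_p}{N}} [1-\Phi_1(a_1)] a_1^{(\vep-2/p)\rho_1-2\al_1}(\log a_1)^{-N_1}$, uniformly in $\la_2$. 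Lemma~\ref{l: one dim multipliers glob}~\rmv\ on $\BX_2$, combined with the fact that $\Phi_2 \in A_p(\BX_2)$ is a pointwise multiplier of $\Cvp{\BX_2}$ (Theorem~\ref{t: Ap spaces}~\rmiii), transfers this bound to $\bignorm{\kop(a_1,\cdot)}{\Cvp{\BX_2}}$; the $a_1^{-2\al_1}$ gain secures convergence against $\vp_{i\de(p)\rho_1}\de_1$ on $\BA_1^+$. For~\rmii, write $\kuR(a_1,a_2)=[1-\Phi_1(a_1)]\int_{\fra_1^*}a_1^{i\la_1-\rho_1}[\check{\bc}_1^{-1}\MRd](\la_1,a_2)\wrt\la_1$, exhibiting $\kuR(\cdot,a_2)$ as a rank-one $\ku$-type kernel with multiplier $\MRd(\cdot,a_2)$. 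Pushing the sup in $\bignorm{\MRd(\cdot,a_2)}{\Horm{T_p^{(1)}}{N_1}}$ through the $\la_2$-integral and invoking Lemma~\ref{l: one dim multipliers loc}~\rmii\ on $\BX_2$ slice-wise (applied to $\partial_{\zeta_1}^{j_1}m_B(\zeta_1,\cdot)$ for $\zeta_1 \in T_p^{(1)}$ and $j_1\le N_1$) yields $\bignorm{\MRd(\cdot,a_2)}{\Horm{T_p^{(1)}}{N_1}} \le C \bignorm{m_B}{\Mar{T_p}{N}} \Phi_2(a_2)(\log a_2)^{-(n_2-1)}$. Lemma~\ref{l: one dim multipliers glob}~\rmv\ then bounds $\bignorm{\kuR(\cdot,a_2)}{\Cvp{\BX_1}}$ by the same expression, which is integrable in $a_2$ against $\de_2\wrt a_2$, and Lemma~\ref{l: iterated conv}~\rmi\ concludes.

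For~\rmiii, apply Corollary~\ref{c: Transference principle}~\rmi, together with Lemma~\ref{l: iterated conv}~\rmi\ and the $\BK_2$-bi-invariance of $\kuA$, to reduce the claim to $[\cD_1^{1/p}\otimes 1]\kuA \in \lu{\OBN_1; \lu{\BA_1;\Cvp{\BX_2}}}$. Split $\cD_1^{1/p}\kuA$ as $\chi_{\BA_1^-}\cD_1^{1/p}\kuA + \chi_{\BA_1^+}\cD_1^{1/p}\kuA$; the negative-chamber piece is controlled directly by Lemma~\ref{l: multipliers loc glob}~\rmiii. On the positive-chamber piece, use the decomposition $\chi_{\BA_1^+}\cD_1^{1/p}\kuA = \taupuuA + \taupduA + \tauptuA$ introduced before Lemma~\ref{l: multipliers loc glob}. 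Arguing as in the proof of Lemma~\ref{l: one dim multipliers glob}~\rmiv, the term $\taupuuA$ lies in $\lu{\OBN_1\BA_1; \Cvp{\BX_2}}$ thanks to the first estimate of Lemma~\ref{l: multipliers loc glob}~\rmi, the bound $|\exp(E(v_1,b_1)H_0)-1| \le C b_1^{-2\al_1}$, and the integrability of $P^{2/p}$ over $\OBN_1$; the term $\taupduA$ lies there by Lemma~\ref{l: multipliers loc glob}~\rmiv, combined with the standard integrability of $P^{2/p}(\al_1(H(v_1))+1)$ and of $(1+\log^2 b_1)^{-1}$.

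The genuine obstacle is the third piece, $\tauptuA(v_1b_1,x_2) = P(v_1)^{2/p}\chi_{\BA_1^+}(b_1)\phipuA(b_1,x_2)$. Since $P^{2/p} \in \lu{\OBN_1}$, Theorem~\ref{t: Transference principle} in product form reduces the task to proving that $\chi_{\BA_1^+}\phipuA \in \Cvp{\BA_1 \times \BX_2}$. I propose to adapt the proof of Lemma~\ref{l: phip basic} to the operator-valued setting: starting from the representation of $\phipuA$ analogous to the second line of \eqref{f: phipI} (with contour on $\fra_1^* + i\de(p)\rho_1$), split the spectral side as $\Psi_1\otimes 1 + (1-\Psi_1)\otimes 1$. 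The high-frequency piece $(1-\Psi_1)\otimes 1$, after $N_1$ integrations by parts in $\la_1$, becomes a kernel in $\lu{\BA_1; \Cvp{\BX_2}}$ via \eqref{f: HCest} together with the $\Cvp{\BX_2}$-valued bounds that Lemma~\ref{l: multipliers loc glob}~\rmi\ supplies slice-wise (extended by iteration to higher $\la_2$-derivatives). The compactly supported low-frequency piece is then handled by a $\Cvp{\BX_2}$-valued Mihlin--H\"ormander theorem on $\BA_1$, whose hypotheses are encapsulated in the two bounds of Lemma~\ref{l: multipliers loc glob}~\rmi. Finally $(1-\Phi_1)\otimes 1$ is a pointwise multiplier of $\Cvp{\BA_1\times\BX_2}$ by parts~\rmiv\ and~\rmiii\ of Theorem~\ref{t: Ap spaces}, which reinstates the $1-\Phi_1$ cutoff. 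This vector-valued Calder\'on--Zygmund-style argument, requiring a careful construction of a $\Cvp{\BX_2}$-valued singular integral theory on $\BA_1$, is the most delicate step of the proof.
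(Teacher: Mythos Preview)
Your treatment of \rmi\ and \rmii\ matches the paper's, with one small slip in \rmi: since $\kop$ carries the factor $\Phi_2$, the relevant estimate on $\BX_2$ is the \emph{local} one, Lemma~\ref{l: one dim multipliers loc} (giving $\bignorm{\kop(a_1,\cdot)}{\Cvp{\BX_2}}\le C\bignorm{\Nou(a_1,\cdot)}{\Horminfty{\fra_2^*}{N_2}}$), not Lemma~\ref{l: one dim multipliers glob}~\rmv. Your detour through $A_p(\BX_2)$ is unnecessary and not actually supported by Theorem~\ref{t: Ap spaces}, which is stated only for abelian~$\Ga$.

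In \rmiii\ you agree with the paper through the $\chi_{\BA_1^-}$ piece and through $\taupuuA,\taupduA$; the divergence is at $\tauptuA$, and here there is a genuine gap. You reduce to showing $\chi_{\BA_1^+}\phipuA\in\Cvp{\BA_1\times\BX_2}$ and propose to treat the $\Psi_1$ piece by a ``$\Cvp{\BX_2}$-valued Mihlin--H\"ormander theorem on $\BA_1$'', citing the two bounds of Lemma~\ref{l: multipliers loc glob}~\rmi\ as the hypotheses. But those bounds are operator-\emph{norm} estimates on the kernel, and an operator-valued Fourier multiplier theorem on $L^p(\BA_1;L^p(\BX_2))$ requires $R$-boundedness of the symbol and its derivatives (Weis-type hypotheses), which nothing in the paper supplies. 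Even the Calder\'on--Zygmund route would give only $L^q(\BA_1;L^2(\BX_2))$ from $L^2$-boundedness plus kernel estimates, not $L^p(\BA_1;L^p(\BX_2))$. The paper avoids this entirely by inserting a \emph{second} transference step you have omitted: after Corollary~\ref{c: Transference principle}~\rmi\ reduces to $\Cvp{\BA_1\times\BG_2}$, Corollary~\ref{c: Transference principle}~\rmii\ (Coifman--Weiss transference for the Cartan decomposition of $\BA_1\times\BG_2$) reduces further to showing $[\chi_{\BA_1^+}\otimes\de_2]\phipuA\in\Cvp{\BA_1\times\BA_2}$, a \emph{scalar} convolutor estimate on the abelian group~$\BA$. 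There the paper splits via $\Psi_2/(1-\Psi_2)$ on the $\la_2$ side: the $\Psi_2$ part lands in $\lu{\BA_2;\Cvp{\BA_1}}$ by Lemma~\ref{l: one dim multipliers glob}, and the $1-\Psi_2$ part is handled by the classical Marcinkiewicz theorem on $\fra^*$ (Theorem~\ref{t: classical Marc}) combined with the pointwise-multiplier properties of $A_p(\BA)$ from Theorem~\ref{t: Ap spaces}. That second transference is precisely what converts the vector-valued problem you flag as ``the most delicate step'' into a scalar one solvable with the tools at hand.
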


\begin{proof}
First we prove \rmi.  It is convenient to write the kernel $\kop$, which is defined below formula \eqref{f: dec for B1}, 
as follows
$$
\kop(a_1,a_2)
	:= \Phi_2(a_2) \int_{\fra_2^*} \vp_{\la_2}(a_2) \, \Nou(a_1,\la_2) \wrt \nu_2(\la_2).  
$$
By Lemma~\ref{l: one dim multipliers loc} (with $\Nou(a_1,\cdot)$ in place of $m$), 
$
\norm{\kop(a_1,\cdot)}{\Cvp{\BX_2}}
\leq C \, \norm{\Nou(a_1,\cdot)}{\Horminfty{\fra_2^*}{N_2}}.
$
Thus, we are led to estimating $\bigmod{\partial_{\la_2}^{j_2}\Nou(a_1,\la_2)}$ for $j_2\leq N_2$.  
We move the contour of integration from~$\fra_1^*$ to $\fra_1^* + i\rho_1^{p,\vep}$
in the definition of $\Nou$ (see \eqref{f: N}), differentiate under the integral sign, and obtain that 
\begin{equation*} 
\partial_{\la_2}^{j_2} \Nou(a_1,\la_2)
:= a_1^{(\vep-2/p)\rho_1-2\al_1} \int_{\fra_1^*} a_1^{i\la_1}\, 
\om_1(\la_1+i\rho_1^{p,\vep},a_1)\, \partial_{\la_2}^{j_2} \cinvmB(\la_1+i\rho_1^{p,\vep},\la_2) \wrt \la_1.
\end{equation*} 
We now use the
estimate \eqref{f: pointwise est kappaominfty} obtained in the rank one case, and conclude that 
$$
\bigmod{\partial_{\la_2}^{j_2}\Nou(a_1,\la_2)}
\leq C \bignorm{\partial_{\la_2}^{j_2}m_B(\cdot, \la_2)}{\Horm{T_p^{(1)}}{N_1}} \, \frac{1-\Phi_1(a_1)}{(\log a_1)^{N_1}} 
	\, a_1^{(\vep-2/p)\rho_1 -2\al_1}, 
$$
whence
$$
\bignorm{\kop(a_1,\cdot)}{\Cvp{\BX_2}}
\leq C \bignorm{m_B}{\Mar{T_p}{N}} \, \frac{1-\Phi_1(a_1)}{(\log a_1)^{N_1}} 
	\, a_1^{(\vep-2/p)\rho_1 -2\al_1}. 
$$
Now we multiply both sides by $\vp_{i\de(p)\rho_1}^{(1)}$, integrate on $\BA_1^+$ with respect to 
the measure $\de(a_1) \wrt a_1$, use Lemma~\ref{l: iterated conv}~\rmiii, and obtain that 
$$
\begin{aligned}
\bignorm{\kop}{\Cvp{\BX}} 
& \leq C \bignorm{m_B}{\Mar{T_p}{N}}
    \int_{\BA_1^+} \big[1-\Phi_1(a_1)\big] \, a_1^{(\vep-2/p)\rho_1-2\al_1} \vp_{i\de(p)\rho_1}^{(1)}(a_1) 
    \, \de(a_1) \wrt a_1. 
\end{aligned}     
$$
Since the last integrand is $\asymp a_1^{(\vep-2/p)\rho_1-2\al_1-2\rho_1/p'+2\rho_1}$ as $a_1$ tends to infinity, 
the last integral is convergent, provided that $\vep$ is small enough.  The required conclusion follows directly from
this.

Next we prove \rmii.  The kernel $\kuR$ is defined slightly below formula \eqref{f: dec for B1}.  Observe that 
$$
\kuR(a_1,a_2)
=  [1-\Phi_1(a_1)]\, \int_{\fra_1^*} a_1^{i\la_1-\rho_1}\, \cinvMRd(\la_1,a_2) \wrt \la_1. 
$$
Note that $\bignorm{\kuR(\cdot,a_2)}{\Cvp{\BX_1}} \leq C \bignorm{\MRd(\cdot, a_2)}{\Horm{T_p^{(1)}}{N_1}}$, 
by Lemmata~\ref{l: phip basic} and~\ref{l: one dim multipliers glob}~\rmiii-\rmiv\ 
(or see the proof of \rmv\ in that lemma).
In order to majorize the right hand side, we are led to estimating 
$\Theta_1(\la_1)^{j_1}\, \bigmod{\partial_{\la_1}^{j_1} \MRd(\la_1, a_2)}$.  By differentiating under the integral sign,
we see that 
$$
\partial_{\la_1}^{j_1} \MRd(\la_1, a_2)
= \Phi_2(a_2)\int_{a_2^*} \!\! R_2(\la_2,a_2)\, \partial_{\la_1}^{j_1}m_B(\la_1,\la_2) \wrt \nu(\la_2).  
$$
Now, Lemma~\ref{l: one dim multipliers loc}~\rmii\ implies the pointwise bound 
$$
\bigmod{\partial_{\la_1}^{j_1} m(\la_1, a_2)}
\leq C \bignorm{\partial_{\la_1}^{j_1}m_B(\la_1,\cdot)}{\Horminfty{\fra_2^*}{N_2}} \, \frac{\Phi_2(a_2)}{(\log a_2)^{n_2-1}}.
$$
Thus, 
$\ds\sup_{\la_1\in T_p^{(1)}} \, \Theta_1(\la_1)^{j_1}\, \bigmod{\partial_{\la_1}^{j_1} m(\la_1, a_2)}
\leq C \bignorm{m_B}{\Mar{T_p}{N}} \, \frac{\Phi_2(a_2)}{(\log a_2)^{n_2-1}}$, and we end up with the estimate
$\ds\bignorm{\kuR(\cdot,a_2)}{\Cvp{\BX_1}} \leq C \bignorm{m_B}{\Mar{T_p}{N}} \, \frac{\Phi_2(a_2)}{(\log a_2)^{n_2-1}}$ 
We now multiply both sides by $\de_2(a_2)$ and integrate on $\BA_2$, and obtain that 
$\ds\bignorm{\kuR}{\lu{\BX_2;\Cvp{\BX_1}}} \leq C \bignorm{m_B}{\Mar{T_p}{N}}$, which, in view of 
Lemma~\ref{l: iterated conv}~\rmi, implies the required estimate.  

Finally, we prove \rmiii.   
We interpret $\kuA$ as a function on $\OBN_1\BA_1\times\OBN_2\BA_2$, viewed as the semidirect product of 
$\OBN_1\times\{e\}$ acted on by $\BA_1\times \OBN_2\BA_2$.  
By Corollary~\ref{c: Transference principle}~\rmi\ (with $\OBN_2\BA_2$ in place of $\BG_2$),  
\begin{equation} \label{f: chiP and chiM}
\bignorm{\kuA}{\Cvp{\OBN_1\BA_1\times\OBN_2\BA_2}} 
\leq \int_{\OBN_1} \bignorm{[\cD_1^{1/p}\,\kuA] (v_1\cdot, \cdot)}{\Cvp{\BA_1\times\OBN_2\BA_2}} 
\wrt v_1.  
\end{equation} 
Here $\cD_1$ denotes the function on $\OBN_1\BA_1\times\OBN_2\BA_2$
defined by $\cD_1(v_1b_1,v_2b_2) = b_1^{2\rho_1}$.   
We write  $\kuA = \kuA \, \chi_{\BA_1^-} +   \kuA\, \chi_{\BA_1^+}$.
This, \eqref{f: chiP and chiM} and the triangle inequality imply that it suffices to prove the estimates
\begin{equation} \label{f: chiM}
\int_{\OBN_1} \bignorm{[\cD_1^{1/p} \kuA \, \chi_{\BA_1^-}](v_1\cdot,\cdot)}{\Cvp{\BA_1\times\OBN_2\BA_2}}  \wrt v_1
\leq C\bignorm{m_B}{\Mar{T_p}{N}}
\end{equation} 
and 
\begin{equation} \label{f: chiP}
\int_{\OBN_1} \bignorm{[\cD_1^{1/p}\, \kuA \, \chi_{\BA_1^+}](v_1\cdot,\cdot)}{\Cvp{\BA_1\times\OBN_2\BA_2}}  \wrt v_1
\leq C\bignorm{m_B}{\Mar{T_p}{N}}.
\end{equation} 
The estimate of \eqref{f: chiM} is the easiest of the two.  Indeed,  
Lemma~\ref{l: iterated conv}~\rmi\ (with $\BA_1$ in place of $\Ga_1$ and $\OBN_2\BA_2$
in place of $\Ga_2$) implies that \eqref{f: chiM} is dominated by 
\begin{equation*} 
\int_{\OBN_1} \wrt v_1 \int_{\BA_1^-} b_1^{2\rho/p} \, \bignorm{\kuA (v_1b_1,\cdot)}{\Cvp{\OBN_2\BA_2}} \dtt {b_1}.
\end{equation*}
Then \eqref{f: chiM} follows directly from Lemma~\ref{l: multipliers loc glob}~\rmiii. 

It remains to prove \eqref{f: chiP}.  
In view of the decomposition $\cD_1^{1/p}\kuA \chi_{\BA_1^+} = \taupuuA + \taupduA + \tauptuA$, 
it suffices to show that  
\begin{equation*} 
\int_{\OBN_1} \bignorm{\tau_{1A_2}^{p,j}(v_1\cdot,\cdot)}{\Cvp{\BA_1\times\OBN_2\BA_2}}  \wrt v_1
\leq C\bignorm{m_B}{\Mar{T_p}{N}}
\end{equation*} 
for $j=1,2,3$.  
This will be a consequence of the following estimates 
\begin{equation} \label{f: chiP 1 and 2}
\int_{\OBN_1}\wrt v_1 \int_{\BA_1} \bignorm{\tau_{1A_2}^{p,j}(v_1b_1,\cdot)}{\Cvp{\OBN_2\BA_2}} \dtt{b_1}
\leq C\bignorm{m_B}{\Mar{T_p}{N}}
\end{equation} 
when $j=1,2$, and, by Corollary~\ref{c: Transference principle}~\rmi\ and \rmii, 
\begin{equation} \label{f: chiP 3}
\int_{\OBN_1} \bignorm{\big[1\otimes\de_2\big]\tauptuA(v_1\cdot,\cdot)}{\Cvp{\BA_1\times\BA_2}} \wrt v_1  
\leq C\bignorm{m_B}{\Mar{T_p}{N}}.  
\end{equation} 
Fr{}om the definition of $\taupuuA$ (just above Lemma~\ref{l: multipliers loc glob}) and 
the inequality $|E(v_1,b_1)-1|\leq 2b_1^{-2\al_1}$ for all $b_1$ in $\BA_1^+$, we see that  
$$
\begin{aligned}
	\bignorm{\taupuuA(v_1b_1,\cdot)}{\Cvp{\OBN_2\BA_2}}
& \leq 2 P(v_1)^{2/p} \,\chi_{\BA_1^+}(b_1)  \bignorm{\phipuA([v_1b_1]_+,\cdot)}{\Cvp{\BX_2}} \, b_1^{-2\al_1} \\
& \leq C \bignorm{m_B}{\Mar{T_p}{N}} \,  P(v_1)^{2/p} \,\chi_{\BA_1^+}(b_1) \,  b_1^{-2\al_1}; 
\end{aligned}
$$
we have used Lemma~\ref{l: multipliers loc glob}~\rmi\ in the last inequality.  Inserting this estimate in 
the left hand side of \eqref{f: chiP 1 and 2}, 
and recalling that $P(v_1)^{2/p}$ is in $\lu{\OBN_1}$, because $2/p>1$, proves \eqref{f: chiP 1 and 2} when $j=1$. 

Similarly, fr{}om the definition of $\taupduA$ (just above Lemma~\ref{l: multipliers loc glob}), 
we see that  
$$
\begin{aligned}
\bignorm{\taupduA(v_1b_1,\cdot)}{\Cvp{\OBN_2\BA_2}}
& \leq 2 P(v_1)^{2/p} \,\chi_{\BA_1^+}(b_1)  \bignorm{\phipuA([v_1b_1]_+,\cdot)-\phipuA(b_1,\cdot)}{\Cvp{\BX_2}} \\
& \leq C \bignorm{m_B}{\Mar{T_p}{N}} \,  \big[\bigmod{\al_1(H(v_1))}+1\big] P(v_1)^{2/p} \,
	\frac{\chi_{\BA_1^+}(b_1)}{1+(\log b_1)^2}; 
\end{aligned}
$$
we have used Lemma~\ref{l: multipliers loc glob}~\rmiv\ in the last inequality.  By inserting this estimate in 
the left hand side of \eqref{f: chiP 1 and 2}, 
and recalling that $\big[\mod{\al_1(H(v_1))}+1\big]\, P(v_1)^{2/p}$ is in $\lu{\OBN_1}$, 
because $2/p>1$, we obtain \eqref{f: chiP 1 and 2} when $j=2$. 

It remains to prove \eqref{f: chiP 3}.  Notice that the left hand side of \eqref{f: chiP 3} is equal to 
$$
\bignorm{\big[\chi_{\BA_1^+} \otimes\de_2\big]\phipuA}{\Cvp{\BA_1\times\BA_2}} \int_{\OBN_1} P(v_1)^{2/p} \wrt v_1
\leq C \bignorm{\big[\chi_{\BA_1^+} \otimes\de_2\big]\phipuA}{\Cvp{\BA_1\times\BA_2}}. 
$$  
We focus on estimating the right hand side of the last inequality above.  It is convenient to define
$$
\Upsilon (b_1,a_2)
:= b_1^{\de(p)\rho_1} \,\de_2(a_2) \int_{\fra^*} 
      b_1^{i\la_1} \, A_2(\la_2,a_2)  \, \cinvmB (\la_1,\la_2) \wrt \la_1 \wrt \nu_2(\la_2).  
$$
We write $\Psi_2(\la_2) + [1-\Psi_2(\la_2)]$ in place of $1$ in the integral above: then 
$\Upsilon$ may be correspondingly written as the sum of two terms, which we
denote by $\Upsilon^{\Psi_2}$ and $\Upsilon^{1-\Psi_2}$.  Thus,  
$$
\big[\chi_{\BA_1^+} \otimes\de_2\big]\phipuA 
= [\chi_{\BA_1^+}(1-\Phi_1) \otimes \Phi_2]\, \big(\Upsilon^{\Psi_2} + \Upsilon^{1-\Psi_2}\big)
$$
We claim that 
$\bignorm{[\chi_{\BA_1^+}(1-\Phi_1) \otimes \Phi_2]\Upsilon^{\Psi_2}}{\lu{\BA_2;\Cvp{\BA_1}}}
\leq C \bignorm{m}{\Mar{\TWp}{N}}$. 
Indeed, notice that 
$$
\Upsilon^{\Psi_2} (b_1,a_2)
= \de_2(a_2) \, b_1^{\de(p)\rho_1} 
   \int_{\fra_1^*} b_1^{i\la_1} \, [\check\bc_1^{-1} M_{\Psi_2A_2}] (\la_1,a_2) \wrt \la_1,
$$
where $M_{\Psi_2A_2} (\la_1,a_2) := \int_{\fra_2^*} A_2(\la_2,a_2) \, \Psi_2(\la_2) \, m_B(\la_1,\la_2) \wrt\nu_2(\la_2)$.  
By Lemma~\ref{l: one dim multipliers glob} 
$$
\bignorm{\chi_{\BA_1^+}(1-\Phi_1)\Upsilon^{\Psi_2}(\cdot, a_2)}{\Cvp{\BA_1}} 
\leq C \bignorm{M_{\Psi_2A_2}(\cdot, a_2)}{\Horm{T_p^{(1)}}{N_1}} \, \de_2(a_2).
$$
The claim follows by multiplying both sides by $\Phi_2$,
integrating both sides on $\BA_2$ and observing that 
$\bignorm{M_{\Psi_2A_2}(\cdot, a_2)}{\Horm{T_p^{(1)}}{N}} \leq \bignorm{m_B}{\Mar{T_p}{N}}$.  

Now we estimate $\Upsilon^{1-\Psi_2}$.  Notice that 
$$
\Upsilon^{1-\Psi_2} (b_1,a_2)
= \de_2(a_2) \,  b_1^{\de(p)\rho_1} 
   \int_{\fra_1^*} b_1^{i\la_1} \, [\check\bc_1^{-1} M_{[1-\Psi_2]A_2}] (\la_1,a_2) \wrt \la_1,
$$
where $M_{[1-\Psi_2]A_2} (\la_1,a_2) := \int_{\fra_2^*} A_2(\la_2,a_2) \, [1-\Psi_2(\la_2)] \, m_B(\la_1,\la_2) 
\wrt\nu_2(\la_2)$.  We observe preliminary that 
\begin{equation*} 
\bignorm{[\chi_{\BA_1^-}(1-\Phi_1) \otimes \Phi_2] \Upsilon^{1-\Psi_2}}{\lu{\BA_1;\Cvp{\BA_2}}}
\leq C \bignorm{m}{\Mar{T_p}{N}}.
\end{equation*}
Indeed, by integrating by parts $N_1$ times, we see that 
$$
\Upsilon^{1-\Psi_2} (b_1,a_2)
= \de_2(a_2) \, \frac{ b_1^{\de(p)\rho_1}}{(i\log b_1)^{N_1}}
   \int_{\fra_2^*}\wrt\nu_2(\la_2)  A_2(\la_2,a_2) \, [1-\Psi_2(\la_2)] 
   \int_{\fra_1^*} b_1^{i\la_1} \, \partial_{\la_1}^{N_1}[\check\bc_1^{-1} \, m_B](\la_1,\la_2) \wrt \la_1,
$$
The proof of \cite[Lemma~5.3]{ST} shows that 
$$
\bignorm{\Phi_2 \Upsilon^{1-\Psi_2}(b_1,\cdot)}{\Cvp{\BA_2}}
\leq C \, \frac{ b_1^{\de(p)\rho_1}}{\mod{\log b_1}^{N_1}} 
\Bignorm{[1-\Psi_2] 
\int_{\fra_1^*} b_1^{i\la_1} \, \partial_{\la_1}^{N_1}[\check\bc_1^{-1} \, m_B](\la_1,\cdot) \wrt \la_1}{\Horm{T_p^{(2)}}{N_2}}.
$$
A straightforward calculation shows that the right hand side is dominated by $C \bignorm{m}{\Mar{T_p}{N}}$, whence
$$
\begin{aligned}
\bignorm{[\chi_{\BA_1^-}(1-\Phi_1) \otimes \Phi_2] \Upsilon^{1-\Psi_2}}{\lu{\BA_1;\Cvp{\BA_2}}}
& \leq C \bignorm{m}{\Mar{T_p}{N}}  \int_{\BA_1^-}  [1-\Phi_1(b_1)]\, 
	\frac{ b_1^{\de(p)\rho_1}}{\mod{\log b_1}^{N_1}} \dtt{b_1}  \\
& \leq C \bignorm{m}{\Mar{T_p}{N}}, 
\end{aligned}
$$
as required.  

Now we move the path of integration from $\fra_1^*$ to $\la_1+i\de(p)\rho_1$, and obtain that 
$$
\Upsilon^{1-\Psi_2} (b_1,a_2)
= \de_2(a_2) \, 
   \int_{\fra_1^*} b_1^{i\la_1} \, [\check\bc_1^{-1} M_{[1-\Psi_2]A_2}] (\la_1+i\de(p) \rho_1,a_2) \wrt \la_1,
$$
We shall prove that $\Phi_2\Upsilon^{1-\Psi_2}$ is in $\Cvp{\BA_1\times\BA_2}$.
Define $\wt m(\la_1,\la_2) := [1-\Psi_2(\la_2)] \,[(\check\bc_1^{-1}\otimes \mod{\bc_2}^{-2})m_B](\la_1+i\de(p)\rho_1,\la_2)$. 
Recall the operators $\cO_2$ and $\cO_2^*$ defined just above Lemma~\ref{l: est for M},  
and denote by $q$ and $\Pi$ the function on $\fra^*$ and the measure on $\fra_2^*$, defined by 
$$
\hbox{$q(\la) 
:= 
\begin{cases}
(\cO_2^*)^{(n_2-1)/2} \wt m(\la)                & \hbox{if $n_2$ is odd} \\ 
\partial_{\la_2}(\cO_2^*)^{n_2/2-1}\wt m(\la) & \hbox{if $n_2$ is even} 
\end{cases}$}
\qquad
\hbox{$\Pi 
:= 
\begin{cases}
\de_{{1_2}}                         & \hbox{if $n_2$ is odd} \\ 
(1-\mu_2^2)^{-1/2}\mu_2\wrt \mu_2 & \hbox{if $n_2$ is even}, 
\end{cases}$}
$$
respectively; the symbol $\de_{1_2}$ above stands for the Dirac delta at $1$ in $\fra_2^*$.  A careful inspection of the proof of Proposition~\ref{p: kB011}~\rmiii, or of the proof of \cite[Lemma~5.3]{ST}, shows that 
$$
M_{[1-\Psi_2]A_2} (\la_1,a_2) := c_0^{(2)} \, \frac{\Phi_2(a_2)}{\log^{n_2-1}a_2} \, w_2(a_2)
\int_{\fra_2^*} \cos(\la_2t_2) \, \cP(\la_1,\la_2) \wrt\la_2,
$$  
where 
$\ds
\cP(\la_1,\la_2)
:= \int_0^1 q(\la_1,\mu_2 \la_2)\wrt \Pi(\mu_2).
$
Note that for each $\la_1\in \fra_1^*$ the function $\cP(\la_1,\cdot)$ is Weyl-invariant on $\fra_2^*$. 
Thus,  
$$
\begin{aligned}
\big[\Phi_2 \Upsilon^{1-\Psi_2}\big] (b_1,a_2)
& = c_0^{(2)} \, \frac{\Phi_2(a_2)}{w_2(a_2)} \, 
  \int_{\fra^*} b_1^{i\la_1} \, \cos(\la_2t_2) \, \cP(\la_1,\la_2)  \wrt \la_1\wrt \la_2 \\
& = c_0^{(2)} \, \frac{\Phi_2(a_2)}{w_2(a_2)} \, 
	\int_{\fra^*} b_1^{i\la_1} \, a_2^{i\la_2} \, \cP(\la_1,\la_2)  \wrt \la_1\wrt \la_2.  
\end{aligned}
$$
%
%
A straightforward computation and Lemma~\ref{l: est for M}~\rmi\ imply that $q$ satisfies the estimate
$$
\sup_{\la\in\fra^*}\bigmod{\la_1}^{\mod{j_1}} \, \bigmod{\la_2}^{\mod{j_2}}
	\bigmod{\partial_{\la_1}^{j_1}\partial_{\la_2}^{j_2} q(\la_1,\la_2)}
\leq C\,\bignorm{m}{\Mar{\TWp}{N}}
\quad\hbox{for $j_1\leq N_1$ and $j_2\leq N_2$}.  
$$
Consequently, a similar estimate is true for $\cP$.  An application of the classical Marcinkiewicz theorem
(see Theorem~\ref{t: classical Marc}) gives that the function 
$$
\cM^{-1}\cP(b_1,a_2) 
:= \int_{\fra^*} b_1^{i\la_1} \, a_2^{i\la_2} \, \cP(\la_1,\la_2)  \wrt \la_1\wrt \la_2
$$ 
satisfies the estimate $\bignorm{\cM^{-1}\cP}{\Cvp{\BA}} \leq C\,\bignorm{m}{\Mar{\TWp}{N}}$.
Notice that $\Phi_2/w_2$ is a smooth function with compact support on $\BA_2$.  Since
$\Phi_1\otimes (\Phi_2/w_2)$ is in $C_c^\infty(\BA)$,
it is also in $A_p(\BA)$, by Theorem~\ref{t: Ap spaces}.  Therefore $[\Phi_1\otimes (\Phi_2/w_2)]\, \cM^{-1}\cP$ 
is in $\Cvp{\BA}$, i.e.,   
$[\Phi_1\otimes\Phi_2]\, \Upsilon^{1-\Psi_2}$ is in $\Cvp{\BA_1\times\BA_2}$.  Also,
by results of Cowling and Haagerup, $[1\otimes (\Phi_2/w_2)]$ is in $A_p(\BA)$, so that 
$[1\otimes (\Phi_2/w_2)]\, \cM^{-1}\cP$ is in $\Cvp{\BA}$, i.e.,   
$[1\otimes\Phi_2]\, \Upsilon^{1-\Psi_2}$ is in $\Cvp{\BA}$.  
Recall that we have just proved that  
$[\chi_{\BA_1^-}(1-\Phi_1) \otimes \Phi_2] \, \Upsilon^{1-\Psi_2}$ is in ${\lu{\BA_1;\Cvp{\BA_2}}}$,
hence in $\Cvp{\BA}$.  By difference, we get that   
$[\chi_{\BA_1^+}(1-\Phi_1) \otimes \Phi_2] \, \Upsilon^{1-\Psi_2}$ is in $\Cvp{\BA_1\times\BA_2}$,
as required. 

This concludes the proof of \rmiii, and of the proposition. 
\end{proof}

\section{Analysis of the operator $B_2$}
\label{s: glob-glob}

In this section we estimate $k_{B_2}= [(1-\Phi_1)\otimes (1-\Phi_2)] \, k_{B}$.  As explained
in Section~\ref{s: Statement}, it suffices to estimate $\kuu$, $\kuo$, $\kou$ and $\koo$, which are defined just below
formula \eqref{f: dec for B2}.  We need more notation, which is reminiscent of that introduced in Section~\ref{s: Background}
and at the beginning of Section~\ref{s: loc-glob}.  Set
\begin{equation} \label{f: phip product II}
\phipuu(a_1,a_2)
:= \big[1-\Phi_1(a_1)\big]\,\big[1-\Phi_2(a_2)\big]\, a^{\de(p)\rho}\int_{\fra^*} a^{i\la} \,
	\mbcuu(\la)\wrt\la \\
\end{equation}
for every $(a_1, a_2) \in \BA$, where $\mbcuu = \big[\check\bc_1^{-1}\otimes\check\bc_2^{-1}\big] \, m_B$ 
(this notation has been already used slightly below formula \eqref{f: dec for B2}).  Recall that
$m_B$ is in $\Mar{T_p}{N}$ by assumption.  
Observe that trivially
\begin{equation*} 
\kuu (a)
= a^{-2\rho/p} \, \phipuu(a)
\quant a \in \BA^+.  
\end{equation*}
Also, set
\begin{equation*} 
\Xi(a)
:= \int_{\fra^*} a^{i\la} \, [\mbcuu_{\de(p)\rho}] (\la) \wrt\la, 
\end{equation*} 
where $[\mbcuu_{\de(p)\rho}] (\la) := \mbcuu(\la+i\de(p)\rho)$.  Notice that the integral above is convergent,
because we agreed to premultiply $m_B$ by $\wt h_\vep$
(see the beginning of \emph{Outline of the proof of Theorem~1.1} in Section~\ref{s: Statement}, p.\ \pageref{p:outline}).  
Recall the smooth partition of unity on $\fra^*$ 
$$
1
= \Psi_1\otimes\Psi_2 + (1-\Psi_1)\otimes \Psi_2 + \Psi_1\otimes (1-\Psi_2) + (1-\Psi_1)\otimes (1-\Psi_2),
$$
defined in \eqref{f: smooth finite partition}.  Correspondingly, we write 
$
\Xi = \Xi^{0,0} + \Xi^{\infty,0} + \Xi^{0,\infty} + \Xi^{\infty,\infty},
$
where 
$$
\ds \Xi^{\infty,0} (a) 
= \int_{\fra^*} [1-\Psi_1(\la_1)]\, \Psi_2(\la_2) \, a^{i\la} \, \mbcuu_{\de(p)\rho} (\la) \wrt\la,
$$ 
and similar formulae hold for 
$\Xi^{0,0}$, $\Xi^{0,\infty}$ and $\Xi^{\infty,\infty}$.

\begin{lemma} \label{l: prop of Xi}
The following hold:
\begin{enumerate}
\item[\itemno1]
$\Xi^{0,0}$ is in $Cv_q(\BA)$ for all $q$ in $(1,\infty)$;  
\item[\itemno2]
$[1-\Phi_1][1-\Phi_2]\, \Xi^{\infty,\infty}$ is in $\lu{\BA}$;  
\item[\itemno3]
$[1-\Phi_2]\, \Xi^{0,\infty}$ is in $\lu{\BA_2;Cv_q(\BA_1)}$ for all $q$ in $(1,\infty)$;  
\item[\itemno4]
$[1-\Phi_1]\, \Xi^{\infty,0}$ is in $\lu{\BA_1;Cv_q(\BA_2)}$ for all $q$ in $(1,\infty)$.  
\end{enumerate}
In each of the four statements above, the norms of the relevant functions are controlled by $C\, \Mar{T_p}{N}$.
\end{lemma}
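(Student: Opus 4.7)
The plan is to analyse each of the four cutoffs separately, in each case exploiting the fact that $\Psi_k$ localises $\la_k$ near $0$ while $1-\Psi_k$ localises $\la_k$ away from it, and that $1-\Phi_k$ restricts $a_k$ to $\mod{\log a_k}\geq 1$. Throughout I would use that, by~\eqref{f: HCest}, $\bigmod{\partial^{j_k}\check\bc_k^{-1}(\la_k+i\de(p)\rho_k)}\leq C(1+\mod{\la_k})^{(n_k-1)/2-j_k}$, and that the Marcinkiewicz estimates on $m_B$ transfer (via Remark~\ref{rem: extension}) to estimates on the boundary $\fra^*+i\de(p)\rho$ of the form $\bigmod{\partial_{\la_1}^{j_1}\partial_{\la_2}^{j_2}m_B(\la+i\de(p)\rho)}\leq C\bignorm{m_B}{\Mar{T_p}{N}}\mod{\la_1}^{-j_1}\mod{\la_2}^{-j_2}$.

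For \itemno1 I would argue that the Mellin transform of $\Xi^{0,0}$ is $\Psi_1\otimes\Psi_2\cdot\mbcuu_{\de(p)\rho}$, which is smooth and has compact support on $\fra^*$. Combining the Leibniz rule with the two pointwise bounds above shows that this function satisfies a classical Marcinkiewicz condition of order $(N_1,N_2)$ on $\fra^*$ (the potentially dangerous $\mod{\la_k}^{-j_k}$ factors from $m_B$ are absorbed by the $\mod{\la_k}^{j_k}$ weights in the Marcinkiewicz definition \eqref{f: Marc I}). Theorem~\ref{t: classical Marc}, applied via the Mellin-transform identification $\BA\simeq\BR^2$, then gives \itemno1.

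For \itemno2 I would integrate by parts $N_1$ times in $\la_1$ and $N_2$ times in $\la_2$, using $a_k^{i\la_k}=(i\log a_k)^{-1}\partial_{\la_k}a_k^{i\la_k}$ (valid since $\mod{\log a_k}\geq 1$ on the support of $1-\Phi_k$). Leibniz combined with the pointwise estimates above dominates the integrand, after integration by parts, by $C\bignorm{m_B}{\Mar{T_p}{N}}(1+\mod{\la_1})^{(n_1-1)/2-N_1}(1+\mod{\la_2})^{(n_2-1)/2-N_2}$. The hypothesis $N_k>(n_k+3)/2$ makes the resulting integral in $\la$ absolutely convergent and yields
$$
\bigmod{[1-\Phi_1(a_1)][1-\Phi_2(a_2)]\Xi^{\infty,\infty}(a)}
\leq C\bignorm{m_B}{\Mar{T_p}{N}}\,\frac{[1-\Phi_1(a_1)][1-\Phi_2(a_2)]}{\mod{\log a_1}^{N_1}\mod{\log a_2}^{N_2}}.
$$
Since $N_k>1$, the right-hand side is integrable against the Haar measure $\wrt a_1\wrt a_2/(a_1a_2)$ on $\BA$, giving \itemno2.

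For \itemno3 (and similarly \itemno4, by symmetry), for each fixed $a_2$ the function $\Xi^{0,\infty}(\cdot,a_2)$ has Mellin transform
$\la_1\mapsto\Psi_1(\la_1)\int_{\fra_2^*}[1-\Psi_2(\la_2)]\,a_2^{i\la_2}\,\mbcuu_{\de(p)\rho}(\la_1,\la_2)\wrt\la_2$, which is compactly supported in $\la_1$. Differentiating $j_1\leq N_1$ times in $\la_1$ under the integral and then integrating by parts $N_2$ times in $\la_2$ (using $\mod{\log a_2}\geq 1$ on the support of $1-\Phi_2$) gives the uniform bound
$$
\sup_{\la_1\in\fra_1^*}(1+\mod{\la_1})^{j_1}\Bigmod{\partial_{\la_1}^{j_1}\Bigl[\Psi_1(\la_1)\int_{\fra_2^*}[1-\Psi_2(\la_2)]a_2^{i\la_2}\mbcuu_{\de(p)\rho}(\la)\wrt\la_2\Bigr]}
\leq C\bignorm{m_B}{\Mar{T_p}{N}}\,\mod{\log a_2}^{-N_2};
$$
the $\la_1$-support in $\mod{\la_1}\leq 2$ absorbs the $(1+\mod{\la_1})^{(n_1-1)/2}$ factor coming from $\check\bc_1^{-1}$. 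The classical Mihlin--H\"ormander theorem on $\BA_1$ then yields $\bignorm{[1-\Phi_2(a_2)]\Xi^{0,\infty}(\cdot,a_2)}{Cv_q(\BA_1)}\leq C\bignorm{m_B}{\Mar{T_p}{N}}[1-\Phi_2(a_2)]\mod{\log a_2}^{-N_2}$, which, integrated against $\wrt a_2/a_2$, is finite because $N_2>1$. The main obstacle is keeping track of the operator-valued nature of the norm in \itemno3--\itemno4: one must arrange the integrations by parts so that they act only on the ``inner'' variable $\la_2$, leaving a clean $\la_1$-Mihlin multiplier uniformly in $a_2$, with an integrable $a_2$-weight.
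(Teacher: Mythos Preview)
Your approach is essentially identical to the paper's: Marcinkiewicz theorem for \rmi, double integration by parts for \rmii, and integration by parts in $\la_2$ plus a one-dimensional Mihlin--H\"ormander argument in $\la_1$ for \rmiii\ (and symmetrically for \rmiv).

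There is one slip in your treatment of \rmiii. The displayed bound
\[
\sup_{\la_1\in\fra_1^*}(1+\mod{\la_1})^{j_1}\Bigmod{\partial_{\la_1}^{j_1}\bigl[\Psi_1(\la_1)\!\int_{\fra_2^*}\!\cdots\bigr]}
\leq C\bignorm{m_B}{\Mar{T_p}{N}}\,\mod{\log a_2}^{-N_2}
\]
is not true as written: the boundary value $m_B(\,\cdot+i\de(p)\rho)$ only satisfies $\Mar{\fra^*}{N}$, so $\partial_{\la_1}^{j_1}m_B$ may blow up like $\mod{\la_1}^{-j_1}$ near $\la_1=0$, and the weight $(1+\mod{\la_1})^{j_1}$ does not compensate this. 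The correct statement (and what the paper checks) is the genuine Mihlin condition with weight $\mod{\la_1}^{j_1}$, i.e.\ membership in $\Horm{\fra_1^*}{N_1}$ rather than $\Horminfty{\fra_1^*}{N_1}$. With that replacement your argument goes through unchanged, since the classical Mihlin--H\"ormander theorem on $\BA_1$ uses exactly the $\mod{\la_1}^{j_1}$ weights.
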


\begin{proof}
First we prove \rmi.  We simply note that $\Xi^{0,0}$ is the inverse Mellin transform of
a multiplier in $\Mar{\fra^*}{N}$.  Indeed, 
$$
\bigmod{\partial_{\la_1}^{j_1}\partial_{\la_2}^{j_2} \{[\Psi_1\otimes\Psi_2]\, [\mbcuu]_{\de(p)\rho}\}(\la)}   
\leq C \bignorm{m_B}{\Mar{\TWp}{N}} \frac{\Psi_1(\la_1)}{\mod{\la_1}^{j_1} }\, \frac{\Psi_2(\la_2)}{\mod{\la_2}^{j_2}}
\quant \la\in \fra^* 
$$
for all multi-indices $(j_1,j_2) \leq (N_1,N_2)$.  This is more than needed to conclude that 
$\Xi^{0,0}$ is in $Cv_q(\BA)$ for all $q$ in $(1,\infty)$.

Next we prove \rmii.  By integrating by parts $N_1$ times with respect to $\la_1$ and $N_2$
times with respect to $\la_2$, we obtain that 
$$
\begin{aligned}
\bigmod{\Xi^{\infty,\infty}(a)}
& \leq \mod{\log a_1}^{-N_1} \,\mod{\log a_2}^{-N_2} \int_{\fra^*} \bigmod{\partial_{\la_1}^{N_1}
	\partial_{\la_2}^{N_2}\{[(1-\Psi_1)\otimes(1-\Psi_2)] [\mbcuu]_{\de(p)\rho}(\la) \}}  \wrt\la \\
& \leq C \, \frac{\bignorm{m_B}{\Mar{T_p}{N}}}{\mod{\log a_1}^{N_1} \,\mod{\log a_2}^{N_2}} \int_{\fra^*} 
         (1+\mod{\la_1})^{-N_1+(n_1-1)/2}\, (1+\mod{\la_2})^{-N_2+(n_2-1)/2} \wrt\la. 
\end{aligned}
$$
Hence
$$
[1-\Phi_1(a_1)]\, [1-\Phi_2(a_2)] \, \bigmod{\Xi^{\infty,\infty}(a)} 
\leq C \bignorm{m_B}{\Mar{T_p}{N}}\, \frac{1-\Phi_1(a_1)}{\mod{\log a_1}^{N_1}} \,\frac{1-\Phi_2(a_2)}{\mod{\log a_2}^{N_2}}.  
$$
It is straightforward to check that the right hand side is in $\lu{\BA}$, as required.

Now we prove \rmiii.   By integrating by parts $N_2$ times with respect to $\la_2$, we see that 
$$
\Xi^{0,\infty}(a)
= (\log a_2)^{-N_2} \int_{\fra^*} \Psi_1(\la_1)\, a^{i\la} \, 
	\partial_{\la_2}^{N_2} \{(1-\Psi_2) [\mbcuu]_{\de(p)\rho}\} (\la) \wrt\la.   
$$
A straightforward calculation shows that the function 
$$
\la_1\mapsto \Psi_1(\la_1)\,\int_{\fra_2} a^{i\la_2}\, 
\partial_{\la_2}^{N_2} \{(1-\Psi_2) [\mbcuu]_{\de(p)\rho}\} (\la)\wrt \la_2
$$ 
satisfies a H\"ormander condition of order $N_1$ on $\fra_1^*$, and 
$$
\begin{aligned}
& \Bignorm{\Psi_1\,\int_{\fra_2} a^{i\la_2}\, \partial_{\la_2}^{N_2} \{(1-\Psi_2) 
	[\mbcuu]_{\de(p)\rho}\} (\cdot,\la_2)\wrt \la_2}{\Horm{\fra_1^*}{N_1}}  \\
& \leq C \bignorm{m_B}{\Mar{T_p}{N}} \int_{\fra_2} \frac{1-\Psi_2(\la_2)}{\mod{\la_2}^{N_2}}
	\, (1 +\mod{\la_2})^{(n_2-1)/2}  \wrt \la_2\\
& \leq C \bignorm{m_B}{\Mar{T_p}{N}}.  
\end{aligned}
$$
We have used the assumption that $N_2> (n_2+3)/2$ in the last inequality.
The classical H\"ormander theorem implies that $\Xi^{0,\infty}(\cdot, a_2)$ is in $Cv_q(\BA_1)$ and 
$$
\bignorm{\Xi^{0,\infty}(\cdot, a_2)}{Cv_q(\BA_1)}
\leq \frac{\bignorm{m_B}{\Mar{T_p}{N}}}{\mod{\log a_2}^{N_2}}.
$$
Therefore $\bignorm{[1-\Phi_2]\, \Xi^{0,\infty}}{\lu{\BA_2;Cv_q(\BA_1)}} \leq \bignorm{m_B}{\Mar{T_p}{N}}$, as required. 

Finally, the proof of \rmiv\ is similar to the proof of \rmiii\ with the roles of $\BA_1$ and $\BA_2$ interchanged.
We omit the details.  
\end{proof}

\begin{lemma} \label{l: prop of Xi II}
The functions 
$[1-\Phi_1][1-\Phi_2]\, \Xi^{0,0}$, $\Phi_1\, [1-\Phi_2]\, \Xi^{0,\infty}$, 
$[1-\Phi_1]\,[1-\Phi_2]\, \Xi^{0,\infty}$, $[1-\Phi_1]\,\Phi_2\, \Xi^{\infty,0}$ 
and $[1-\Phi_1]\,[1-\Phi_2]\, \Xi^{\infty,0}$ are in $Cv_q(\BA)$ for all $q$ in $(1,\infty)$.
Furthermore, $\phipuu$ is in $Cv_q(\BA)$ for all $q$ in $(1,\infty)$.

The convolution norm of each of the functions above is controlled by $C \bignorm{m_B}{\Mar{T_p}{N}}$.
\end{lemma}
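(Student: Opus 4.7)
The plan is to combine Lemma~\ref{l: prop of Xi} with the multiplier/algebra properties of the Fig\`a-Talamanca--Herz space $A_q(\BA)$ (Theorem~\ref{t: Ap spaces}) and with the abelian version of Lemma~\ref{l: iterated conv}~\rmi. The preliminary observation is that, by Theorem~\ref{t: Ap spaces}~\rmiv, the functions $\Phi_1\otimes\One$, $(1-\Phi_1)\otimes\One$, $\One\otimes\Phi_2$, $\One\otimes(1-\Phi_2)$ all belong to $A_q(\BA)$; since $A_q(\BA)$ is an algebra under pointwise multiplication (Theorem~\ref{t: Ap spaces}~\rmii), any product of these is in $A_q(\BA)$, and hence acts as a pointwise multiplier on $Cv_q(\BA)$ by Theorem~\ref{t: Ap spaces}~\rmiii, with norm controlled uniformly. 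A second preliminary observation is that, since both $\BA_1$ and $\BA_2$ are abelian (so $\Delta_{\BA_j}=1$), Lemma~\ref{l: iterated conv}~\rmi\ yields the continuous inclusions $\lu{\BA_1;Cv_q(\BA_2)}\subset Cv_q(\BA)$ and $\lu{\BA_2;Cv_q(\BA_1)}\subset Cv_q(\BA)$.

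With these tools in hand, the five mixed statements follow directly. For $[1-\Phi_1][1-\Phi_2]\,\Xi^{0,0}$, use Lemma~\ref{l: prop of Xi}~\rmi\ to put $\Xi^{0,0}$ in $Cv_q(\BA)$ and then multiply by the $A_q$ function $[1-\Phi_1]\otimes[1-\Phi_2]$. For $\Phi_1[1-\Phi_2]\,\Xi^{0,\infty}$ and $[1-\Phi_1][1-\Phi_2]\,\Xi^{0,\infty}$, Lemma~\ref{l: prop of Xi}~\rmiii\ gives $[1-\Phi_2]\,\Xi^{0,\infty}\in \lu{\BA_2;Cv_q(\BA_1)}\subset Cv_q(\BA)$, and multiplication by $\Phi_1\otimes\One$ or $[1-\Phi_1]\otimes\One$ preserves this. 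The two kernels involving $\Xi^{\infty,0}$ are handled symmetrically, using Lemma~\ref{l: prop of Xi}~\rmiv\ in place of \rmiii. In every case the norm estimate $\leq C\bignorm{m_B}{\Mar{T_p}{N}}$ comes from the corresponding bound in Lemma~\ref{l: prop of Xi} together with the uniform $A_q$-multiplier constants (which depend only on $\Phi_1,\Phi_2$).

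For the final assertion about $\phipuu$, I would first rewrite \eqref{f: phip product II} by shifting the contour of integration from $\fra^*$ to $\fra^*+i\de(p)\rho$. The shift is legitimate because $\mbcuu$ is holomorphic and Weyl-invariant on $\TWp$, while the extra regularizing factor $\wt h_\vep$ (introduced on page~\pageref{p:outline}) ensures absolute convergence and decay at infinity; this gives the identity
\begin{equation*}
\phipuu(a)
= [1-\Phi_1(a_1)]\,[1-\Phi_2(a_2)]\,\Xi(a)
\quant a\in \BA.
\end{equation*}
Using the partition of unity $1=\Psi_1\otimes\Psi_2+(1-\Psi_1)\otimes\Psi_2+\Psi_1\otimes(1-\Psi_2)+(1-\Psi_1)\otimes(1-\Psi_2)$, I decompose $\Xi=\Xi^{0,0}+\Xi^{\infty,0}+\Xi^{0,\infty}+\Xi^{\infty,\infty}$ and write $\phipuu$ as a sum of four pieces. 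The first three pieces were already shown to belong to $Cv_q(\BA)$ above; the fourth piece $[1-\Phi_1][1-\Phi_2]\,\Xi^{\infty,\infty}$ is in $\lu{\BA}$ by Lemma~\ref{l: prop of Xi}~\rmii\ and so \emph{a fortiori} in $Cv_q(\BA)$ by the trivial embedding $\lu{\BA}\subset Cv_q(\BA)$.

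The argument involves no serious obstacle, as the hard analytic work was already carried out in Lemma~\ref{l: prop of Xi}; the only two points requiring a little care are the justification of the contour shift for $\phipuu$ (which rests on the $\wt h_\vep$ regularization and the analyticity of $\mbcuu$ on $\TWp$) and the bookkeeping that turns each Lemma~\ref{l: prop of Xi} bound, combined with the uniform $A_q$-multiplier constants for the cutoffs, into the required uniform estimate $C\bignorm{m_B}{\Mar{T_p}{N}}$.
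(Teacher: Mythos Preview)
Your proposal is correct and follows essentially the same approach as the paper: both arguments combine Lemma~\ref{l: prop of Xi} with the pointwise multiplier property of $A_q(\BA)$ from Theorem~\ref{t: Ap spaces}, and both obtain $\phipuu = [(1-\Phi_1)\otimes(1-\Phi_2)]\,\Xi$ and then sum the four pieces. You are somewhat more explicit than the paper in two places---you spell out the contour shift justifying $\phipuu=[1-\Phi_1][1-\Phi_2]\,\Xi$, and you invoke Lemma~\ref{l: iterated conv}~\rmi\ to embed $\lu{\BA_j;Cv_q(\BA_{3-j})}$ into $Cv_q(\BA)$---whereas the paper simply states the identity for $\phipuu$ and leaves the ``similar considerations'' for the $\Xi^{0,\infty}$ and $\Xi^{\infty,0}$ terms to the reader.
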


\begin{proof}
By Theorem~\ref{t: Ap spaces}, functions in the Fig\`a-Talamanca--Herz space $A_p(\BA)$ are pointwise multipliers 
of~$\Cvp{\BA}$,  
i.e., $\phi$ in $A_p(\BA)$ and $k$ in $\Cvp{\BA}$ imply that 
\begin{equation} \label{f: Ap and Cvp}
\bignorm{\phi k}{\Cvp{\BA}} 
\leq \bignorm{\phi}{A_p(\BA)} \, \bignorm{k}{\Cvp{\BA}}.
\end{equation}
It is well known that for each $q$ in $(1,\infty)$ the space $C_c^\infty(\BA)$ 
and the functions $1\otimes \Phi_2$ and $\Phi_1\otimes 1$ are in $A_q(\BA)$ \cite{Co}. 
Thus, by Lemma~\ref{l: prop of Xi}~\rmii, $[\Phi_1\otimes\Phi_2] \, \Xi^{0,0}$, $[\Phi_1\otimes 1] \, \Xi^{0,0}$ and 
$[1\otimes\Phi_2] \, \Xi^{0,0}$ are in $Cv_q(\BA)$ for all $q$ in $(1,\infty)$.  Hence the same is true for
$[(1-\Phi_1)\otimes\Phi_2] \, \Xi^{0,0}$, $[\Phi_1\otimes (1-\Phi_2)] \, \Xi^{0,0}$  and 
$[(1-\Phi_1)\otimes(1-\Phi_2)] \, \Xi^{0,0}$.  
The desired control of the norm follows from \eqref{f: Ap and Cvp}. 

Similar considerations apply to all the other functions in the statement.  We omit the details.  

Finally, observe that $\phipuu = [(1-\Phi_1)\otimes(1-\Phi_2)]\, 
[\Xi^{0,0} + \Xi^{\infty,0} + \Xi^{0,\infty} + \Xi^{\infty,\infty}]$.  The first part of the lemma and 
Lemma~\ref{l: prop of Xi} imply that $\phipuu$ is in 
$Cv_q(\BA)$ for all $q$ in $(1,\infty)$, as required to conclude the proof of the lemma.  
\end{proof}

Consistently with the  notation of the previous sections, we denote by $\cD$ the Radon--Nikodym derivative 
of the action of $\BA$ on $\OV\BN$.  Notice that $\cD(v_1b_1,v_2b_2) = \cD_1(v_1b_1) \, \cD_2(v_2b_2)$,
where $\cD_1$ and $\cD_2$ are defined much as $\cD$, but are referred to the groups $\OV\BN_1 \BA_1$ and $\OV\BN_2 \BA_2$;
$\cD_1$ and its natural extension to $\OV\BN \BA$ 
(still denoted by $\cD_1$) have been already used in Section~\ref{s: loc-glob}.  

%

\noindent
The following lemma is the analogue for $\kuu$ of Lemma~\ref{l: multipliers loc glob}.  

\begin{lemma} \label{l: multipliers glob glob}
Suppose that $N> (n+\tre)/2$ and $1<p<2$.  Then there exists
a constant~$C$ such that for every $m$ in $\Mar{\TWp}{N}$ the following hold:
\begin{enumerate}
\item[\itemno1]
for all $a$ in $\BA^+$
$$
\begin{aligned}
\bigmod{\phipuu(a)}
& \leq C \bignorm{m}{\Mar{\TWp}{N}}\, \frac{1-\Phi_1(a_1)}{\log a_1}\, \frac{1-\Phi_2(a_2)}{\log a_2}  \\
\bigmod{a_1 \partial_{a_1}\phipuu(a)}
& \leq C \bignorm{m}{\Mar{\TWp}{N}}\, \frac{1-\Phi_1(a_1)}{\log^2 a_1}  \, \frac{1-\Phi_2(a_2)}{\log a_2} \\ 
\bigmod{a_2 \partial_{a_2}\phipuu(a)}
& \leq C \bignorm{m}{\Mar{\TWp}{N}}\, \frac{1-\Phi_1(a_1)}{\log a_1}  \, \frac{1-\Phi_2(a_2)}{\log^2 a_2} \\ 
\bigmod{a_1a_2 \partial_{a_1a_2}^2 \phipuu(a)}
& \leq C \bignorm{m}{\Mar{\TWp}{N}}\, \frac{1-\Phi_1(a_1)}{\log^2 a_1}  \, \frac{1-\Phi_2(a_2)}{\log^2 a_2}.
\end{aligned}
$$
Furthermore $\phipuu$, $\phipuu\chi_{\BA_1^+\times \BA_2^+}$, 
$\phipuu\chi_{\BA_1^-\times \BA_2^-}$, $\phipuu\chi_{\BA_1^-\times \BA_2^+}$ and $\phipuu\chi_{\BA_1^+\times \BA_2^-}$
are in $\Cvp{\BA}$;
\item[\itemno2]
$\kuu(a) = a^{-2\rho/p} \, \phipuu(a)$ for all $a$ in $\BA^+$, and 
$$
\bignorm{\chi_{\BA_2}^+\kuu(a_1, \cdot)}{\Cvp{\OBN_2\BA_2}} 
\leq C \bignorm{m}{\Mar{T_p}{N}} \frac{1-\Phi_1(a_1)}{\mod{\log a_1}} \,  a_1^{-2\rho_1/p};
$$  
\item[\itemno3]
$\ds \int_{\OBN}\int_{\BA} \bigmod{[\cD^{1/p}\kuu\chi_{\BA_1^-\times \BA_2^-}](vb)} \wrt v  \dtt b  
\leq C \bignorm{m}{\Horm{T_p}{N}}$, i.e., $\cD^{1/p}\kuu\chi_{\BA_1^-\times \BA_2^-}$ is in $\lu{\OBN;\lu{\BA}}$;
\item[\itemno4]
$
\ds\chi_{\BA_1}^+(b_1) \bignorm{\chi_{\BA_2}^+[\phipuu([v_1b_1]_+,\cdot)-\phipuu(b_1,\cdot)]}{\Cvp{\BA_2}}
\leq C  \bignorm{m_B}{\Mar{\TWp}{N}}\,  \frac{\mod{\al_1(H(v_1))}+1}{1+\log^2b_1} 
$
and a similar estimate holds with the roles of $\OBN_1\BA_1$ and $\OBN_2\BA_2$ reversed.  
\end{enumerate}
\end{lemma}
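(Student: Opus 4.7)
The overall strategy is to iterate Ionescu's rank-one argument of Lemma~\ref{l: one dim multipliers glob} over the two factors, in the spirit of Lemma~\ref{l: multipliers loc glob}, with $\phipuu$ and $\kuu$ playing the roles of $\phi_p$ and $\ku$.

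For part~\rmi, the pointwise bounds come from the defining formula \eqref{f: phip product II} by shifting each contour from $\fra_j^*$ to $\fra_j^*+i\rho_j^{p,\vep(a_j)}$ and integrating by parts in $\la_j$. One integration by parts in each variable already yields the first estimate, whereas the derivatives $a_j\partial_{a_j}$ and $a_1 a_2\,\partial^2_{a_1 a_2}$ generate extra factors of $\la_j$ that, after an additional integration by parts, produce the claimed $1/\log^2 a_j$ factors exactly as in the passage leading to \eqref{f: legame 2}. For the convolutor claims at the end of~\rmi, the membership $\phipuu\in Cv_q(\BA)$ is already supplied by Lemma~\ref{l: prop of Xi II}. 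I would then push each contour all the way to $\fra_j^*+i\de(p)\rho_j$ and integrate by parts $N_j$ times in each variable; combined with the estimate \eqref{f: HCest} on $\mbcuu$ this would give
$$
\bigmod{\phipuu(a)}
\leq C\,\bignorm{m_B}{\Mar{\TWp}{N}}\,[1-\Phi_1(a_1)]\,[1-\Phi_2(a_2)]\,
\prod_{j=1,2}\frac{a_j^{\de(p)\rho_j}}{\mod{\log a_j}^{N_j}},
$$
which is in $\lu{\BA_1^-\times\BA_2^-}$; shifting only one of the two contours shows analogously that the restrictions of $\phipuu$ to $\BA_1^-\times\BA_2^+$ and to $\BA_1^+\times\BA_2^-$ lie in $\lu{\BA_j;Cv_q(\BA_{j'})}$, hence in $Cv_p(\BA)$ by Lemma~\ref{l: iterated conv}~\rmi. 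The remaining corner $\BA_1^+\times\BA_2^+$ then belongs to $Cv_p(\BA)$ by difference with $\phipuu$.

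Parts~\rmii--\rmiii\ should follow the lines of Lemma~\ref{l: one dim multipliers glob}~\rmii--\rmiii, iterated over the two factor groups. The identity $\kuu=a^{-2\rho/p}\phipuu$ comes from shifting both contours in the defining integral for $\kuu$ by $i\de(p)\rho_j$; the slicewise bound in~\rmii\ then follows from viewing $\phipuu(a_1,\cdot)$ as a rank-one kernel on $\BG_2$ and invoking Lemma~\ref{l: one dim multipliers glob}~\rmiv--\rmv, with the $a_1$-dependence absorbed through the pointwise bound just obtained in~\rmi. For~\rmiii, the double Abel-transform trick of Lemma~\ref{l: one dim multipliers glob}~\rmiii\ applies unchanged in each variable: the Weyl-invariance in $b_j$ of $\cA_j(|\kuu|)$ lets me swap $\BA_j^-$ for $\BA_j^+$, the combined weight becomes $b_1^{-2\de(p)\rho_1}b_2^{-2\de(p)\rho_2}$, and the resulting integral on $\BA_1^+\times\BA_2^+$ converges precisely because $1/p'<1/p$.

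For~\rmiv\ I would use the fundamental theorem of calculus
$$
\phipuu([v_1b_1]_+,\cdot)-\phipuu(b_1,\cdot)
=\int_{b_1}^{[v_1b_1]_+}a_1\partial_{a_1}\phipuu(a_1,\cdot)\,\dtt{a_1},
$$
upgrade the pointwise second bound of~\rmi\ to a $\Cvp{\BA_2}$-valued bound by running the $\la_2$-integration through Lemma~\ref{l: one dim multipliers glob} rather than Lemma~\ref{l: one dim multipliers loc} (exactly as in the proof of Lemma~\ref{l: multipliers loc glob}~\rmiv), and then integrate, using $\log[v_1b_1]_+-\log b_1\leq\al_1(H(v_1))+2$ and $1/\log^2 a_1\leq C/(1+\log^2 b_1)$ for $a_1\geq b_1$. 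The main obstacle throughout is the propagation of $\Cvp$-boundedness through the product: pointwise $L^1$-style arguments alone do not reach $\chi_{\BA_1^+\times\BA_2^+}\phipuu$, and the crucial upstream fact that $\phipuu$ is itself a $Cv_q(\BA)$-kernel, supplied by Lemma~\ref{l: prop of Xi II} via the partition \eqref{f: smooth finite partition} and the Fig\`a-Talamanca--Herz multiplier properties collected in Theorem~\ref{t: Ap spaces}, is precisely what allows one to reduce the convolutor claim on the positive quadrant to $L^1$ bounds on the remaining three quadrants plus a subtraction.
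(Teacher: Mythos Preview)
Your outline is essentially correct and follows the paper's approach closely. One slip to flag: in part~\rmi, for the $\lu{\BA_1^-\times\BA_2^-}$ bound you say to push both contours all the way to $\fra_j^*+i\de(p)\rho_j$ and then integrate by parts $N_j$ times. That step fails as written: on the boundary line $\Theta_p^{(j)}(\la_j+i\de(p)\rho_j)=|\la_j|$, so $\partial_{\la_j}^{N_j}\mbcuu_{\de(p)\rho}$ blows up like $|\la_j|^{-N_j}$ near $\la_j=0$ and the resulting integral diverges. The bound you actually write, with the extra factor $a_j^{\de(p)\rho_j}$, is precisely what one obtains by \emph{not} shifting and integrating by parts on $\fra^*$ itself (where $\Theta_p^{(j)}$ is bounded below by $\de(p)|\rho_j|$); the decay on $\BA^-$ then comes from the prefactor $a^{\de(p)\rho}$ in \eqref{f: phip product II}. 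This is exactly what the paper does. With that correction your argument for all four parts matches the paper's: contour shifts with $\vep(a_j)$-dependent offset plus $N_j$-fold integration by parts for the pointwise bounds in~\rmi, the $\cU(a_1,\la_2)$ reduction to a rank-one H\"ormander norm in~\rmii\ and~\rmiv\ (the paper invokes Lemma~\ref{l: phip basic} rather than Lemma~\ref{l: one dim multipliers glob} at that step, but the content is the same), and the Abel-transform symmetry trick in~\rmiii.
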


\begin{proof}
First we prove \rmi.  We adapt an idea of Ionescu \cite[Proof of Theorem~8]{I1}. 
We move the contour of integration from $\fra^*$ to $\fra^*+i\rho^{p,\vep}$ in the 
definition of $\phipuu$ (see \eqref{f: phip product II} above),
where $\rho^{p,\vep} = \de(p)\rho - (\vep(a_1)\rho_1,\vep(a_2)\rho_2)$; the function 
$\vep$ will be determined later.  We obtain 
$$
\phipuu(a_1,a_2)
= \big[1-\Phi_1(a_1)\big]\,\big[1-\Phi_2(a_2)\big]\, a^{\vep(a)\rho} 
\int_{\fra^*} a^{i\la} \, \mbcuu(\la+i\rho^{p,\vep}) \wrt\la.   
$$
%
Suppose that $a$ is in $\BA^+$.  In the integral above we integrate by parts $N_1$ times with respect to $\la_1$ and $N_2$
times with respect to $\la_2$, and obtain that 
$$
\bigmod{\phipuu(a_1,a_2)}
\leq \frac{1-\Phi_1(a_1)}{(\log a_1)^{N_1}} \,\frac{1-\Phi_2(a_2)}{(\log a_2)^{N_2}} \, a^{\vep(a)\rho} 
\int_{\fra^*} \bigmod{\partial_{\la_1}^{N_1}\partial_{\la_2}^{N_2}  \mbcuu(\la+i\rho^{p,\vep})}  \wrt\la.   
$$
A straightforward computation shows that  
$$
\bigmod{\partial_{\la_1}^{N_1}\partial_{\la_2}^{N_2}  \mbcuu(\la+i\rho^{p,\vep})}  
\leq C \bignorm{m}{\Horm{\TWp}{N}} \frac{(1+\mod{\la_1})^{(n_1-1)/2}}{\bigmod{\la_1-\vep(a_1)}^{N_1}} \, 
\frac{(1+\mod{\la_2})^{(n_2-1)/2}}{ \bigmod{\la_2-\vep(a_2)}^{N_2}}. 
$$
We insert this estimate in the integral above, 
choose $\vep (a_1) = \vep/\log a_1$ and $\vep (a_2) = \vep/\log a_2$, and obtain the required estimate
$$
\bigmod{\phipuu(a_1,a_2)}
\leq C \, \frac{1-\Phi_1(a_1)}{\log a_1} \,\frac{1-\Phi_2(a_2)}{\log a_2}.  
$$
The required estimates for the derivatives of $\phipuu$ on $\BA^+$ are proved similarly.  We omit the details.

As for the remaining part of \rmi, 
we have already proved that $\phipuu$ is in $\Cvp{\BA}$ (see Lemma~\ref{l: prop of Xi II}).  
Suppose now that $a$ is in $\BA^-$.  The term $a^{\de(p)\rho}$ in \eqref{f: phip product II} 
vanishes exponentially as $a$ tends to~$0$.  
An integration by parts similar to that performed above, but without moving the contour of integration, 
shows that $\ds \Bigmod{\int_{\fra^*} a^{i\la} \, \mbcuu(\la)\wrt\la}
\leq C \bignorm{m}{\Horm{\TWp}{N}} \mod{\log a_1}^{-N_1} \, \mod{\log a_2}^{-N_2}$.  
Thus, $\phipuu$ is in $\lu{\BA^-}$, i.e., $\phipuu\chi_{\BA_1^-\times\BA_2^-}$ is in $\lu{\BA}$, whence
in $Cv_q{\BA}$ for all $q$ in $(1,\infty)$. 

Next, if $a_1\in \BA_1^-$ and $a_2\in \BA_2$, we move the path of integration from $\fra^*$ to
$\fra^* + i (\de(p)-\vep(a_2))\rho_2$, where $\vep(a_2) = \vep/\log a_2$, and integrate by parts $N_1$ times
with respect to $\la_1$.  We obtain 
$$
\phipuu(a_1,a_2)
= \frac{1-\Phi_1(a_1)}{(\log a_1)^{N_1}} \,a_1^{\de(p)\rho_1} \, \int_{\fra_1^*} \!\!\wrt \la_1\, a_1^{i\la_1}
[1-\Phi_2(a_2)] \int_{\fra_2^*} a_2^{i\la_2} \, \partial_{\la_1}^{N_1}\mbcuu_{\de(p)\rho_2}(\la) \wrt \la_2
$$
for all $a_1$ in $\BA_1^-$ and for every $a_2$ in $\BA_2$.  A straightforward calculation shows that 
$$
\bignorm{\partial_{\la_1}^{N_1}\mbcuu_{\de(p)\rho_2}(\la_1,\cdot)}{\Horm{T_p^{(2)}}{N_2}}
\leq C \bignorm{m}{\Horm{\TWp}{N}} (1+\mod{\la_1})^{-N_1+(n_1-1)/2}
\quant \la_1\in \fra_1^*.  
$$
Lemma~\ref{l: phip basic} implies that for each $\la_1$ in $\fra_1^*$ the function $\ds a_2\mapsto 
[1-\Phi_2(a_2)] \int_{\fra_2^*} a_2^{i\la_2} \, \partial_{\la_1}^{N_1}\mbcuu_{\de(p)\rho_2}(\la) \wrt \la_2$
is in $Cv_q(\BA_2)$ for all $q$ in $(1,\infty)$, with norm controlled by 
$C \bignorm{m}{\Horm{\TWp}{N}} (1+\mod{\la_1})^{-N_1+(n_1-1)/2}$.  Hence
$$
\begin{aligned}
\bignorm{\phipuu}{\lu{\BA_1^-;Cv_q(\BA_2)}}
& \leq C \bignorm{m}{\Horm{\TWp}{N}}  \int_{\BA_1^-} \!\!\wrt a_1\, 
   \frac{1-\Phi_1(a_1)}{(\log a_1)^{N_1}} \,a_1^{\de(p)\rho_1} \, \int_{\fra_1^*} (1+\mod{\la_1})^{-N_1+(n_1-1)/2} 
   \wrt \la_1 \\ 
& \leq C \bignorm{m}{\Horm{\TWp}{N}}.   
\end{aligned}
$$
Consequently $\phipuu\chi_{\BA_1^-\times\BA_2}$ is in $Cv_q(\BA)$ for every $q$ in $(1,\infty)$.   We have already proved that 
$\phipuu\chi_{\BA_1^-\times\BA_2^-}$ is in $Cv_q(\BA)$ for all $q$ in $(1,\infty)$.  Therefore 
$\phipuu\chi_{\BA_1^-\times\BA_2^+} = \phipuu\chi_{\BA_1^-\times\BA_2} - \phipuu\chi_{\BA_1^-\times\BA_2^-}$ 
is in $Cv_q(\BA)$ for all $q$ in $(1,\infty)$.  

A similar argument, with the roles of $\BA_1$ and $\BA_2$ interchanged, shows that $\phipuu\chi_{\BA_1\times\BA_2^-}$ 
and $\phipuu\chi_{\BA_1^+\times\BA_2^-}$ are in $Cv_q(\BA)$ for all $q$ in $(1,\infty)$.  Thus,
$
\phipuu\chi_{\BA_1^+\times\BA_2^+} 
= \phipuu - \phipuu\chi_{\BA_1^+\times\BA_2^-}-\phipuu\chi_{\BA_1^-\times\BA_2^+}-\phipuu\chi_{\BA_1^-\times\BA_2^-}  
$
is in $Cv_q(\BA)$ for all $q$ in $(1,\infty)$.  This concludes the proof of \rmi.  

%

Next, we prove \rmii.  The required formula for $\kuu$ follows directly from the definitions
of $\kuu$ and~$\phipuu$.  In order to prove the required bound for $\bignorm{\chi_{\BA_2}^+\kuu(b_1v_1, \cdot)}{\Cvp{\OBN_2\BA_2}}$,
we move the contour of integration in the definition of $\phipuu$ from $\fra^*$ to $\fra^*+ i \rho_1^{p,\vep(a_1)}$,
where $\rho_1^{p,\vep(a_1)} = \big(\de(p)-\vep/\mod{\log a_1}\big)\rho_1$, and we see that for each $a$
in $\BA^+$ 
$$
\kuu(a)
= \cB(a_1) \, [1-\Phi_2(a_2)] \, a^{-2\rho/p} \, 
    a_2^{\de(p)\rho_2} \int_{\fra_1^*}\int_{\fra_2^*} a_1^{i\la_1}a_2^{i\la_2} \, 
    \mbcuu\big(\la_1+i\rho_1^{p,\vep(a_1)},\la_2\big) \wrt \la_1\wrt \la_2,  
$$
where we have set $\cB(a_1) := \big[1-\Phi_1(a_1)\big] \,  \e^{\mod{\rho_1}\vep(a_1)\sign(\log a_1)}$
for the sake of brevity.  
We integrate by parts $N_1$ times in the integral above on $\fra_1^*$, and find that 
$$
\begin{aligned}
\kuu(a)
& = \frac{\cB(a_1)}{(\log a_1)^{N_1}} \, \big[1-\Phi_2(a_2)\big]\,  a^{-2\rho/p} 
    a_2^{\de(p)\rho_2} \int_{\fra_1^*}\!\int_{\fra_2^*} a_1^{i\la_1}a_2^{i\la_2} \, 
    [\partial_{\la_1}^{N_1} \mbcuu](\la_1+i\rho_1^{p,\vep(a_1)},\la_2) \wrt \la_1\wrt \la_2 \\  
& = [1-\Phi_2(a_2)]  \int_{\fra_2^*} a_2^{i\la_2-\rho_2} \, 
	[\check\bc_2^{-1}\cU(a_1, \cdot)](\la_2) \wrt \la_2,
\end{aligned} 
$$
where 
$$
\cU(a_1,\la_2)
:= \frac{\cB(a_1)}{(\log a_1)^{N_1}} \,
a_1^{-2\rho_1/p} \, \int_{\fra_1^*} a_1^{i\la_1}\, \partial_{\la_1}^{N_1}
\, [\check\bc_1^{-1}m_B](\la_1+i\rho_1^{p,\vep(a_1)},\la_2) \wrt \la_1.   
$$
By Theorem~\ref{t: Transference principle} (applied to $\OBN_2\BA_2$) 
and Lemma~\ref{l: one dim multipliers glob}~\rmiv, 
$$
\bignorm{\chi_{\BA_2^+}\kuu(a_1,\cdot)}{\Cvp{\OBN_2\BA_2}} 
\leq \bignorm{\cD_2^{1/p}\chi_{\BA_2^+}\kuu(a_1,\cdot)}{\lu{\OBN_2;\Cvp{\BA_2}}} 
\leq C \bignorm{\cU(a_1,\cdot)}{\Horm{T_p^2}{N_2}}.   
$$
In order to estimate the right hand side, we are led to consider 
$\partial_{\la_2}^{j_2}\cU(a_1,\la_2)$ for $j_2\in \{0,1,\ldots,N_2\}$.    Observe that 
$$
\begin{aligned}
\bigmod{\partial_{\la_2}^{j_2}\cU(a_1,\la_2)}
& \leq  \frac{1-\Phi_1(a_1)}{\mod{\log a_1}^{N_1}} \,
	a_1^{-2\rho_1/p} \, \int_{\fra_1^*}  \bigmod{\partial_{\la_1}^{N_1}\partial_{\la_2}^{j_2}
	\, [\check\bc_1^{-1}m_B](\la_1+i\rho_1^{p,\vep(a_1)},\la_2)} \wrt \la_1  \\
& \leq  C\, \frac{\bignorm{m_B}{\Mar{T_p}{N}}}{\Theta_p^{(2)}(\la_2)^{j_2} }  \frac{1-\Phi_1(a_1)}{\mod{\log a_1}^{N_1}} \,
	a_1^{-2\rho_1/p} \, \int_{\fra_1^*}  
	\frac{(1+\mod{\la_1})^{(n_1-1)/2}}{\Theta_p^{(1)}(\la_1+i\rho_1^{p,\vep(a_1)})^{N_1}} \wrt \la_1;
\end{aligned}
$$
we have used the obvious bound $\bigmod{\cB(a_1)} \leq C \, [1-\Phi_1(a_1)]$ in the first inequality above,
and the assumption on $m_B$ together with the estimate \eqref{f: HCest} for the Harish-Chandra function $\check\bc_1^{-1}$
in the second.  Now, there exists a constant $C$ such that 
$$
\mod{\log a_1}^{-N_1}  \int_{\fra_1^*}  
	\frac{(1+\mod{\la_1})^{(n_1-1)/2}}{\Theta_p^{(1)}(\la_1+i\rho_1^{p,\vep(a_1)})^{N_1}} \wrt \la_1
\leq C\, \mod{\log a_1}^{-1} 
$$
(see \cite[p.~114--115]{I1}).  Altogether, we obtain the estimate 
$$
\bignorm{\cU(a_1,\cdot)}{\Horm{T_p^2}{N_2}}
\leq  C\, \bignorm{m_B}{\Mar{T_p}{N}}  \frac{1-\Phi_1(a_1)}{\mod{\log a_1}} \, a_1^{-2\rho_1/p},  
$$
whence
\begin{equation}
\bignorm{\chi_{\BA_2^+}\kuu(a_1,\cdot)}{\Cvp{\OBN_2\BA_2}} 
\leq  C\, \bignorm{m_B}{\Mar{T_p}{N}}  \frac{1-\Phi_1(a_1)}{\mod{\log a_1}} \, a_1^{-2\rho_1/p}.  
\end{equation}
The proof of \rmii\ is complete.  

Now we prove \rmiii.  Our proof is the analogue in our case of the proof that Ionescu gave in the rank one case.   
Recall that the Abel transform of a function $F$ on $\OBN\BA$ is defined by 
$\cA F(b) := b^\rho \,\int_{\OBN} \, F(vb) \wrt v$ for every $b$ in $\BA$.  Thus, 
$$
\int_{\OBN}\int_{\BA} \bigmod{[\cD^{1/p}\kuu\chi_{\BA_1^-\times \BA_2^-}](vb)} \wrt v  \dtt b  
= \int_{\BA^-}  b^{((2/p)-1)\rho}\,\cA \big(\mod{\kuu}\big)(b)  \dtt b.
$$
Since the Abel transform of a $\BK$--bi-invariant function is a Weyl-invariant function on $\BA$,
we may write $\cA \big(\mod{\kuu}\big)(b^{-1})$ instead of $\cA \big(\mod{\kuu}\big)(b)$ 
in the last integral above, and then change variables ($b' = b^{-1}$):  we obtain 
\begin{equation} \label{f: insert}
\int_{\OBN}\int_{\BA} \bigmod{[\cD^{1/p}\kuu\chi_{\BA_1^-\times \BA_2^-}](vb)} \wrt v  \dtt b  
= \int_{\BA^+}  (b')^{-\de(p)\rho}\,\cA \big(\mod{\kuu}\big)(b') \dtt {b'}.
\end{equation}
We deduce from \rmi\ and the fact that $[vb']_+^{-2\rho/p} \leq (b')^{-2\rho/p} \, P(v)^{2/p}$
for all $b'$ in $\BA^+$ (by the analogue of \eqref{f: decom Iwas} in the reduced case) the following estimate 
$$
\begin{aligned}
\cA \big(\mod{\kuu}\big)(b')
& \leq C \, \bignorm{\phipuu}{\ly{\BA^+}} \, (b')^{-\de(p)\rho}\, \int_{\OBN} P(v)^{2/p} \wrt v \\
& \leq C \bignorm{m}{\Horm{T_p}{N}} (b')^{-\de(p)\rho}
\quant b'\in \BA^+.  
\end{aligned}
$$  
We insert this estimate in the integral on the right hand side of \eqref{f: insert} and observe that 
the integral 
$\int_{\BA^+}  (b')^{2(1/p'-1/p)\rho}\,\dtt {b'}$ is convergent, because $1<p<2$.  The desired estimate follows.  

Finally, we prove \rmiv.  We prove the first of the two estimate.  The proof of the second is similar, and it is
obtained from the first by interchanging the roles of $\BX_1$ and $\BX_2$.  Observe that, trivially, 
$\ds\phipuu([v_1b_1]_+,b_2)-\phipuu(b_1,b_2) = \int_{b_1}^{[v_1b_1]_+} \!\! a_1\partial_{a_1} \phipuu(a_1,b_2) \dtt{a_1}$.
Recall that $\phipuu$ is defined in formula~\eqref{f: phip product II}.  Thus,
$$
\chi_{\BA_2^+} (b_2)\, a_1\partial_{a_1} \phipuu(a_1,b_2)
= \chi_{\BA_2^+} (b_2)\,  [1-\Phi_1(b_2)] \int_{\fra_2^*} b_2^{\de(p)+i\la_2} \, 
[\check \bc_2^{-1} \cU(a_1,\cdot)](\la_2) \wrt \la_2, 
$$
where $\cU(a_1,\la_2) = \Upsilon_0(a_1,\la_2) + \Upsilon_0(a_1,\la_2)$, 
$$
\Upsilon_0(a_1,\la_2) 
:= -\Phi_1'(a_1) \,  a_1^{1+\de(p)\rho_1} \, \int_{\fra_1^*}  a_1^{i\la_1}\,  
[\check\bc_1^{-1} m_B(\cdot,\la_2)](\la_1) \wrt\la_1
$$
and 
$$
\Upsilon_1(a_1,\la_2)
:= [1-\Phi_1(a_1)] \,  a_1^{\de(p)\rho_1} \,\int_{\fra_1^*} [\de(p)\rho_1+i\la_1] \,  a_1^{i\la_1}\,  
[\check\bc_1^{-1} m_B(\cdot,\la_2)](\la_1) \wrt\la_1.  
$$
By Lemma~\ref{l: phip basic}, 
\begin{equation} \label{f: cU Up}
\begin{aligned}
\bignorm{\chi_{\BA_2^+} \, a_1\partial_{a_1} \phipuu(a_1,\cdot)}{\Cvp{\BA_2}}
& \leq \bignorm{\cU(a_1,\cdot)}{\Horm{T_p^{(2)}}{N_2}} \\
& \leq \bignorm{\Upsilon_0(a_1,\cdot)}{\Horm{T_p^{(2)}}{N_2}} + \bignorm{\Upsilon_1(a_1,\cdot)}{\Horm{T_p^{(2)}}{N_2}}. 
\end{aligned}
\end{equation} 
%
%
We show how to bound $\bignorm{\Upsilon_1(a_1,\cdot)}{\Horm{T_p^{(2)}}{N_2}}$.  
The estimate of $\bignorm{\Upsilon_0(a_1,\cdot)}{\Horm{T_p^{(2)}}{N_2}}$ is easier, and the details are left to
the interested reader.  
In order to estimate $\bignorm{\Upsilon_1(a_1,\cdot)}{\Horm{T_p^{(2)}}{N_2}}$, we are led to consider,
for all $j_2 \in \{0,1,\ldots, N_2\}$,
$$
\partial_{\la_2}^{j_2}\Upsilon_1(a_1,\la_2)
= [1-\Phi_1(a_1)] \,  a_1^{\de(p)\rho_1} \,\int_{\fra_1^*} [\de(p)\rho_1+i\la_1] \,  a_1^{i\la_1}\,  
[\check\bc_1^{-1} \partial_{\la_2}^{j_2}m_B(\cdot,\la_2)](\la_1) \wrt\la_1.  
$$
We first shift the contour of integration from $\fra_1^*$ to $\fra_1^*+i[\de(p)-\vep(a_1)]\rho_1$ (here $\vep(a_1) = \vep/\log a_1$),
and then integrate by parts $N_1$ times with respect to $\la_1$.  We obtain that 
$$
\partial_{\la_2}^{j_2}\Upsilon_1(a_1,\la_2)
= \frac{1-\Phi_1(a_1)}{(i\log a_1)^{N_1}} \, a_1^{\vep(a_1)\rho_1}\,\vep(a_1)\rho_1  \int_{\fra_1^*}   a_1^{i\la_1}\,  
\partial_{\la_1}^{N_1} [\check\bc_1^{-1} \partial_{\la_2}^{j_2}m_B(\cdot,\la_2)](\la_1+i[\de(p)-\vep(a_1)]\rho_1 )\wrt \la_1.  
$$
By assumption,
$$
\begin{aligned}
& \bigmod{\partial_{\la_1}^{N_1} [\check\bc_1^{-1} \partial_{\la_2}^{j_2}m_B(\cdot,\la_2)](\la_1+i[\de(p)-\vep(a_1)]\rho_1 )}\\
& \leq C \bignorm{m_B}{\Mar{T_p}{N}} \, \Theta_p^{(1)}(\la_1+i[\de(p)-\vep(a_1)]\rho_1)^{-N_1} \,  
	\Theta_p^{(2)}(\la_2)^{-j_2} \, (1+\mod{\la_1})^{(n_1-1)/2}\\ 
& \leq C \bignorm{m_B}{\Mar{T_p}{N}} \, \bigmod{\la_1-\vep(a_1)\rho_1}^{-N_1} \,  
	\Theta_p^{(2)}(\la_2)^{-j_2} \, (1+\mod{\la_1})^{(n_1-1)/2},
\end{aligned}
$$
whence
$$
\begin{aligned}
\bignorm{\Upsilon_1(a_1,\cdot)}{\Horm{T_p^{(2)}}{N_2}}
& \leq C \bignorm{m_B}{\Mar{T_p}{N}} \, 
	\frac{1-\Phi_1(a_1)}{\mod{\log a_1}^{N_1}} \, \vep(a_1) \int_{\fra_1^*} 
	\frac{(1+\mod{\la_1})^{(n_1-1)/2}}{\bigmod{\la_1-\vep(a_1)\rho_1}^{N_1}} \wrt\la_1 \\  
& \leq C \bignorm{m_B}{\Mar{T_p}{N}} \, \frac{1-\Phi_1(a_1)}{\mod{\log a_1}^2}.  
\end{aligned}
$$
A similar estimate holds for $\bignorm{\Upsilon_0(a_1,\cdot)}{\Cvp{\OBN_2\BA_2}}$.  By combining the estimates
for $\Upsilon_0$ and $\Upsilon_1$ with \eqref{f: cU Up}, gives the required estimate.  

The proof of the lemma is complete.  
\end{proof}

\begin{proposition} \label{p: kB2}
There exists a constant $C$ such that the following hold:
\begin{enumerate}
\item[\itemno1]
$\bignorm{\koo}{\Cvp{\BX}} 
\leq C \bignorm{m_B}{\Mar{T_p}{N}}$;  
\item[\itemno2]
$\bignorm{\kuo}{\Cvp{\BX}} 
\leq C \bignorm{m_B}{\Mar{T_p}{N}}$ and a similar estimate holds for $\kou$;  
\item[\itemno3]
$\bignorm{\kuu}{\Cvp{\BX}} 
\leq C \bignorm{m_B}{\Mar{T_p}{N}}$.  
\end{enumerate}
\end{proposition}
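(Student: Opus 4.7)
The kernels $\koo$, $\kuo$, $\kou$, $\kuu$ form an increasing hierarchy of difficulty, paralleling $\kop$, $\kuR$, $\kuA$ in Section~\ref{s: loc-glob}. An $\om_j$-factor in the integrand produces rapid decay in the $a_j$ variable via~\eqref{f: estIonescu}, which allows one to replace a semidirect-product transference step by a pointwise integrable bound coupled with Herz's majorization in the form of Lemma~\ref{l: iterated conv}~(iii); its absence forces full two-sided transference on $\OBN\BA$ through Theorem~\ref{t: Transference principle}.

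\textbf{The kernels $\koo$, $\kuo$, $\kou$.} For $\koo$ I would set
\[\cN_{\om_1}(a_1,\la_2):=[1-\Phi_1(a_1)]\int_{\fra_1^*} a_1^{i\la_1-\rho_1-2\al_1}\,\om_1(\la_1,a_1)\,[\check\bc_1^{-1}m_B](\la_1,\la_2)\wrt\la_1,\]
so that for fixed $a_1$, $\koo(a_1,\cdot)$ is (up to the factor $[1-\Phi_2(a_2)]$) the rank-one $\ko$-kernel in $a_2$ with multiplier $\cN_{\om_1}(a_1,\cdot)$. Lemma~\ref{l: one dim multipliers glob}~(i) gives
$\int_{\BG_2}\bigmod{\koo(a_1,\cdot)(g_2)}\vp_{i\de(p)\rho_2}(g_2)\wrt g_2\le C\bignorm{\cN_{\om_1}(a_1,\cdot)}{\Horm{T_p^{(2)}}{N_2}}$.
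A contour shift $\la_1\mapsto\la_1+i\rho_1^{p,\vep(a_1)}$, $N_1$ integrations by parts, and the combination of \eqref{f: estIonescu} with the Marcinkiewicz condition yield
\[\bignorm{\cN_{\om_1}(a_1,\cdot)}{\Horm{T_p^{(2)}}{N_2}}\le C\bignorm{m_B}{\Mar{T_p}{N}}\,\frac{1-\Phi_1(a_1)}{(\log a_1)^{N_1}}\,a_1^{(\vep-2/p)\rho_1-2\al_1}.\]
Lemma~\ref{l: iterated conv}~(iii) and the convergence argument of Proposition~\ref{p: kB1}~(i) (using $\vp_{i\de(p)\rho_1}\de_1\asymp a_1^{(2/p')\rho_1}$ at infinity and a sufficiently small $\vep$) then give~(i). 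For $\kuo$ the decay lies on the $a_2$ side; I would define
$\cN_{\om_2}(\la_1,a_2):=[1-\Phi_2(a_2)]\int a_2^{i\la_2-\rho_2-2\al_2}\om_2(\la_2,a_2)[\check\bc_2^{-1}m_B(\la_1,\cdot)](\la_2)\wrt\la_2$,
so that, for fixed $a_2$, $\kuo(\cdot,a_2)$ is the rank-one $\ku$-kernel in $a_1$ with multiplier $\cN_{\om_2}(\cdot,a_2)$. Lemma~\ref{l: one dim multipliers glob}~(v) yields
\[\bignorm{\kuo(\cdot,a_2)}{\Cvp{\BX_1}}\le C\bignorm{\cN_{\om_2}(\cdot,a_2)}{\Horm{T_p^{(1)}}{N_1}}\le C\bignorm{m_B}{\Mar{T_p}{N}}\,[1-\Phi_2(a_2)]\,\frac{a_2^{(\vep-2/p)\rho_2-2\al_2}}{(\log a_2)^{N_2}},\]
and an integration against $\de_2\vp_{i\de(p)\rho_2}$ via Lemma~\ref{l: iterated conv}~(iii) with $\BG_1,\BG_2$ interchanged produces~(ii); $\kou$ is handled symmetrically.

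\textbf{The kernel $\kuu$.} Here both $\om$-factors are absent. I would view $\kuu$ as a $\BK$-bi-invariant function on $\OBN\BA=(\OBN_1\times\OBN_2)(\BA_1\times\BA_2)$, realized as a semidirect product of $\OBN$ and $\BA$, and apply Theorem~\ref{t: Transference principle}: $\bignorm{\kuu}{\Cvp{\BX}}\le\bignorm{\cD^{1/p}\kuu}{\lu{\OBN;\Cvp{\BA}}}$. Split $\cD^{1/p}\kuu$ via $1=\sum_{\ep_1,\ep_2\in\{+,-\}}\chi_{\BA_1^{\ep_1}}\chi_{\BA_2^{\ep_2}}$. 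The $(-,-)$ piece is in $\lu{\OBN;\lu{\BA}}$ directly by Lemma~\ref{l: multipliers glob glob}~(iii). Each mixed-sign piece is handled by a hybrid argument: the Abel-transform technique of the proof of Lemma~\ref{l: multipliers glob glob}~(iii), restricted to the $\BA_j^-$ factor, gives $\lu{\OBN_j\BA_j^-}$-integrability in one direction, while in the other direction the problem reduces to a one-dimensional transference settled by an analogue of the $\taupuuA,\taupduA,\tauptuA$ decomposition used in Section~\ref{s: loc-glob}. For the $(+,+)$ piece the identities $\kuu(a)=a^{-2\rho/p}\phipuu(a)$ and $[v_jb_j]_+=b_j\exp(H(v_j))\exp(E(v_j,b_j)H_0^{(j)})$ give
\[\cD^{1/p}\kuu(vb)=P(v_1)^{2/p}P(v_2)^{2/p}\,\phipuu([v_1b_1]_+,[v_2b_2]_+)\quant b\in\BA_1^+\times\BA_2^+,\]
and I would expand $\phipuu([v_1b_1]_+,[v_2b_2]_+)=\phipuu(b_1,b_2)+D_1+D_2+D_{12}$ (plus pieces arising from the Iwasawa correction factors $\exp(E(v_j,b_j)H_0^{(j)})-1$), where $D_j$ is the single $j$-th increment and $D_{12}$ the mixed double increment. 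This is the two-variable counterpart of the $\taupuuu,\taupduu,\tauptuu,\taupquu$ decomposition. The leading piece $\phipuu(b_1,b_2)$ lies in $\Cvp{\BA}$ by Lemma~\ref{l: multipliers glob glob}~(i) and yields a term in $\lu{\OBN;\Cvp{\BA}}$ since $P(v_1)^{2/p}P(v_2)^{2/p}$ is in $\lu{\OBN}$ for $p<2$; each single-increment piece $P(v_1)^{2/p}P(v_2)^{2/p}D_j$ is in $\lu{\OBN;\lu{\BA}}$ by Lemma~\ref{l: multipliers glob glob}~(iv), using that $[\al_j(H(v_j))+1]P(v_j)^{2/p}$ is in $\lu{\OBN_j}$ and $(1+\log^2 b_j)^{-1}$ is integrable on $\BA_j^+$.

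\textbf{Main obstacle.} The crux is the double increment
\[D_{12}=\phipuu([v_1b_1]_+,[v_2b_2]_+)-\phipuu([v_1b_1]_+,b_2)-\phipuu(b_1,[v_2b_2]_+)+\phipuu(b_1,b_2),\]
which I would rewrite by a two-variable Newton--Leibniz identity as
$\int_{b_1}^{[v_1b_1]_+}\int_{b_2}^{[v_2b_2]_+}a_1a_2\partial^2_{a_1a_2}\phipuu(a_1,a_2)\frac{\wrt a_1}{a_1}\frac{\wrt a_2}{a_2}$
and estimate via the mixed-derivative bound
$\bigmod{a_1a_2\partial^2_{a_1a_2}\phipuu(a)}\le C\bignorm{m_B}{\Mar{T_p}{N}}\frac{1-\Phi_1(a_1)}{\log^2 a_1}\frac{1-\Phi_2(a_2)}{\log^2 a_2}$
provided by Lemma~\ref{l: multipliers glob glob}~(i). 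The resulting factor $(1+\log^2 b_1)^{-1}(1+\log^2 b_2)^{-1}$ compensates precisely the linear growth $\log[v_jb_j]_+-\log b_j\le\al_j(H(v_j))+2$, and the double integral on $\OBN\times\BA$ converges because $[\al_1(H(v_1))+1][\al_2(H(v_2))+1]P(v_1)^{2/p}P(v_2)^{2/p}$ is in $\lu{\OBN}$ whenever $p<2$.
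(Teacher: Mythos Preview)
Your overall strategy is correct and matches the paper's approach closely: parts~\rmi\ and~\rmii\ via Herz majorization (Lemma~\ref{l: iterated conv}~\rmiii), and part~\rmiii\ via the four-quadrant splitting on $\BA_1^\pm\times\BA_2^\pm$, with the $(+,+)$ piece handled by the double-increment decomposition and Lemma~\ref{l: multipliers glob glob}. The treatment of $D_{12}$ via the mixed second derivative is exactly right.

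There is, however, one genuine slip. You assert that each single-increment piece $P(v_1)^{2/p}P(v_2)^{2/p}D_j\,\chi^{+,+}$ lies in $\lu{\OBN;\lu{\BA}}$. This is false: for $D_1=\phipuu([v_1b_1]_+,b_2)-\phipuu(b_1,b_2)$, the pointwise bound coming from Lemma~\ref{l: multipliers glob glob}~\rmi\ is at best
\[
|D_1|\le C\bignorm{m_B}{\Mar{T_p}{N}}\,\frac{1-\Phi_2(b_2)}{\log b_2}\cdot\frac{|\al_1(H(v_1))|+2}{1+\log^2 b_1},
\]
and $(1-\Phi_2(b_2))/\log b_2$ is \emph{not} integrable on $\BA_2^+$ with respect to $\wrt b_2/b_2$. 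What Lemma~\ref{l: multipliers glob glob}~\rmiv\ actually delivers is a $\Cvp{\BA_2}$ bound, not an $L^1(\BA_2)$ bound: it places $\chi^{+,+}P(v_1)^{2/p}P(v_2)^{2/p}D_1$ in $\lu{\OBN\times\BA_1;\Cvp{\BA_2}}$, which embeds into $\lu{\OBN;\Cvp{\BA}}$ via Lemma~\ref{l: iterated conv}~\rmi. This is precisely how the paper treats $\taupduu$ and $\tauptuu$. You cite the right lemma, but the conclusion you draw from it is the wrong function space; the fix is to keep the $\Cvp{\BA_{3-j}}$ norm in the variable where no increment is taken.

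Two minor remarks. First, your displayed formula for $\cD^{1/p}\kuu(vb)$ on $\BA^+$ omits the Iwasawa correction factors $e^{-(2/p)|\rho_j|E_j(v_j,b_j)}$; you acknowledge them parenthetically, and the paper handles them by writing $\cE_1\cE_2=(\cE_1-1)(\cE_2-1)+(\cE_1-1)+(\cE_2-1)+1$ before the $D$-decomposition, which produces three extra but harmless terms. Second, your description of the mixed-sign quadrants is somewhat schematic; the paper's route is to bound $\bignorm{\chi_{\BA_2^+}\kuu(a_1,\cdot)}{\Cvp{\OBN_2\BA_2}}$ uniformly in $a_1$ (this is Lemma~\ref{l: multipliers glob glob}~\rmii, whose proof indeed uses the one-dimensional $\tau$-decomposition you allude to) and only then run the Abel-transform argument in the $\BA_1^-$ direction.
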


\begin{proof}
First we prove \rmi.  We claim that 
$\ds \int_{\BG} \bigmod{\koo(g)} \, \vp_{i\de(p)\rho}(g) \wrt g
\leq C \bignorm{m}{\Horm{T_p}{N}}$.  The claim and Herz's majorizing principle imply that 
	$\koo$ is in $\Cvp{\BX}$, with the required norm estimate (see Lemma~\ref{l: iterated conv}~\rmiii). 

To prove the claim, let $\vep>0$ be a number to be determined in a moment.  In the formula defining $\koo$
(just below \eqref{f: dec for B2}) we move the path of integration from $\fra^*$ to $\fra^*+i\rho^{p,\vep}$ 
(here $\rho = (\rho_1,\rho_2)$ and $\rho^{p,\vep} = (\de(p)-\vep)\rho$), and obtain that 
$$
\koo(a_1,a_2) 
= [1-\Phi_1(a_1)]\, [1-\Phi_2(a_2)] \,  a_1^{(\vep-2/p)\rho_1-2\al_1} a_2^{(\vep-2/p)\rho_2-2\al_2}\, \Xi_\om(a_1,a_2),
$$
where
$$
\Xi_\om(a_1,a_2)
	= \int_{\fra^*} a_1^{i\la_1} \, a_2^{i\la_2} \, \om(\la+i\rho^{p,\vep},a) \, \mbcuu(\la+i\rho^{p,\vep})
	\wrt \la. 
$$
and $\om(\zeta,a) := \om_1(\zeta,a_1)\, \om_2(\zeta,a_2)$.  It is not hard to check that $\Xi_\om$ is bounded.  
This follows from a routine $N$-fold integration by parts, which gives
$$
\bigmod{\Xi_\om(a_1,a_2)}
\leq \frac{1}{\log^{N_1}\!a_1} \frac{1}{\log^{N_2}\!a_2}  \int_{\fra^*} \bigmod{[\partial_{\la_1}^{N_1} \partial_{\la_2}^{N_2} 
	\big[\om(\la+i\rho^{p,\vep},a) \, \mbcuu(\la+i\rho^{p,\vep})\big]} \wrt \la. 
$$
Now, the assumption that $N > (n+\tre)/2$ and the estimates \eqref{f: HCest} for the Harish-Chandra $\bc$ function 
and \eqref{f: estIonescu} for $\om$ imply that the integral above is bounded by 
$C \bignorm{m_B}{\Mar{T_p}{N}}$.  Thus, 
$$
\begin{aligned}
& \int_{\BG} \bigmod{\koo(g)} \, \vp_{i\de(p)\rho}(g) \wrt g \\
& \leq C \bignorm{m_B}{\Mar{T_p}{N}} \int_{\BA^+} \frac{1-\Phi_1(a_1)}{\log^{N_1}\!a_1} \frac{1-\Phi_2(a_2)}{\log^{N_2}\!a_2}  
	\,  a_1^{(\vep-2/p)\rho_1-2\al_1} a_2^{(\vep-2/p)\rho_2-2\al_2}\, \, \vp_{i\de(p)\rho}(a)\, \de(a) \wrt a.
\end{aligned}
$$
The last integral is easily seen to be convergent as long as $\vep$ is small enough, thereby completing the proof of \rmi.   

Next we prove \rmii.  We prove the estimate for $\kuo$; the estimate for $\kou$ can be proved in a similar way, with the roles
of $\BX_1$ and $\BX_2$ interchanged. 
We claim that 
$$
\int_{\BG_2} \bignorm{\kuo(\cdot,g_2)}{\Cvp{\BX_1}} \, \vp_{i\de(p)\rho}^{(2)}(g_2) \wrt g_2
\leq C \bignorm{m}{\Horm{T_p}{N}}.
$$  
The claim implies that $\kuo$ is in $\Cvp{\BX}$, with the required norm estimate,
by Lemma~\ref{l: iterated conv}~\rmii.
It is convenient to set
\begin{equation} \label{f: Nod}
\Nod(\la_1,a_2)
:= [1-\Phi_2(a_2)] \int_{\fra_2^*} a_2^{i\la_2-\rho_2-2\al_2}\,\om_2(\la_2,a_2)  
       \, \cdinvmB(\la_1,\la_2) \wrt \la_2. 
\end{equation} 
The function $\Nod$ is the analogue of $\Nou$, defined in \eqref{f: N}, but with the role of the variables 
reversed.  Fr{}om the formula defining $\kuo$ (just below formula \eqref{f: dec for B2}) we see that 
$$
\kuo(a_1,a_2)
= [1-\Phi_1(a_1)]\, \int_{\fra_1^*} a^{i\la_1-\rho_1} \, [\check\bc_1^{-1} \Nod](\la_1,a_2) \wrt \la_1.    
$$
 From Lemma~\ref{l: one dim multipliers glob}~\rmi\  we see that $\bignorm{\kuo(\cdot,a_2)}{\Cvp{\BX_1}}
\leq C \bignorm{\Nod(\cdot,a_2)}{\Horm{T_p^{(1)}}{N_1}}$.  In order to estimate the decay of the last norm
as $a_2$ tends to infinity, we move the path of integration from $\fra_2^*$ to $\fra_2^*+i\rho_2^{p,\vep}$ 
in the integral defining $\partial_{\zeta_1}^j\Nod$ (see \eqref{f: Nod} above), and obtain
$$
\partial_{\la_1}^j\!\Nod(\la_1,a_2)
= [1-\Phi_2(a_2)] \, a_2^{(\vep-2/p)\rho_2-2\al_2}
   \int_{\fra_2^*}\!\! a_2^{i\la_2}\,\om_2(\la_2+i\rho_2^{p,\vep}\!,a_2) 
   \, \cdinvderzetaujumB(\la_1,\la_2+i\rho_2^{p,\vep}) \wrt \la_2. 
$$
Much as in \rmi\ above, a routine $N_2$-fold integration by parts in the last integral shows that 
$$
\begin{aligned}
& \bigmod{\partial_{\la_1}^j\!\Nod(\la_1,a_2)} \\
& \leq \frac{1-\Phi_2(a_2)}{\log^{N_2}\!\! a_2} \, a_2^{(\vep-2/p)\rho_2-2\al_2}
     \int_{\fra_2^*}\!\! \bigmod{\partial_{\la_2}^{N_2}\big\{\om_2(\la_2+i\rho_2^{p,\vep}\!,a_2) 
     \, \cdinvderlaujumB(\la_1,\la_2+i\rho_2^{p,\vep})\big\}} \wrt \la_2 \\  
& \leq C \bignorm{m}{\Horm{T_p}{N}} \frac{1-\Phi_2(a_2)}{\log^{N_2}\!\! a_2} \, a_2^{(\vep-2/p)\rho_2-2\al_2}
	\, \Theta_p^{(1)}(\la_1)^{-j_1}.  
\end{aligned}
$$
By combining the estimates above, we obtain that 
$$
\begin{aligned}
& \int_{\BG_2} \bignorm{\kuo(\cdot,g_2)}{\Cvp{\BX_1}} \, \vp_{i\de(p)\rho_2}^{(2)}(g_2) \wrt g_2 \\
& \leq C \bignorm{m_B}{\Mar{T_p}{N}} \int_{\BA_2^+} \frac{1-\Phi_2(a_2)}{\log^{N_2}\!a_2}
	\,  a_2^{(\vep-2/p)\rho_2-2\al_2}\, \, \vp_{i\de(p)\rho_2}^{(2)}(a_2)\, \de_2(a_2) \wrt a_2.
\end{aligned}
$$
The last integral is easily seen to be convergent as long as $\vep$ is small enough, thereby completing the proof of \rmii.   

Finally, we prove \rmiii.  With a slight abuse of notation we denote by $\kuu$ both the function on $\BA^+$
defined just below formula \eqref{f: dec for B2} and its $\BK$--bi-invariant extension to $\BG$.  
Then we interpret $\kuu$ as a function on the semidirect product $\OBN\BA$. 
It is convenient to split $\kuu$ as follows
\begin{equation*} 
\kuu
= \kuu \chi_{\BA_1^-\times \BA_2^-} +  \kuu \chi^{-,+} +  \kuu \chi^{+,-} +  \kuu \chi^{+,+},
\end{equation*}
where, for the sake of brevity, we write $\chi_{\BA_1^-\times \BA_2^-}$ instead of $\chi_{\OBN_1\BA_1^-\times\OBN_2\BA_2^-}$,
and similarly for $\chi^{-,+}$, $\chi^{+,-}$ and $\chi^{+,+}$.
By Lemma~\ref{l: multipliers glob glob}~\rmiii\
$$
\bignorm{\kuu\chi_{\BA_1^-\times \BA_2^-}}{\Cvp{\OBN\BA}} 
\leq C  \bignorm{m}{\Mar{T_p}{N}}.    
$$
We shall prove similar estimates for $\kuu \chi^{-,+}$, $\kuu \chi^{+,-}$ and $\kuu \chi^{+,+}$.  

First we analyze $\kuu \chi^{-,+}$.  By Lemma~\ref{l: iterated conv}~\rmi\ and Theorem~\ref{t: Transference principle} 
(the latter applied to the group $\OBN_1\BA_1$) 
$$
\begin{aligned}
\bignorm{\kuu\chi^{-,+}}{\Cvp{\OBN\BA}} 
& \leq \Bignorm{\chi_{\BA_1^-}\bignorm{\chi_{\BA_2^+} \kuu}{\Cvp{\OBN_2\BA_2}}}{\Cvp{\OBN_1\BA_1}} \\
& \leq \int_{\OBN_1} \int_{\BA_1} \chi_{\BA_1^-}(b_1) \, \cD_1^{1/p}(b_1) 
	\bignorm{\chi_{\BA_2^+} \kuu(v_1b_1, \cdot)}{\Cvp{\OBN_2\BA_2}} \wrt v_1 \dtt{b_1}.
\end{aligned}
$$
By arguing as in the proof of Lemma~\ref{l: one dim multipliers glob}~\rmiii, we see that the right hand side may be rewritten as 
$\ds \int_{\BA_1^+} b_1^{-\de(p)\rho_1} \, \cA(\eta) (b_1) \dtt {b_1}$, where we have set 
$\eta(v_1b_1) :=  \bignorm{\chi_{\BA_2^+} \kuu(v_1b_1, \cdot)}{\Cvp{\OBN_2\BA_2}}$.
Recall the estimate 
$$
\bignorm{\chi_{\BA_2^+}\kuu(v_1b_1,\cdot)}{\Cvp{\OBN_2\BA_2}} 
\leq C \bignorm{m_B}{\Mar{T_p}{N}}  \frac{1-\Phi_1(a_1)}{\mod{\log a_1}} \, a_1^{-2\rho_1/p}.  
$$
proved in Lemma~\ref{l: multipliers glob glob}~\rmii, where $a_1 = [v_1b_1]_+$.  Since $b_1$ is in $\BA_1^+$,
$([v_1b_1]_+)^{-2\rho_1/p} \leq b_1^{-2\rho_1/p} \, P_1(v_1)^{2/p}$ (see \eqref{f: decom Iwas}).
By combining these estimates, we obtain the bound 
$$
\begin{aligned}
\bignorm{\kuu\chi^{-,+}}{\Cvp{\OBN\BA}} 
& \leq C \bignorm{m_B}{\Mar{T_p}{N}}  \int_{\BA_1^+} b_1^{-2\de(p)\rho_1} \, \dtt {b_1}
	\, \int_{\OBN_1} P_1(v_1)^{2/p}	\wrt v_1 \\  
& \leq C \bignorm{m_B}{\Mar{T_p}{N}};  
\end{aligned}
$$
the last inequality follows from the fact that $1<p<2$, and that $P^{1+\vep}$ is in $\lu{\OBN_1}$
for each $\vep>0$.  

A similar argument, with the roles of $\OBN_1\BA_1$ and $\OBN_2\BA_2$ interchanged, proves the estimate
$$
\bignorm{\kuu\chi^{+,-}}{\Cvp{\OBN\BA}} 
\leq C \bignorm{m_B}{\Mar{T_p}{N}}.  
$$
Thus, it remains to analyses $\kuu \chi^{+,+}$. 
By Theorem~\ref{t: Transference principle} (with $\OBN$ in place of $N$ and $\BA$ in place of~$H$),  
\begin{equation*} 
\bignorm{\kuu\chi^{+,+}}{\Cvp{\OBN\BA}} 
\leq \int_{\OBN} \bignorm{\cD^{1/p}\,(\kuu)_v\chi^{+,+}}{\Cvp{\BA}} \wrt v.  
\end{equation*} 
Recall that $\cD$ denotes the function on $\OBN\BA$ defined by $\cD(vb) = b^{2\rho}$, and $(\kuu)_v(b) := \kuu(vb)$ 
for each $v$ in $\OBN$ and $b$ in $\BA$.  

Notice that if $b_1$ is in $\BA_1^+$ and $b_2$ is in $\BA_2^+$, then 
formula \eqref{f: decom Iwas} (used twice) yields 
$$
\begin{aligned}
(\cD^{1/p}\kuu)(vb)
& = b_1^{2\rho_1/p} b_2^{2\rho_2/p}\,  [v_1b_1]_+^{-2\rho_1/p}\, [v_2b_2]_+^{-2\rho_2/p} \, \phipuu([v_1b_1]_+,[v_2b_2]_+) \\
& = P(v_1)^{2/p} P(v_2)^{2/p}\, \cE_1(v_1,b_1)  \, \cE_2(v_2,b_2) \, \phipuu([v_1b_1]_+,[v_2b_2]_+),
\end{aligned}
$$
where $\cE_1(v_1,b_1) := \exp[E_1(v_1,b_1)H_0^{(1)}]$ and $\cE_2(v_2,b_2) := \exp[E_2(v_2,b_2)H_0^{(2)}]$. 
Also, we write 
$$
\cE_1(v_1,b_1) \, \cE_2(v_2,b_2) 
= \big[\cE_1(v_1,b_1)-1\big]  \, \big[\cE_2(v_2,b_2)-1\big] 
  + \big[\cE_1(v_1,b_1)-1\big]  
  + \big[\cE_2(v_2,b_2)-1\big] 
  +1.
$$
Correspondingly, $\cD^{1/p}\kuu\chi^{+,+}$ may be written as the sum of four terms.  The $\lu{\OBN;\lu{\BA}}$
norm of the first may be estimated as follows
$$
\begin{aligned}
& \int_{\OBN}\int_{\BA^+} P(v_1)^{2/p} P(v_2)^{2/p}\, \big[\cE_1(v_1,b_1)-1\big]  \, \big[\cE_2(v_2,b_2)-1\big] 
	 \bigmod{\phipuu([v_1b_1]_+,[v_2b_2]_+)} \wrt v_1\wrt v_2 \dtt{b_1} \dtt{b_2} \\
& \leq C  \bignorm{\phipuu}{\ly{\BA^+}} \int_{\OBN}\int_{\BA^+} P(v_1)^{2/p} P(v_2)^{2/p}\,  (b_1b_2)^{-2} 
	 \wrt v_1\wrt v_2 \dtt{b_1} \dtt{b_2} \\
& \leq C \bignorm{m}{\Mar{T_p}{N}}.   
\end{aligned}
$$
Next we consider the second term 
$\chi^{+,+}(b) \, \big[\cE_1(v_1,b_1)-1\big] \, P(v_1)^{2/p} P(v_2)^{2/p}\, \phipuu([v_1b_1]_+,[v_2b_2]_+)$,
which we write as the sum of 
$$
T_1(vb)
:= \chi^{+,+}(b) \,\big[\cE_1(v_1,b_1)-1\big] \, P(v_1)^{2/p} P(v_2)^{2/p}\, 
\big[\phipuu([v_1b_1]_+,[v_2b_2]_+)-\phipuu([v_1b_1]_+,b_2)\big]
$$ 
and 
$
T_2(vb)
:= \chi^{+,+}(b) \,\big[\cE_1(v_1,b_1)-1\big] \, P(v_1)^{2/p} P(v_2)^{2/p}\, \phipuu([v_1b_1]_+,b_2).
$  
We \emph{claim} that 
\begin{equation*} 
\bignorm{\chi_{\BA_2^+}\phipuu([v_1b_1]_+,\cdot)}{\Cvp{\BA_2}}
\leq C \bignorm{m_B}{\Mar{T_p}{N}}.  
\end{equation*}
Indeed, it is straightforward to check that we may write 
$$
\phipuu(a_1,b_2)
= [1-\Phi_2(b_2)] \, b_2^{\de(p)\rho_2} \int_{\fra_2^*} b_2^{i\la_2} \, 
	[\check\bc_2^{-1}\cU(a_1; \cdot)](\la_2) \wrt \la_2,
$$
where 
$$
\cU(a_1;\la_2)
:= [1-\Phi_1(a_1)] \, a_1^{\de(p)\rho_1} \, \int_{\fra_1^*} a_1^{i\la_1}\, 
   [\check\bc_1^{-1}m_B(\cdot,\la_2)](\la_1) \wrt \la_1.   
$$
By Lemma~\ref{l: phip basic},  
$$
\bignorm{\chi_{\BA_2^+}\phipuu(a_1,\cdot)}{\Cvp{\BA_2}} 
\leq C \bignorm{\cU(a_1;\cdot)}{\Horm{T_p^{(2)}}{N_2}}    
$$
Thus, we need to estimate the right hand side.  Notice that 
$$
\partial_{\la_2}^{j_2} \cU(a_1;\la_2)
:= [1-\Phi_1(a_1)] \, a_1^{\de(p)\rho_1} \, \int_{\fra_1^*} a_1^{i\la_1}\, 
   [\check\bc_1^{-1}\partial_{\la_2}^{j_2} m_B(\cdot,\la_2)](\la_1) \wrt \la_1.   
$$
Another application of Lemma~\ref{l: phip basic} (with $\partial_{\la_2}^{j_2} m_B(\cdot;\la_2)$ in place of $m$) gives 
$$
\begin{aligned}
\bigmod{\partial_{\la_2}^{j_2} \cU(a_1;\la_2)} 
& \leq C \bignorm{\partial_{\la_2}^{j_2} m_B(\cdot;\la_2)}{\Horm{T_p^{(1)}}{N_1}}  \, \frac{1-\Phi_1(a_1)}{\log a_1} \\
& \leq C \bignorm{m_B}{\Mar{T_p}{N}} \, \Theta_p^{(2)}(\la_2)^{-j_2} \, \frac{1-\Phi_1(a_1)}{\log a_1} \\
\end{aligned}
$$
for every $a_1$ in $\BA_1^+$.  This implies 
$\ds\bignorm{\cU(a_1;\cdot)}{\Horm{T_p^{(2)}}{N_2}} \leq C \bignorm{m_B}{\Mar{T_p}{N}} \,\frac{1-\Phi_1(a_1)}{\log a_1}$,
thereby proving the claim.  

The claim implies that 
$$
\begin{aligned}
\bignorm{T_2(v_1b_1,\cdot)}{\Cvp{\OBN_2\BA_2}} 
& \leq \bignorm{T_2(v_1b_1,\cdot)}{\lu{\OBN_2;\Cvp{\BA_2}}} \\
& \leq C \bignorm{m}{\Mar{T_p}{N}} \, \frac{\chi_{\BA_1^+}(b_1)}{b_1^2} 
	P(v_1)^{2/p} \int_{\OBN_2} P(v_2)^{2/p}\wrt v_2,
\end{aligned}
$$
whence 
$$
\begin{aligned}
\bignorm{T_2}{\Cvp{\OBN\BA}} 
& \leq \bignorm{T_2}{\lu{\OBN_1\BA_1;\Cvp{\OBN_2\BA_2}}} \\
& \leq C \bignorm{m}{\Mar{T_p}{N}} \, \int_{\BA_1^+} b_1^{-2} \wrt b_1 \int_{\OBN_1} P(v_1)^{2/p}\wrt v_1 \\
& \leq C \bignorm{m}{\Mar{T_p}{N}}.  
\end{aligned}
$$
Furthermore, 
$$
\begin{aligned}
\bigmod{\phipuu([v_1b_1]_+,[v_2b_2]_+)-\phipuu([v_1b_1]_+,b_2)}
& \leq \int_{b_2}^{[v_2b_2]_+} a_2\bigmod{\partial_{a_2}\phipuu([v_1b_1]_+,a_2)} \dtt{a_2} \\
\hbox{(by Lemma~\ref{l: multipliers glob glob}~\rmi)} \qquad
	& \leq C \bignorm{m}{\Mar{\TWp}{N}}\, \frac{1-\Phi_1([v_1b_1]_+)}{\log [v_1b_1]_+}  \, \int_{b_2}^{[v_2b_2]_+} 
	\frac{1-\Phi_2(a_2)}{\log^2 a_2} \dtt{a_2} \\ 
& \leq C \bignorm{m}{\Mar{\TWp}{N}}\,  \frac{1+\mod{\al_2(H(v_2))}}{1+ \log^2 b_2},
\end{aligned}
$$ 
where $C$ does not depend on $v$ and $b$.  Therefore 
$$
\begin{aligned}
 \bignorm{T_1}{\lu{\OBN\BA}} 
& \leq C \bignorm{m}{\Mar{T_p}{N}} \, \int_{\BA^+} 
     [1+ \log^2 b_2]^{-2} \frac{\wrt b_1}{b_1^2} \dtt{b_2} \int_{\OBN} P(v_1)^{2/p}[1+\mod{\al_2(H(v_2))}] P(v_2)^{2/p}\wrt v_1 
	\wrt v_2 \\
& \leq C \bignorm{m}{\Mar{T_p}{N}},  
\end{aligned}
$$
which obviously implies the estimate $\bignorm{T_1}{\Cvp{\OBN\BA}} \leq C \bignorm{m}{\Mar{T_p}{N}}$.  This
completes the estimate of the second term.  

The third term $\big[\cE_2(v_2,b_2)-1\big] \, P(v_1)^{2/p} P(v_2)^{2/p}\, \phipuu([v_1b_1]_+,[v_2b_2]_+)$
may be estimated much as the second term.  We omit the details. 

Finally, we consider $P(v_1)^{2/p} P(v_2)^{2/p}\, \phipuu([v_1b_1]_+,[v_2b_2]_+)$.  We write
$$
\begin{aligned}
\phipuu([v_1b_1]_+,[v_2b_2]_+) 
& = \big\{\phipuu([v_1b_1]_+,[v_2b_2]_+) - \phipuu(b_1,[v_2b_2]_+) + \phipuu([v_1b_1]_+,b_2) - \phipuu(b_1,b_2)\big\} \\
& \quad- \big\{\phipuu([v_1b_1]_+,b_2) - \phipuu(b_1,b_2)\big\} \\
& \quad	+ \big\{\phipuu(b_1,[v_2b_2]_+) - \phipuu(b_1,b_2) \big\}  \\
& \quad + \phipuu(b_1,b_2);
\end{aligned}
$$  
correspondingly, we write $P(v_1)^{2/p} P(v_2)^{2/p}\, \phipuu([v_1b_1]_+,[v_2b_2]_+)$ as the sum of the following four  
terms
$$
\begin{aligned}
\taupuuu (v_1b_1,v_2b_2)
& = \chi^{+,+}(b_1,b_2) \, P(v_1)^{2/p} \, P(v_2)^{2/p} \, 
    \big\{\phipuu([v_1b_1]_+,[v_2b_2]_+) - \phipuu(b_1,[v_2b_2]_+)\\
& \qquad + \phipuu([v_1b_1]_+,b_2) - \phipuu(b_1,b_2)\big\} \\
\taupduu (v_1b_1,v_2b_2)
& = \chi^{+,+}(b_1,b_2) \, P(v_1)^{2/p} \, P(v_2)^{2/p} \, \big\{ \phipuu(b_1,b_2) -\phipuu([v_1b_1]_+,b_2) \big\} \\
\tauptuu (v_1b_1,v_2b_2)
& = \chi^{+,+}(b_1,b_2) \, P(v_1)^{2/p} \, P(v_2)^{2/p} \, 
    \big\{\phipuu(b_1,[v_2b_2]_+) - \phipuu(b_1,b_2)\big\} \\
\taupquu (v_1b_1,v_2b_2) 
& = \chi^{+,+}(b_1,b_2) \, P(v_1)^{2/p} \, P(v_2)^{2/p} \,\phipuu(b_1,b_2). 
\end{aligned}
$$
Clearly, it suffices to prove that the convolution norm of each of the four terms above is bounded by $C \bignorm{m}{\Mar{T_p}{N}}$.
First we consider $\taupuuu$.  Observe that 
$$
\begin{aligned}
& \phipuu([v_1b_1]_+,[v_2b_2]_+) - \phipuu(b_1,[v_2b_2]_+) + \phipuu([v_1b_1]_+,b_2) - \phipuu(b_1,b_2) \\
& = \int_{b_1}^{[v_1b_1]_+}\! \int_{b_2}^{[v_2b_2]_+} \partial_{a_1a_2}^2 \phipuu(a_1,a_2) \wrt{a_1}\! \wrt{a_2}.
\end{aligned}
$$
By the estimate for $\bigmod{\partial_{a_1a_2} \phipuu(a_1,a_2)}$ proved in 
Lemma~\ref{l: multipliers glob glob}~\rmi, 
$$
\begin{aligned}
& \bignorm{\taupuuu}{\Cvp{\OBN\BA}} \\
& \leq C \bignorm{m}{\Mar{T_p}{N}} \int_{\OBN_1}\! \int_{\OBN_2} P(v_1)^{2/p} \, P(v_2)^{2/p}  \wrt v_1\! \wrt v_2 
     \int_{b_1}^{[v_1b_1]_+}\! \int_{b_2}^{[v_2b_2]_+} \frac{1-\Phi_1(a_1)}{\log^2 a_1}  \, 
     \frac{1-\Phi_2(a_2)}{\log^2 a_2} \dtt{a_1}\! \dtt{a_2}.  
\end{aligned}  
$$
Notice that 
$$
\begin{aligned}
\int_{b_1}^{[v_1b_1]_+}\! \int_{b_2}^{[v_2b_2]_+} \frac{1-\Phi_1(a_1)}{\log^2 a_1}  \, 
\frac{1-\Phi_2(a_2)}{\log^2 a_2} \dtt{a_1}\! \dtt{a_2} 
& \leq C \int_{b_1}^{[v_1b_1]_+}\! \int_{b_2}^{[v_2b_2]_+} \frac{1}{1+\log^2 a_1}  \, 
     \frac{1}{1+\log^2 a_2} \dtt{a_1}\! \dtt{a_2}\\
& \leq C \, \frac{\log([v_1b_1]_+)-\log{b_1}}{1+\log^2 b_1}\,\, \frac{\log([v_2b_2]_+)-\log{b_2}}{1+\log^2 b_2}.
\end{aligned}
$$
Now, we apply formula \eqref{f: decom Iwas} to the symmetric spaces $\BX_1$~and~$\BX_2$,
and obtain that $\log([v_1b_1]_+)-\log{b_1} \leq C\, [\mod{\al_1(H(v_1))} + 1]$ and 
$\log([v_2b_2]_+)-\log{b_2} \leq C\, [\mod{\al_2(H(v_2))} + 1]$.  Here we have used the fact
that $b_1\in \BA_1^+$ and $b_2\in \BA_2^+$.   By combining these estimates, we see that 
$$
\begin{aligned}
\bignorm{\taupuuu}{\Cvp{\OBN\BA}}
& \leq C \bignorm{m}{\Mar{T_p}{N}} \int_{\OBN_1}\! [\mod{\al_1(H(v_1))} + 1]\, P(v_1)^{2/p} \wrt v_1 
   \int_{\OBN_2} [\mod{\al_2(H(v_2))} + 1] \,  P(v_2)^{2/p} \wrt v_2 \\ 
& \leq C \bignorm{m}{\Mar{T_p}{N}}.  
\end{aligned}
$$

Next we consider $\taupduu$.  By Lemma~\ref{l: multipliers glob glob}~\rmiv, 
$$
\bignorm{\taupduu(v_1b_1,v_2\cdot)}{\Cvp{\BA_2}} 
\leq C  \bignorm{m_B}{\Mar{\TWp}{N}}\, \chi_{\BA_1^+}(b_1)\, P(v_1)^{2/p} P(v_2)^{2/p}  \frac{\mod{\al_1(H(v_1))}+1}{1+\log^2b_1}.
$$
Hence
$$
\begin{aligned}
& \bignorm{\taupduu(v_1b_1,v_2\cdot)}{\Cvp{\BX}} \\ 
& \leq \int_{\OBN} \int_{\BA_1} \bignorm{\taupduu(v_1b_1,v_2\cdot)}{\Cvp{\BX}} \dtt{b_1} \wrt v_1\wrt v_2\\
& \leq C  \bignorm{m_B}{\Mar{\TWp}{N}}\, \int_{\BA_1^+}\frac{\wrt b_1}{1+\log^2b_1} \, 
	\int_{\OBN_1}P(v_1)^{2/p} \wrt v_1\int_{\OBN_2}[\mod{\al_1(H(v_1))}+1] \, P(v_2)^{2/p} \wrt v_2 \\
& \leq C  \bignorm{m_B}{\Mar{\TWp}{N}},
\end{aligned}
$$
as required. 

By Lemma~\ref{l: multipliers glob glob}~\rmi, $\phipuu\chi^{+,+}$ is in $\Cvp{\BA}$, 
with norm controlled by $C \bignorm{m}{\Mar{T_p}{N}}$.  Thus, by Theorem~\ref{t: Transference principle},  
$$
\begin{aligned}
\bignorm{\taupquu}{\Cvp{\OBN\BA}} 
& \leq \int_{\OBN} \bignorm{(\taupquu)_v}{\Cvp{\BA}} \wrt v \\
& \leq C \bignorm{m}{\Mar{T_p}{N}} \int_{\OBN_1} \int_{\OBN_2} P(v_1)^{2/p} \, P(v_2)^{2/p}  \wrt v_1 \wrt v_2 \\
& \leq C \bignorm{m}{\Mar{T_p}{N}}.  
\end{aligned}
$$
This concludes the proof of \rmiii, and of the proposition.  
\end{proof}

\subsection*{Acknowledgments}
Both authors were supported by the Italian PRIN 2011 and PRIN 2015 projects 
\emph{Real and complex manifolds: geometry, topology and harmonic analysis}.
The research of the second author was  also supported by National Science Centre, Poland (NCN), research project 2018\slash 31\slash B\slash ST1\slash 00204.
The research was initiated while the second author was an Assegnista di ricerca at the Universit\`a
di Milano-Bicocca and continued when he was an HCM Postdoc at Universit\"{a}t Bonn.

\end{document}